\DeclareMathOperator{\Ran}{Ran}
\DeclareMathOperator{\Ker}{Ker}
\DeclareMathOperator{\supp}{supp}
\DeclareMathOperator{\Span}{span}
\renewcommand{\Re}{\operatorname{Re}}
\renewcommand{\Im}{\operatorname{Im}}
\newcommand{\abs}[1]{\lvert#1\rvert}
\newcommand{\norm}[1]{\lVert#1\rVert}
\newcommand{\jap}[1]{\langle#1\rangle}
\newcommand{\Jap}[1]{\left\langle#1\right\rangle}
\newcommand{\bbR}{{\mathbb R}}
\newcommand{\bbC}{{\mathbb C}}
\newcommand{\bbN}{{\mathbb N}}
\newcommand{\bbD}{{\mathbb D}}
\newcommand{\bbP}{{\mathbb P}}
\newcommand{\bP}{\mathbf{P}}
\newcommand{\bJ}{\mathbf{J}}
\newcommand{\bbJ}{\mathbb{J}}
\newcommand{\calH}{{\mathcal H}}
\newcommand{\calP}{\mathcal{P}}
\numberwithin{equation}{section}
\theoremstyle{plain}
\newtheorem{theorem}{\bf Theorem}[section]
\newtheorem*{theorem*}{Theorem 1.1$'$}
\newtheorem{lemma}[theorem]{\bf Lemma}
\newtheorem*{lemma*}{\bf Lemma}
\newtheorem{proposition}[theorem]{\bf Proposition}
\theoremstyle{definition}
\theoremstyle{remark}
\newtheorem*{remark*}{\bf Remark}
\newtheorem{remark}[theorem]{\bf Remark}
\newcommand{\ii}{\mathrm{i}} 
\newcommand{\dd}{\mathrm{d}} 
\newcommand{\meas}{{\mathbf M}}
\begin{document}

\title[Inverse spectral problem for non-self-adjoint Jacobi matrices]{An inverse spectral problem for non-self-adjoint Jacobi matrices}

\author{Alexander Pushnitski}
\address{Department of Mathematics, King's College London, Strand, London, WC2R~2LS, U.K.}
\email{alexander.pushnitski@kcl.ac.uk}

\author{Franti\v{s}ek \v{S}tampach}
\address{Department of Mathematics, Faculty of Nuclear Sciences and Physical Engineering, 
Czech Technical University in Prague, Trojanova 13, 12000 Prague~2, Czech Republic.}
\email{frantisek.stampach@cvut.cz}

\subjclass[2020]{47B36}

\keywords{complex Jacobi matrix, spectral measure, inverse spectral problem}

\date{\today}

\begin{abstract}
We consider the class of bounded symmetric Jacobi matrices $J$ with positive off-diagonal elements and complex diagonal elements. With each matrix $J$ from this class, we associate the spectral data, which consists of a pair $(\nu,\psi)$. Here $\nu$ is the spectral measure of $|J|=\sqrt{J^*J}$ and  $\psi$ is a \emph{phase function} on the real line satisfying $|\psi|\leq1$ almost everywhere with respect to the measure $\nu$. Our main result is that the map from $J$ to the pair $(\nu,\psi)$ is a bijection between our class of Jacobi matrices and the set of all spectral data. 
\end{abstract}

\maketitle

\section{Overview and Background} 

\subsection{Overview}
A \emph{Jacobi matrix} is an infinite tri-diagonal matrix of the form
\begin{equation}
J=\begin{pmatrix}
b_0 & a_0 & 0 & 0 & 0 &\cdots\\
a_0 & b_1 & a_1 & 0 & 0 & \cdots\\
0 & a_1 & b_2 & a_2 & 0 &\cdots\\
0 & 0 & a_2 & b_3 & a_3 & \cdots\\
\vdots&\vdots&\vdots&\vdots&\vdots&\ddots
\end{pmatrix},
\label{a.jacobi}
\end{equation}
where $a_j\not=0$ and $b_j$ are  real or complex numbers, known as the \emph{Jacobi parameters}. Throughout the paper, we assume that $a_j$ and $b_j$ are bounded:
\begin{equation}
\sup_{j\geq0}(\abs{a_j}+\abs{b_j})<\infty;
\label{a1}
\end{equation}
this is equivalent to the boundedness of $J$ as a linear operator on the Hilbert space $\ell^2(\bbN_0)$, $\bbN_0=\{0,1,2,\dots\}$. 

The vast majority of the literature on Jacobi matrices is devoted to the case of self-adjoint $J$: 
\begin{equation}
a_j>0, \quad b_j\in\bbR, \quad \forall j\geq0.
\label{a.sa}
\end{equation}
Here $a_j\not=0$ is essential, while $a_j>0$ is a normalisation condition and can be replaced, for example, by $a_j<0$ for all $j$ or by $a_j$ having any prescribed sequence of signs (see Remark~\ref{rmk.a3} below). Let $\mu$ be the spectral measure of $J$ corresponding to the first vector $\delta_0=(1,0,0,\dots)$ of the standard basis in $\ell^2(\bbN_0)$ (see \eqref{eq.spmeasure} below). 
The inverse spectral problem for bounded self-adjoint Jacobi matrices can be succinctly stated as follows: \emph{there is a bijection between all bounded Jacobi matrices $J$ satisfying \eqref{a.sa} and all non-degenerate probability measures  $\mu$ with bounded support.} Here \emph{non-degenerate} means having infinite support, i.e. not reducing to a finite linear combination of point masses. We recall this bijection and its context in Section~\ref{sec.sa} below. 

The main result of this paper is an extension of this inverse spectral problem to the case of bounded non-selfadjoint Jacobi matrices satisfying 
\begin{align}
a_j>0, \quad b_j\in\bbC, \quad \forall j\geq0.
\label{a2}
\end{align}
Here again $a_j>0$ can be replaced by $a_j$ having any prescribed sequence of complex arguments (see Remark~\ref{rmk.arg}).
Denote $\abs{J}=\sqrt{J^*J}$ and let $\nu$ be the spectral measure of $\abs{J}$ corresponding to the vector $\delta_0$. We prove that there is a bijection between all bounded Jacobi matrices $J$ satisfying \eqref{a2} and all pairs $(\nu,\psi)$, where $\nu$ is a non-degenerate probability measure on $[0,\infty)$ and $\psi\in L^\infty(\nu)$ is a certain auxiliary function, which we call a \emph{phase function}, satisfying the constraint $\abs{\psi(s)}\leq 1$ for $\nu$-a.e. $s>0$. 
In Section~\ref{sec.main} we introduce the \emph{spectral data} $(\nu,\psi)$ and state precisely our main result and its consequences. Proofs are given in Sections~\ref{sec.b}--\ref{sec.e}. In Section~\ref{sec.example} we consider an example.

\subsection{Bounded self-adjoint Jacobi matrices}\label{sec.sa}
To set the scene, here we recall classical results on direct and inverse spectral problem for bounded \emph{self-adjoint} Jacobi matrices. For the details, we refer e.g. to \cite{akh_21} or to \cite{ges-sim_jam97} and the literature cited therein.

Under the assumptions \eqref{a1} and \eqref{a.sa}, $J$ is a bounded self-adjoint operator on $\ell^2(\bbN_0)$. The spectral theory of $J$ is intimately connected to the theory of orthogonal polynomials.
The usual approach in the theory of orthogonal polynomials is to fix a finite measure $\mu$ on the real line and to apply the Gram-Schmidt procedure to the sequence of powers $1,x,x^2,\dots$ with respect to the metric of $L^2(\mu)$. This produces the sequence of normalised orthogonal polynomials $p_{j}$.  The Jacobi matrix $J$ appears as the matrix of coefficients in the three-term recurrence relation
\begin{equation}
\begin{aligned}
 b_{0}p_{0}(x)+a_{0}p_{1}(x)&=xp_{0}(x),\\
 a_{j-1}p_{j-1}(x)+b_{j}p_{j}(x)+a_{j}p_{j+1}(x)&=xp_{j}(x), \quad j\geq1,
\end{aligned}
\label{a.ogpol}
\end{equation}
with $p_{0}=1$. This means that one starts with $\mu$ and determines $J$, i.e. $\mu\mapsto J$. In order to motivate what comes next, we need to take a slightly different view on the subject, starting from $J$ as an operator on $\ell^2(\bbN_0)$ and then coming to $\mu$ as the spectral measure of $J$, i.e. $J\mapsto \mu$. Below we recall the key statements of the theory presented in this light. 

We denote by $\delta_j$, $j\geq0$, the vectors of the standard basis in $\ell^2(\bbN_0)$ and 
$\jap{\cdot,\cdot}$ the standard inner product in $\ell^2(\bbN_0)$ linear in the first argument and anti-linear in the second one. The \emph{spectral measure} $\mu$ of a bounded self-adjoint Jacobi matrix $J$ is the probability measure 
\begin{equation}
 \mu(\Delta)=\jap{\chi_{\Delta}(J)\delta_{0},\delta_{0}},
 \label{eq.spmeasure}
\end{equation}
where $\Delta\subset\bbR$ is a Borel set and $\chi_{\Delta}$ the indicator function of $\Delta$.
Since $\mu$ is compactly supported in $\bbR$ (on the spectrum of $J$), it is uniquely determined by its moments
\begin{equation}
\jap{J^m\delta_0,\delta_0}=\int_{-\infty}^\infty s^m\dd\mu(s), \quad m\geq0.
\label{a.moments}
\end{equation}

Recall that a vector $w$ is said to be a \emph{cyclic vector} for a bounded self-adjoint operator $T$ on a~Hilbert space if the set of finite linear combinations of $T^{m}w$, $m\geq0$, is dense. If a cyclic vector for $T$ exists, the spectrum of $T$ is said to be \emph{simple}.
The \emph{direct spectral problem} for self-adjoint Jacobi matrices can be stated as follows:

\begin{theorem}[Direct spectral problem, self-adjoint case]\label{thm.0a}
Let $J$ be a bounded self-adjoint Jacobi matrix. Then $\delta_0$ is a cyclic vector for $J$. Consequently, $J$ is unitarily equivalent to the operator of multiplication by the independent variable in $L^2(\mu)$. 
\end{theorem}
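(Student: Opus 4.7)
The plan is to reduce Theorem~\ref{thm.0a} to the cyclicity of $\delta_0$; once cyclicity is established, the unitary equivalence with multiplication by the independent variable follows from the standard functional calculus.

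First I would prove cyclicity by exploiting the tri-diagonal form inductively. The relation $J\delta_0=b_0\delta_0+a_0\delta_1$ with $a_0>0$ expresses $\delta_1$ as a linear combination of $\delta_0$ and $J\delta_0$. Inductively, suppose $\delta_0,\delta_1,\dots,\delta_j$ all lie in $\calP_j:=\Span\{\delta_0,J\delta_0,\dots,J^j\delta_0\}$; then $J\delta_j\in\calP_{j+1}$, and the identity $J\delta_j=a_{j-1}\delta_{j-1}+b_j\delta_j+a_j\delta_{j+1}$ together with $a_j>0$ yields $\delta_{j+1}\in\calP_{j+1}$. Thus every basis vector $\delta_j$ lies in the algebraic span of $\{J^m\delta_0\}_{m\geq 0}$, and this span is dense in $\ell^2(\bbN_0)$.

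Next I would construct the unitary $U:L^2(\mu)\to\ell^2(\bbN_0)$ as follows. On polynomials, set $Up:=p(J)\delta_0$. By self-adjointness of $J$, the functional calculus, and \eqref{eq.spmeasure},
\begin{equation*}
\jap{p(J)\delta_0,q(J)\delta_0}=\jap{(\overline{q}p)(J)\delta_0,\delta_0}=\int_{-\infty}^\infty \overline{q(s)}\,p(s)\,\dd\mu(s),
\end{equation*}
so $U$ is isometric from the polynomials in $L^2(\mu)$ into $\ell^2(\bbN_0)$. Since $\mu$ has compact support, the Stone--Weierstrass theorem yields density of polynomials in $L^2(\mu)$, while the cyclicity statement above yields density of $\Ran U$. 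Therefore $U$ extends to a unitary, and the intertwining $JU=UM_x$, where $M_x$ denotes multiplication by the independent variable, is immediate on polynomials from $J\cdot p(J)\delta_0=(xp)(J)\delta_0$ and extends by continuity.

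There is no serious obstacle here: the tri-diagonal form delivers cyclicity essentially for free from the non-vanishing of the off-diagonal entries, and the remainder is a direct application of the spectral theorem for a bounded self-adjoint operator with a cyclic vector. The only minor bookkeeping item is the density of polynomials in $L^2(\mu)$, which is a standard consequence of compactness of $\supp\mu\subset\bbR$.
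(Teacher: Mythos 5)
Your proposal is correct and follows essentially the same route as the paper: the inductive argument showing $\delta_{j+1}\in\Span\{\delta_0,J\delta_0,\dots,J^{j+1}\delta_0\}$ is precisely the paper's observation that $p_j(J)\delta_0=\delta_j$, and your explicit construction of the unitary $U:p\mapsto p(J)\delta_0$ is just the standard proof of the cyclic-vector form of the spectral theorem, which the paper cites to Reed--Simon rather than writing out. All the details you supply (the isometry computation, density of polynomials in $L^2(\mu)$ via compact support, and the intertwining with $M_x$) are accurate.
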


The fact that $\delta_0$ is cyclic for $J$ is readily seen from the identity 
$p_{j}(J)\delta_{0}=\delta_{j}$, $j\geq0$, which follows from~\eqref{a.ogpol} by induction. The second claim of Theorem~\ref{thm.0a} is nothing but a particular case of the spectral theorem, see e.g.~\cite[Section~VII.2, Lemma~1]{RSI}. In the theory of orthogonal polynomials, this claim is sometimes referred to as the spectral theorem for orthogonal polynomials or Favard's theorem, see~\cite[Thm.~2.5.2]{ism_09} or~\cite[Theorem~1.3.7]{sim_11}.
The identity $p_{j}(J)\delta_{0}=\delta_{j}$ also readily implies that the polynomials $p_j$ defined by \eqref{a.ogpol} with $p_0=1$, satisfy the orthogonality relation
\begin{equation}
\int_{-\infty}^\infty p_j(s)p_k(s)\dd\mu(s)=\delta_{j,k}, \quad j,k\geq0.
\label{a.orth}
\end{equation}

It turns out that the image of the mapping $J\mapsto\mu$ consists of all non-degenerate compactly supported probability measures $\mu$ on $\bbR$. Moreover, this mapping is also injective. Both properties can be rephrased as the \emph{inverse spectral problem} for $J$ in the following form:

\begin{theorem}[Inverse spectral problem, self-adjoint case]\label{thm.0b}
$\, $ 
\begin{enumerate}[\rm (i)]
\item
Uniqueness: A bounded self-adjoint Jacobi matrix $J$ (satisfying $a_j>0$ for all $j\geq0$) is uniquely determined by its spectral measure $\mu$. 
\item
Surjectivity: For any non-degenerate (i.e. not reducing to a sum of finitely many atoms) probability measure $\mu$ with a bounded support, there is a bounded self-adjoint Jacobi matrix $J$ such that $\mu$ is the spectral measure of $J$. 
\end{enumerate}
\end{theorem}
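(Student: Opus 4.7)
The plan is to establish both parts via Gram-Schmidt orthogonalization of the monomials in $L^{2}(\mu)$. For surjectivity, given a non-degenerate probability measure $\mu$ with bounded support, the monomials $1, s, s^{2}, \ldots$ are linearly independent in $L^{2}(\mu)$: non-degeneracy is exactly what is needed here, since a measure supported on finitely many atoms would yield only finitely many independent powers. Applying Gram-Schmidt and normalising so that each polynomial $p_{j}$ has positive leading coefficient produces a \emph{unique} orthonormal sequence with $\deg p_{j} = j$.

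Next I would derive the three-term recurrence. Since $sp_{j}(s)$ has degree $j+1$, it expands as $sp_{j} = \sum_{k=0}^{j+1} c_{j,k} p_{k}$ with $c_{j,k} = \int s\, p_{j}(s) p_{k}(s) \dd\mu(s)$. For $k \leq j-2$, the product $sp_{k}$ has degree at most $j-1$ and is therefore orthogonal to $p_{j}$, forcing $c_{j,k} = 0$. Setting $a_{j} := c_{j,j+1}$ and $b_{j} := c_{j,j}$ reproduces \eqref{a.ogpol}. Comparing leading coefficients yields $a_{j} = \alpha_{j}/\alpha_{j+1} > 0$, where $\alpha_{j} > 0$ is the leading coefficient of $p_{j}$, while reality of $p_{j}$ and $\mu$ forces $b_{j} \in \bbR$. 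Boundedness $|a_{j}|, |b_{j}| \leq \max\{|s| : s \in \supp\mu\}$ follows because these are matrix entries of the bounded self-adjoint operator of multiplication by $s$ on $L^{2}(\mu)$ in the orthonormal basis $\{p_{j}\}$. The unitary $U : L^{2}(\mu) \to \ell^{2}(\bbN_{0})$ defined by $p_{j} \mapsto \delta_{j}$ then conjugates multiplication by $s$ into the Jacobi matrix $J$ built from $(a_{j}, b_{j})$, and sends the constant function $1$ to $\delta_{0}$. Hence $\jap{J^{m}\delta_{0}, \delta_{0}} = \int s^{m} \dd\mu(s)$ for every $m \geq 0$; by \eqref{a.moments} and the moment-determinacy of compactly supported measures, $\mu$ is indeed the spectral measure of $J$.

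For uniqueness, let $J$ satisfy the hypotheses with spectral measure $\mu$. By Theorem~\ref{thm.0a}, $J$ is unitarily equivalent to multiplication by $s$ on $L^{2}(\mu)$ via a unitary sending $\delta_{0}$ to the constant $1$. Under this equivalence, the identity $p_{j}(J)\delta_{0} = \delta_{j}$ noted after Theorem~\ref{thm.0a} shows that $\delta_{j}$ is carried to the polynomial $p_{j}(s)$ from \eqref{a.ogpol}; the positivity of $a_{j}$ together with the recurrence forces $p_{j}$ to have exact degree $j$ and positive leading coefficient. Orthonormality of $\{\delta_{j}\}$ in $\ell^{2}(\bbN_{0})$ translates into orthonormality of $\{p_{j}\}$ in $L^{2}(\mu)$, so these are precisely the Gram-Schmidt polynomials of $\mu$ with the positive-leading-coefficient convention, and thus uniquely determined by $\mu$. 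The formulae $a_{j} = \int s\, p_{j}(s) p_{j+1}(s) \dd\mu(s)$ and $b_{j} = \int s\, p_{j}(s)^{2} \dd\mu(s)$ then recover the Jacobi parameters from $\mu$ alone.

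The main obstacle, I expect, is not a single hard step but rather careful bookkeeping: one must track the sign convention (positive leading coefficients $\leftrightarrow$ $a_{j} > 0$) consistently in both directions, use non-degeneracy at the right place to guarantee that $p_{j+1}$ exists (equivalently, $a_{j} \neq 0$), and invoke moment-determinacy of compactly supported measures to close the surjectivity half.
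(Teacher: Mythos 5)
Your proposal is correct; it is the standard classical argument. Note, however, that the paper does not actually prove Theorem~\ref{thm.0b} — it is recalled as background and the proof is delegated to the cited references (Gesztesy--Simon and Akhiezer), so there is no internal proof to compare against. Your route (Gram--Schmidt on the monomials, the three-term recurrence from degree counting, positivity of $a_j$ from the ratio of leading coefficients, and moment-determinacy of compactly supported measures to close the surjectivity half) is exactly the textbook treatment those references give. The only step you leave implicit is that the polynomials are dense in $L^2(\mu)$ for compactly supported $\mu$ (Weierstrass approximation); this is what makes your map $U: p_j\mapsto\delta_j$ a unitary \emph{onto} $\ell^2(\bbN_0)$ rather than merely an isometry of the closed span of polynomials, and correspondingly what makes $\delta_0$ cyclic in the uniqueness half. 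With that one sentence added, the argument is complete.
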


In other words, there is a one-to-one correspondence $J\leftrightarrow \mu$ between the sets of operators and measures specified in Theorem~\ref{thm.0b}. See e.g.~\cite[Theorems~3.5 and A.~6]{ges-sim_jam97} for the proof and~\cite{akh_21} for the comprehensive theory behind it.

Moreover, the map $J\mapsto\mu$ is easily seen to be a homeomorphism with respect to appropriate topologies. A sequence of bounded self-adjoint Jacobi matrices $J_{N}$ converges strongly to a bounded self-adjoint Jacobi matrix $J$ if and only if the spectral measures $\mu_{N}$ are supported in a compact set (independent of $N$) and converge weakly to the spectral measure $\mu$: 
$$
 \int_{-\infty}^{\infty}f(s)\dd\mu_{N}(s) \to \int_{-\infty}^{\infty}f(s)\dd\mu(s), \quad \forall f\in C(\bbR),
$$
as $N\to\infty$.

\begin{remark}\label{rmk.a3}
Condition $a_j>0$ for all $j\geq0$ in \eqref{a.sa} is merely a normalisation condition.  The whole theory can be developed for the class of Jacobi matrices $J$ with arbitrary $a_{j}\neq0$ and a fixed sequence of signs of $a_j$. In fact, the procedure of reconstruction of $J$ from $\mu$ produces the sequences $b_j$, $a_j^2$, and after this the signs of $a_j$ can be prescribed in an arbitrary way. For example, taking $m=1,2$ in \eqref{a.moments}, we find
$$
\int_{-\infty}^\infty s \dd\mu(s)=
b_0 \quad\mbox{ and }\quad
\int_{-\infty}^\infty s^2 \dd\mu(s)=b_0^2+a_0^2,
$$
which determines $b_0$ and $a_0^2$. 
\end{remark}

\section{Main results}\label{sec.main}

\subsection{Bounded non-self-adjoint Jacobi matrices}

The purpose of this paper is to prove suitable analogues of Theorems~\ref{thm.0a} and \ref{thm.0b} for a class of bounded Jacobi matrices with \emph{complex} Jacobi parameters. In the rest of the paper, the matrix $J$ is given by \eqref{a.jacobi}, where $a_j$ and $b_j$ are bounded and satisfy \eqref{a2}. 
Assumption $a_j>0$ is the simplest normalisation condition on the arguments of $a_j$ and can be replaced by requiring that $a_j\not=0$ have any prescribed sequence of arguments, see Remark~\ref{rmk.arg} below. 

Crucially for our construction, $J$ is symmetric with respect to transposition. 
In other words, with $C$ denoting the usual complex conjugation $C: x\mapsto\overline{x}$ on $\ell^2(\bbN_0)$, we have  $CJ=J^*C$.
Such matrices are called \emph{$C$-symmetric}, see e.g. \cite{gar-put_tams06, gar-put_tams07}; we will say more on the $C$-symmetry structure below.

\subsection{The spectral measure $\nu$ and the direct spectral problem}
Let us consider the positive semi-definite operators $J^*J$ and $JJ^*$. The operators $J$ are in general not normal, i.e. $J^*J\not=JJ^*$. 
(It is easy to see that $J$ is normal if and only if $\Im b_j$ is a constant independent of $j$.)
As is standard, we write 
$$
\abs{J}=\sqrt{J^*J}\quad\text{ and }\quad \abs{J^*}=\sqrt{JJ^*};
$$ 
the square roots are taken in the sense of the usual functional calculus for self-adjoint operators. 
Consider the measure
\begin{equation}
\nu(\Delta)=\jap{\chi_\Delta(\abs{J})\delta_0,\delta_0},
\label{eq:def_nu}
\end{equation}
i.e. the spectral measure of the bounded self-adjoint operator $\abs{J}$ corresponding to the element $\delta_0$. 
Using the fact that complex conjugation $C$ effects an anti-unitary equivalence between $\abs{J}$ and $\abs{J^*}$ and leaves $\delta_0$ invariant, we see that $\nu$ can be alternatively written as
$$
\nu(\Delta)=\jap{\chi_\Delta(\abs{J^*})\delta_0,\delta_0}.
$$
Note that the measure $\nu$ is normalised by $\nu(\bbR)=1$. The measure $\nu$ is uniquely defined by its even moments
\begin{equation}
\jap{(J^*J)^n\delta_0,\delta_0}=\int_0^\infty s^{2n}\dd\nu(s),\quad n\geq0.
\label{a13a}
\end{equation}

Our first (simple) preliminary result is an analogue of Theorem~\ref{thm.0a}, i.e. the direct spectral problem for $J$. In order to state it, we recall the notions of multiplicity of spectrum and maximality. A bounded self-adjoint operator $T$ is said to have the \emph{spectrum of multiplicity} $\leq m$, if $T$ is unitarily equivalent to an orthogonal sum of no more than $m$ operators with simple spectrum. 
A vector $w$ is said to be an \emph{element of maximal type} with respect to a bounded self-adjoint operator $T$, if for any Borel set $\Delta\subset\bbR$ we have the implication 
$$
\chi_\Delta(T)w=0\quad\Rightarrow\quad \chi_\Delta(T)=0.
$$
If $T$ has a pure point spectrum, then the maximality of $w$ means that the projection of $w$ onto any eigenspace of $T$ is non-zero. 

\begin{theorem}[Direct spectral problem, non-self-adjoint case]\label{thm.a1}
For any bounded Jacobi matrix $J$ satisfying \eqref{a2}, the spectrum of $\abs{J}$ has multiplicity $\leq2$ and $\delta_0$ is an element of maximal type with respect to $\abs{J}$. In particular, if the spectrum of $\abs{J}$ is simple, then $\delta_0$ is a cyclic element for $\abs{J}$. 
\end{theorem}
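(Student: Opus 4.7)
The theorem contains three assertions: the spectrum of $|J|$ has multiplicity at most $2$, $\delta_0$ is of maximal type for $|J|$, and, as a consequence, $\delta_0$ is cyclic when the spectrum of $|J|$ is simple. The first two can be transferred from $|J|$ to $T:=J^*J$ using $\chi_\Delta(|J|)=\chi_{\Delta^2}(T)$. Two preliminary identities will drive the arguments. First, the $C$-symmetry $CJ=J^*C$ gives $CTC=JJ^*=:T'$; because $\chi_\Delta$ is real-valued and $C$ is anti-unitary, $C\chi_\Delta(T)C=\chi_\Delta(T')$. Second, the obvious operator identity $TJ^*=J^*T'$ extends by polynomial approximation on the compact spectrum to a bounded Borel intertwining $f(T)J^*=J^*f(T')$, in particular $\chi_\Delta(T)J^*=J^*\chi_\Delta(T')$.

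For the multiplicity bound I would compute the pentadiagonal matrix $T=J^*J$ and observe that $T_{j+2,j}=a_j a_{j+1}>0$ for every $j\geq0$. Starting from $\delta_0,\delta_1$, the identity
\[
a_k a_{k+1}\delta_{k+2}=T\delta_k-\sum_{i=k-2}^{k+1}T_{i,k}\delta_i
\]
shows by induction that each $\delta_{k+2}$ lies in the closed linear span of $\{T^n\delta_0,T^n\delta_1:n\geq0\}$. Thus $\{\delta_0,\delta_1\}$ is a cyclic set of cardinality two for $T$ and hence for $|J|$, which bounds the multiplicity by $2$.

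For maximality, let $\Delta\subset[0,\infty)$ be Borel and set $P:=\chi_{\Delta^2}(T)$, $P':=\chi_{\Delta^2}(T')$. Suppose $P\delta_0=0$. By $C$-symmetry and $C\delta_0=\delta_0$, also $P'\delta_0=CPC\delta_0=CP\delta_0=0$. I would proceed by induction on $k\geq0$: assuming $P\delta_j=P'\delta_j=0$ for all $j\leq k$, apply $P$ to the recurrence
\[
a_k\delta_{k+1}=J^*\delta_k-\overline{b_k}\delta_k-a_{k-1}\delta_{k-1}\qquad(a_{-1}:=0),
\]
which comes from $J^*\delta_k=a_{k-1}\delta_{k-1}+\overline{b_k}\delta_k+a_k\delta_{k+1}$ (using $a_j>0$). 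The last two terms on the right vanish by the inductive hypothesis, and the intertwining relation rewrites the first as $PJ^*\delta_k=J^*P'\delta_k=0$. Since $a_k>0$ we obtain $P\delta_{k+1}=0$; applying $C$ then gives $P'\delta_{k+1}=0$, closing the induction. The totality of $\{\delta_j\}$ in $\ell^2(\bbN_0)$ yields $P=0$, i.e.\ $\chi_\Delta(|J|)=0$, which is precisely the maximality of $\delta_0$.

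The cyclicity statement in the simple-spectrum case is a standard fact of spectral theory: in the multiplication model where $|J|$ acts on $L^2(d\rho)$, both ``maximal type'' and ``cyclic'' translate to ``the representing function of $\delta_0$ is nonzero $\rho$-almost everywhere''. The delicate point of the proof is the induction step for maximality, where $C$-symmetry and the intertwining must be combined in the correct order: $C$-symmetry upgrades $P\delta_k=0$ to $P'\delta_k=0$, and only then does $PJ^*=J^*P'$ allow one to move the projection across $J^*$, ensuring that it acts on $\delta_k$ rather than on $\delta_{k+1}$, which would otherwise make the argument tautological.
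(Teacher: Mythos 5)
Your proposal is correct, and while the multiplicity bound is obtained essentially as in the paper (the pentadiagonal structure of $J^*J$ with $[J^*J]_{j+2,j}=a_ja_{j+1}\neq0$ shows that $\{\delta_0,\delta_1\}$ generates $\ell^2(\bbN_0)$ under $J^*J$), your maximality argument is genuinely different. The paper first invokes the general existence of an element $w$ of maximal type for $\abs{J}$ (citing Birman--Solomjak), sets $v=\chi_\Delta(\abs{J})w$, and shows via the intertwining relation that $v$ is orthogonal to the two-generator span $\Span_{\abs{J}}(\delta_0,J^*\delta_0)=\ell^2(\bbN_0)$, whence $v=0$ and maximality of $w$ finishes the job. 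You instead run a direct induction on the basis: assuming $P\delta_j=P'\delta_j=0$ for $j\leq k$ (with $P=\chi_\Delta(\abs{J})$, $P'=\chi_\Delta(\abs{J^*})$), you solve the recurrence $J^*\delta_k=a_{k-1}\delta_{k-1}+\overline{b_k}\delta_k+a_k\delta_{k+1}$ for $\delta_{k+1}$, move $P$ across $J^*$ via $PJ^*=J^*P'$, and use $C$-symmetry to propagate the conclusion from $P$ to $P'$; totality of $\{\delta_j\}$ then gives $P=0$ outright. Your route is more self-contained, since it needs neither the abstract existence of maximal-type vectors nor a separately stated span lemma for the maximality step, at the modest cost of tracking the two projections $P$ and $P'$ simultaneously through the induction; the paper's route reuses its span lemma (needed anyway for the multiplicity bound) and so factors the argument through a single piece of linear algebra, at the cost of an external citation. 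Your observation about the order of operations --- upgrade $P\delta_k=0$ to $P'\delta_k=0$ by conjugation \emph{before} applying the intertwining --- is exactly the point where the $C$-symmetry enters, and it mirrors the step in the paper's proof where $\chi_\Delta(\abs{J^*})\delta_0=\overline{\chi_\Delta(\abs{J})\delta_0}$ is used. All remaining ingredients (the Borel-functional-calculus extension of $T^nJ^*=J^*(T')^n$, and the equivalence of maximal type and cyclicity for simple spectrum) are handled the same way in both arguments.
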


The proof is given in Section~\ref{sec.b}.

\subsection{The phase function: definition} 

\begin{theorem}[Definition of $\psi$]\label{thm.a2}
For any bounded Jacobi matrix $J$ satisfying \eqref{a2}, there exists a unique function $\psi\in L^\infty(\nu)$ with $\psi(0)=1$ such that $\abs{\psi(s)}\leq 1$ for $\nu$-a.e. $s\geq0$ and 
\begin{equation}
\jap{Jf(\abs{J})\delta_0,\delta_0}
=
\int_0^\infty sf(s)\psi(s)\dd\nu(s), 
\quad
\forall f\in C(\bbR). 
\label{a10}
\end{equation}
\end{theorem}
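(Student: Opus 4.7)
The plan is to use the polar decomposition $J=V\abs{J}$, where $V$ is the partial isometry with $\Ker V=\Ker\abs{J}$, and to obtain $\psi$ as the Radon--Nikodym derivative of a natural complex auxiliary measure with respect to $\nu$. Let $E(\cdot)$ denote the projection-valued spectral measure of $\abs{J}$, so that $\nu(\Delta)=\norm{E(\Delta)\delta_0}^2$, and define the complex Borel measure on $[0,\infty)$ by
\[
\rho(\Delta)=\jap{VE(\Delta)\delta_0,\delta_0}.
\]
Then for any $f\in C(\bbR)$,
\[
\jap{Jf(\abs{J})\delta_0,\delta_0}
=\jap{V\abs{J}f(\abs{J})\delta_0,\delta_0}
=\int_0^\infty s f(s)\,\dd\rho(s),
\]
so \eqref{a10} will follow once I show that $\rho\ll\nu$ with Radon--Nikodym derivative of modulus at most $1$. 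I will then take $\psi$ to be that density on $(0,\infty)$ and set $\psi(0):=1$ by convention; the value at $0$ does not affect \eqref{a10} because of the factor $s$ in the integrand.

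The main obstacle is to upgrade the easy $L^2$-type estimate $\abs{\rho(\Delta)}\leq\sqrt{\nu(\Delta)}$, which would only place $\psi$ in $L^2(\nu)$, to the sharp bound $\abs{\rho(\Delta)}\leq\nu(\Delta)$. Here the $C$-symmetry is decisive. From $CJ=J^*C$ and $C\delta_0=\delta_0$ one obtains $\abs{J^*}=C\abs{J}C$, and hence $\delta_0$ has the same spectral measure $\nu$ with respect to both $\abs{J}$ and $\abs{J^*}$; writing $E^*(\cdot)$ for the spectral measure of $\abs{J^*}$, this means $\norm{E^*(\Delta)\delta_0}^2=\nu(\Delta)$. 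The two polar identities $J=V\abs{J}=\abs{J^*}V$ give the intertwining $V\abs{J}=\abs{J^*}V$, and the Borel functional calculus then yields $VE(\Delta)=E^*(\Delta)V$ for every Borel set $\Delta\subset[0,\infty)$.

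Combining these ingredients and using $E^*(\Delta)^2=E^*(\Delta)$,
\[
\rho(\Delta)=\jap{VE(\Delta)\delta_0,\delta_0}=\jap{E^*(\Delta)V\delta_0,\delta_0}=\jap{E^*(\Delta)V\delta_0,E^*(\Delta)\delta_0}.
\]
Cauchy--Schwarz, together with the identity $\norm{E^*(\Delta)V\delta_0}=\norm{VE(\Delta)\delta_0}\leq\norm{E(\Delta)\delta_0}=\sqrt{\nu(\Delta)}$ (the inequality since $V$ is a contraction) and $\norm{E^*(\Delta)\delta_0}=\sqrt{\nu(\Delta)}$, now yields $\abs{\rho(\Delta)}\leq\nu(\Delta)$, which is exactly the required bound. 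Uniqueness in $L^\infty(\nu)$ is then immediate: if two candidates $\psi_1,\psi_2$ both satisfy \eqref{a10}, the complex measure $s(\psi_1-\psi_2)\,\dd\nu$ annihilates every $f\in C(\bbR)$ and therefore vanishes, so $\psi_1=\psi_2$ holds $\nu$-almost everywhere on $(0,\infty)$; combined with the normalisation $\psi_1(0)=\psi_2(0)=1$ this gives equality in $L^\infty(\nu)$.
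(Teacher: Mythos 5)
Your argument is correct, and it reaches the conclusion by a route that differs in its technical packaging from both proofs in the paper, while resting on the same two structural facts: the $C$-symmetry forces $\delta_0$ to have the \emph{same} scalar spectral measure $\nu$ with respect to $\abs{J}$ and $\abs{J^*}$, and an intertwining relation transports spectral projections of $\abs{J}$ to those of $\abs{J^*}$. The paper's first proof never introduces the polar decomposition: it splits $f=gh$ with $g(s)=s^{1/2}\abs{f(s)}^{1/2}$, uses $Jg(\abs{J})=g(\abs{J^*})J$ (Lemma~\ref{lma.b2}), bounds the linear functional $f\mapsto\jap{Jf(\abs{J})\delta_0,\delta_0}$ on a dense subspace of $L^1(s\,\dd\nu)$, and obtains $\psi$ from $L^1$--$L^\infty$ duality. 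You instead take the ordinary polar decomposition $J=V\abs{J}=\abs{J^*}V$, derive the projection-level intertwining $VE(\Delta)=E^*(\Delta)V$, prove the setwise bound $\abs{\rho(\Delta)}\leq\nu(\Delta)$ by Cauchy--Schwarz, and obtain $\psi$ as a Radon--Nikodym derivative; this is closer in spirit to the paper's second proof, which uses the refined antilinear polar decomposition $J=C\mathcal{I}\abs{J}$ of Garcia--Putinar, but you avoid any $C$-symmetric machinery beyond the elementary identity $\abs{J^*}=C\abs{J}C$. Your version buys a cleaner measure-theoretic statement ($\rho\ll\nu$ with density of modulus $\leq1$) and avoids the slightly fussy square-root splitting near $s=0$; the paper's duality version buys a proof that never mentions $V$ at all. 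Two routine points you compress and should make explicit in a write-up: the passage from $V\abs{J}=\abs{J^*}V$ to $VE(\Delta)=E^*(\Delta)V$ requires the usual polynomial-to-Borel approximation (as in Lemma~\ref{lma.b2}), and the setwise bound $\abs{\rho(\Delta)}\leq\nu(\Delta)$ gives $\abs{\rho}(\Delta)\leq\nu(\Delta)$ for the total variation (sum over partitions), which is what actually yields $\abs{\dd\rho/\dd\nu}\leq1$ $\nu$-a.e.
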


We note that $\psi(0)=1$ is just a normalisation condition which ensures the uniqueness of $\psi$. 
Furthermore, if $\nu(\{0\})=0$, i.e. if $\Ker J=\{0\}$, this condition is not needed. The phase function $\psi$ is uniquely defined by the moments (compare with \eqref{a13a})
\begin{equation}
\jap{J(J^*J)^n\delta_0,\delta_0}=\int_0^\infty s^{2n+1}\psi(s) \dd\nu(s),\quad n\geq0,
\label{a13b}
\end{equation}
and the normalization $\psi(0)=1$.

Theorem~\ref{thm.a2} uses no specific features of Jacobi matrices. In fact, it holds true for any bounded $C$-symmetric operator $J$ and for any vector $\delta_0$ with $C\delta_0=\delta_0$. 
We provide two proofs of Theorem~\ref{thm.a2} in Section~\ref{sec.b}. The first proof is self-contained and uses no theory of $C$-symmetric operators. The second proof is based on the refined polar decomposition for $C$-symmetric operators, see e.g.~\cite[Theorem 2]{gar-put_tams07}; it emphasizes the link with the theory of $C$-symmetric operators and may be useful for possible generalizations.

\subsection{The main result}
The \emph{spectral data} of $J$ is the pair 
$$
\Lambda(J):=(\nu,\psi),
$$
where $\nu$ is defined by~\eqref{eq:def_nu} and $\psi$ is the phase function of Theorem~\ref{thm.a2}.
Our main result is
\begin{theorem}[Inverse spectral problem, non-self-adjoint case]\label{thm.a3}
$\, $ 
\begin{enumerate}[\rm (i)]
\item
Uniqueness: Any bounded Jacobi matrix $J$ satisfying \eqref{a2} is uniquely defined by its spectral data $\Lambda(J)$. 
\item
Surjectivity:
Let $\nu$ be a probability measure with a bounded infinite support in $[0,\infty)$. Let $\psi\in L^\infty(\nu)$ be a function such that $\abs{\psi(s)}\leq1$ for $\nu$-a.e. $s\geq0$ and $\psi(0)=1$. Then $(\nu,\psi)=\Lambda(J)$ for a bounded Jacobi matrix $J$ satisfying  \eqref{a2}.
\end{enumerate}
\end{theorem}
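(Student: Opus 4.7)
For part (i) (uniqueness), I will recursively reconstruct the Jacobi parameters $(a_j, b_j)$ from the spectral data. By (2.8a)--(2.8b), the data $(\nu, \psi)$ yields the moments $\int s^{2n}\,d\nu = \langle(J^*J)^n\delta_0,\delta_0\rangle$ and $\int s^{2n+1}\psi\,d\nu = \langle J(J^*J)^n\delta_0,\delta_0\rangle$. Expanding the right-hand sides in $\{\delta_k\}$ using the pentadiagonal structure of $J^*J$ together with the three-term recurrence, each moment becomes a polynomial in the Jacobi entries. The base step gives $b_0 = \int s\psi\,d\nu$ and $a_0^2 = \int s^2\,d\nu - |b_0|^2$. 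At step $n$ of the induction, $b_n$ (or its complex conjugate, depending on parity) appears in $\int s^{2n+1}\psi\,d\nu$ linearly with nonvanishing coefficient $a_0^2 a_1^2 \cdots a_{n-1}^2$, contributed by the unique ``extremal'' length-$n$ path through the index $n$ in the combinatorial expansion, while higher-order terms in new parameters vanish at this order. Hence $b_n$ is determined by previously-known parameters and $(\nu,\psi)$; then $\int s^{2n+2}\,d\nu$ yields $a_n^2$ linearly with the same nonvanishing coefficient.

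For part (ii) (surjectivity), I plan to run the same recursion with arbitrary valid spectral data $(\nu, \psi)$ as input, defining $(a_j, b_j)$ by the explicit formulas from (i). There are three items to verify: (a) $a_j^2 > 0$ strictly at every step, so $a_j > 0$ is well-defined and the recursion does not terminate; (b) $\sup_j(a_j + |b_j|) < \infty$, so $J$ defines a bounded operator on $\ell^2(\bbN_0)$; (c) the constructed $J$ satisfies $\Lambda(J) = (\nu,\psi)$. Item (c) is automatic from the construction: the formulas enforce that the moments (2.8a)--(2.8b) of $J$ coincide with those of the input, and by the argument of Theorem~\ref{thm.a2} together with the normalization $\psi(0)=1$ these moments uniquely determine $(\nu,\psi)$. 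Item (b) follows from the compact support of $\nu$ via standard moment-problem estimates.

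The main obstacle is item (a). The base case $a_0^2 > 0$ is a strict Cauchy--Schwarz inequality $|\int s\psi\,d\nu|^2 < \int s^2\,d\nu$, using $|\psi|\le 1$ and the fact that $\nu$ is not a point mass. For the inductive step, rather than proving the required positivity by direct calculation, I would argue geometrically: build an auxiliary $C$-symmetric model operator $\widehat J$ on $L^2(\nu)\oplus L^2((1-|\psi|^2)\,d\nu)$ with a distinguished $C$-invariant unit vector $e_0 = (1,0)$ for which the prescribed moments (2.8a)--(2.8b) hold by construction. In this model $a_j^2$ is identified with the squared norm of the residual of $\widehat J^{\,j+1}e_0$ after orthogonal projection onto $\mathrm{span}\{e_0,\widehat J e_0,\ldots,\widehat J^{\,j}e_0\}$, which is manifestly non-negative, and is strictly positive so long as those iterates are linearly independent --- a property that follows from the hypothesis that $\nu$ has infinite support. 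This companion model also offers a natural path toward the final identification of $\widehat J$ with a Jacobi matrix, though extracting the tridiagonal structure (which requires a $C$-invariant basis) is a delicate point I would expect to handle via the symmetric bilinear form $[u,v]:=\langle u, Cv\rangle$, under which $\widehat J$ becomes ``self-adjoint'' and a Gram--Schmidt procedure produces the Jacobi basis.
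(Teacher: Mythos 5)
Your part (i) is essentially sound and is, at bottom, the same argument as the paper's with the block-matrix packaging removed: the assertions that $b_n$ (or $\overline{b_n}$, depending on parity) enters $\int s^{2n+1}\psi\,\dd\nu=\jap{J(J^*J)^n\delta_0,\delta_0}$ linearly with non-vanishing coefficient $a_0^2\cdots a_{n-1}^2$, that all other terms involve only earlier parameters, and that $\int s^{2n+2}\dd\nu$ then determines $a_n^2$ with the same leading coefficient, are exactly the content of the paper's combinatorial Lemma~\ref{lem.moments} (stated there for $\mathbb{P}_0\bbJ^m\mathbb{P}_0^*$). You assert this combinatorial fact rather than prove it, but it is true and the induction it feeds is the right one; the paper instead routes uniqueness through the known Theorem~\ref{thm.cc2}(i) for block Jacobi matrices plus a phase-fixing argument, and the two routes are equivalent in substance.

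The genuine gap is in part (ii), at exactly the point you flag as the main obstacle: strict positivity of $a_n^2$ for all $n$. The quantity to be shown positive is a universal function of the \emph{polar} moments $\jap{(J^*J)^k\delta_0,\delta_0}$ and $\jap{J(J^*J)^k\delta_0,\delta_0}$ alone. A model $\widehat J$ on $L^2(\nu)\oplus L^2((1-\abs{\psi}^2)\dd\nu)$ can indeed be built (via the refined polar decomposition $\widehat J=C\mathcal{I}M_s$) so that these polar moments come out right, but the residual norm $\norm{(I-P_n)\widehat J^{\,n+1}e_0}^2$ is a function of the \emph{mixed} moments $\jap{(\widehat J^*)^l\widehat J^m e_0,e_0}$, which are not determined by the polar moments and in general do not agree with those of the target Jacobi matrix; so the identification of $a_0^2\cdots a_n^2$ with that residual norm is unjustified. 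A concrete obstruction: for constant $\psi\equiv e^{\ii\theta}$ the natural model is the \emph{normal} operator $e^{\ii\theta}M_s$ on $L^2(\nu)$, while the Jacobi matrix with this spectral data is generically not normal (the recursion yields $b_0=e^{\ii\theta}\int s\,\dd\nu$ but $b_1$ proportional to $e^{-\ii\theta}$), so the two operators are not unitarily equivalent and $\widehat J$ cannot be ``identified with'' $J$. The same issue sinks the concluding step: Gram--Schmidt on $\{\widehat J^k e_0\}$, whether with respect to $\jap{\cdot,\cdot}$ or to $[u,v]=\jap{u,Cv}$, does not produce a $C$-invariant basis (already $e_1\propto(I-P_0)\widehat Je_0$ satisfies $Ce_1\propto e_1$ only if $(I-P_0)\widehat J^*e_0\propto(I-P_0)\widehat Je_0$, which fails for non-real non-constant $\psi$), the bilinear form can be isotropic on the relevant vectors, and without $C$-invariance one gets a Hessenberg matrix, not a symmetric tridiagonal one. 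The positivity you need is genuinely a statement about a $2\times2$ \emph{matrix} moment problem: in the paper it is the non-degeneracy of the measure $\meas$ of \eqref{d1}, proved via the polynomial $w(s)=p(s)\overline{p}(-s)+q(s)\overline{q}(-s)$ evaluated on the imaginary axis, and the correct model carries the two-dimensional distinguished subspace of $\bJ=\begin{pmatrix}0&J\\ J^*&0\end{pmatrix}$, not a single cyclic vector. Your item (b) (boundedness) has the same dependency: absent an a priori bounded operator realizing the data, ``standard moment estimates'' are not available.
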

The proof of part (i) is given in Section~\ref{sec.c}.
The proof of part (ii) is given in Section~\ref{sec.d}.

It follows from the theorem that the spectral mapping $\Lambda: J\to(\nu,\psi)$ is a~bijection between the sets described in the statement of the theorem. In fact, this bijection is a homeomorphism with respect to suitable topologies. We will say that a sequence of spectral data $(\nu_N,\psi_N)$  \emph{converges weakly} to the spectral data $(\nu,\psi)$ (all $(\nu_N,\psi_N)$ and $(\nu,\psi)$ satisfy the constraints of Theorem~\ref{thm.a3}(ii)) if the measures $\nu_N$ are supported in a compact set (independent of $N$) and for any continuous function $f$ on $\bbR$ we have the two relations
\begin{align*}
\int_0^\infty f(s)\dd\nu_N(s)&\to\int_0^\infty f(s)\dd\nu(s), 
\\
\int_0^\infty f(s)\psi_N(s)d\nu_N(s)&\to\int_0^\infty f(s)\psi(s)d\nu(s)
\end{align*}
as $N\to\infty$. 

\begin{theorem}[Homeomorphism]\label{thm.a-homeo}
The bijection $J\mapsto \Lambda(J)$ is a homeomorphism with respect to the strong operator topology on the set of operators $J$ and the topology of weak convergence on the set of  spectral data. In other words, $J_N\to J$ strongly if and only if $\Lambda(J_N)\to\Lambda(J)$ weakly. 
\end{theorem}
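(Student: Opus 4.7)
The plan is to prove the two implications separately, with the reverse direction reduced to the forward direction plus the uniqueness statement of Theorem~\ref{thm.a3}(i) via a standard compactness argument.

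For $J_N\to J$ strongly, the $C$-symmetry $J_N^*=CJ_NC$ and the continuity of the antilinear involution $C$ give $J_N^*\to J^*$ strongly; multiplying uniformly bounded strongly convergent sequences then yields $J_N^*J_N\to J^*J$ strongly, and the functional calculus for uniformly bounded self-adjoint operators (via polynomial approximation) yields $f(\abs{J_N})\to f(\abs{J})$ strongly for every $f\in C(\bbR)$. Taking matrix elements against $\delta_0$ and using Theorem~\ref{thm.a2}, we obtain
\[
\int f\dd\nu_N=\jap{f(\abs{J_N})\delta_0,\delta_0}\to\jap{f(\abs{J})\delta_0,\delta_0}=\int f\dd\nu
\]
and
\[
\int sf(s)\psi_N(s)\dd\nu_N(s)=\jap{J_Nf(\abs{J_N})\delta_0,\delta_0}\to\jap{Jf(\abs{J})\delta_0,\delta_0}=\int sf(s)\psi(s)\dd\nu(s)
\]
for every $f\in C(\bbR)$. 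The first is the weak convergence $\nu_N\to\nu$. The second, after the substitution $g(s)=sf(s)$ and density, gives convergence of $\int g\,\psi_N\dd\nu_N\to\int g\,\psi\dd\nu$ for continuous $g$ vanishing at $0$, while the residual behaviour at the atom $\{0\}$ is controlled by the normalisation $\psi_N(0)=\psi(0)=1$.

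For the reverse direction, assume $\Lambda(J_N)\to\Lambda(J)$ weakly with $\supp\nu_N\subset[0,M]$. Then $\norm{J_N}\leq M$ uniformly, so the parameters $a_j^{(N)},b_j^{(N)}$ are bounded in $N$. By a diagonal extraction, any subsequence has a further subsequence $\{N_k\}$ along which $a_j^{(N_k)}\to\tilde a_j\geq0$ and $b_j^{(N_k)}\to\tilde b_j$ for every $j\in\bbN_0$; the resulting bounded tridiagonal operator $\tilde J$ satisfies $J_{N_k}\to\tilde J$ strongly (from pointwise convergence on the basis $\{\delta_j\}$ and the uniform operator bound). Applying the forward implication in this slightly broader class (allowing $\tilde a_j=0$) gives $\Lambda(J_{N_k})\to\Lambda(\tilde J)$, so by uniqueness of weak limits $\Lambda(\tilde J)=(\nu,\psi)$. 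Since $\nu$ has infinite support, $\tilde J$ cannot split off a finite Jacobi block, which forces $\tilde a_j>0$ for every $j$; hence $\tilde J$ belongs to our class, and Theorem~\ref{thm.a3}(i) yields $\tilde J=J$. The standard subsequence principle then upgrades this to $J_N\to J$ strongly.

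The main obstacle is the forward direction's handling of $s=0$: Theorem~\ref{thm.a2} identifies only the shifted phase moments $\int s^{m+1}\psi\dd\nu$ with matrix elements of $J$, so the convergence of $\int g\,\psi_N\dd\nu_N$ for $g\in C(\bbR)$ with $g(0)\neq0$ is not automatic. To close this gap one exploits the uniform total-variation bound $\abs{\psi_N\dd\nu_N}\leq\nu_N$, the weak convergence $\nu_N\to\nu$, and the pointwise value $\psi_N(0)=1$ which pins down any potential defect at the atom $\{0\}$. An alternative, conceptually cleaner route is to appeal to the refined polar decomposition of $C$-symmetric operators used in the second proof of Theorem~\ref{thm.a2}, which realises $\psi\dd\nu$ as a spectral object whose dependence on $J$ in the strong operator topology is manifest.
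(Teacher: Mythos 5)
Your forward implication is essentially the paper's: strong convergence of $J_N$ and $J_N^*$ gives convergence of the moments $\jap{(J_N^*J_N)^n\delta_0,\delta_0}$ and $\jap{J_N(J_N^*J_N)^n\delta_0,\delta_0}$, i.e.\ of $\int s^{2n}\dd\nu_N$ and $\int s^{2n+1}\psi_N\dd\nu_N$, and one extends from polynomials to continuous functions using the uniform bound on $\supp\nu_N$ coming from $\norm{J}=\sup(\supp\nu)$. Your reverse implication, however, takes a genuinely different route. The paper proves directly, by induction on $j$ using the block-Jacobi moment expansion of Lemma~\ref{lem.moments}, that weak convergence of spectral data forces $a_j(J_N)\to a_j(J)$ and $b_j(J_N)\to b_j(J)$ for every $j$, and deduces strong convergence from there. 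You instead extract a subsequential limit $\tilde J$ of the Jacobi parameters, apply the forward implication and uniqueness (Theorem~\ref{thm.a3}(i)), and invoke the subsequence principle. This is a valid alternative: it bypasses the combinatorics of Lemma~\ref{lem.moments} entirely, at the cost of having to check that $\tilde J$ stays in the admissible class (your argument that $\tilde a_j>0$ because $\nu$ has infinite support is correct) and of using the metrizability of the strong topology on norm-bounded sets, which holds here since $\ell^2(\bbN_0)$ is separable.

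The genuine gap is exactly where you flag it, and your proposed repair does not work. The moment identities control $\int g\,\psi_N\dd\nu_N$ only for $g$ in the uniform closure of $\{s\,h(s):h\in C(\bbR)\}$, i.e.\ for continuous $g$ with $g(0)=0$. For the remaining case $g\equiv 1$ the normalisation $\psi_N(0)=1$ pins down nothing, because the obstruction is not the value of $\psi_N$ \emph{at} the atom $\{0\}$ but mass of $\nu_N$ located near (not at) the origin whose phase is far from $1$ and which migrates to the origin in the limit. Concretely, take $J$ self-adjoint with $m:=\nu(\{0\})>0$ (such $J$ exist, e.g.\ from a spectral measure $\mu$ with an atom at $0$) and set $J_N=J+\tfrac{\ii}{N}I$. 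Then $J_N\to J$ in norm, $\abs{J_N}=\sqrt{J^2+N^{-2}}$, the measure $\nu_N$ has an atom of mass $m$ at $s=1/N$, and comparing $\jap{J_Nf(\abs{J_N})\delta_0,\delta_0}$ with $\int sf(s)\psi_N(s)\dd\nu_N(s)$ at that atom gives $\psi_N(1/N)=\ii$. Hence $\int f\psi_N\dd\nu_N\to \ii\,m\,f(0)+\int_{(0,\infty)}f\psi\dd\nu$, which differs from $\int f\psi\dd\nu$ whenever $f(0)\neq0$. So the second convergence in the definition of weak convergence of spectral data genuinely fails for test functions not vanishing at the origin, and no total-variation or normalisation argument can rescue it. To be fair, the paper's own proof stops at the same point: it establishes the convergence only for test functions of the form $sf(s^2)$ and then asserts the conclusion. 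Either the weak convergence should be understood as convergence of the measures $s\psi_N(s)\dd\nu_N(s)$ (in which case both your argument and the paper's close), or the case $\nu(\{0\})>0$ requires a separate hypothesis. Your reverse direction is unaffected by this, since it only consumes the moments $\int s^{2n+1}\psi\,\dd\nu$, whose test functions vanish at the origin.
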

The proof is given in Section~\ref{sec.e}.

\begin{remark}\label{rmk.arg}
It is easy to adapt Theorem~\ref{thm.a3} to the case when the off-diagonal Jacobi parameters $a_{j}$ are not assumed to be positive but instead their complex arguments are prescribed. In this case, the spectral mapping $\Lambda$ is a bijection between the set of bounded Jacobi operators with a prescribed sequence $\{\arg a_{j}\}_{j=0}^{\infty}$ and the set of pairs $(\nu,\psi)$ as described in Theorem~\ref{thm.a3}(ii).
In Section~\ref{sec.d7} we will comment on the very few places in the proofs that would change in this setting.
\end{remark}

\subsection{Specific classes of $J$}

Here we characterise specific classes of Jacobi matrices $J$ in terms of their phase function.

\begin{theorem}\label{thm.a3a}
$\, $ 
\begin{enumerate}[\rm (i)]
\item
The spectrum of $J^*J$ is simple if and only if $\abs{\psi(s)}=1$ for $\nu$-a.e. $s\geq0$.
\item
$J$ is self-adjoint if and only if $\psi(s)$ is real-valued for $\nu$-a.e. $s\geq0$. 
\item
We have $b_n=0$ for all $n$ if and only if $\psi(s)=0$ for $\nu$-a.e. $s>0$. 
\item
$J$ is self-adjoint and the spectrum of $J^2$ is simple if and only if $\psi(s)=\pm1$ for $\nu$-a.e. $s>0$. 
\item
If $J$ is normal (i.e. $J^*J=JJ^*$) and the spectrum of $\abs{J}$ is simple, then 
$$
J=\abs{J}\psi(\abs{J}).
$$
\end{enumerate}
\end{theorem}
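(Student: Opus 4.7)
\emph{Plan.} I address the five parts in turn, relying on Theorems~\ref{thm.a1}, \ref{thm.a2}, and \ref{thm.a3}, together with the explicit model of $J$ built in the proof of surjectivity (Section~\ref{sec.d}). For part~(i), observe that $\operatorname{spec}(J^*J)$ is simple iff $\operatorname{spec}(|J|)$ is, and by Theorem~\ref{thm.a1} the latter is equivalent to $\mathcal{H}_0:=\overline{\Span\{|J|^n\delta_0:n\geq0\}}=\ell^2(\bbN_0)$. Inspecting the construction in Section~\ref{sec.d}, the model of~$J$ with spectral data $(\nu,\psi)$ realises $\mathcal{H}_0$ as $L^2(\nu)$ and $\mathcal{H}_0^\perp$ (as a carrier of $|J|$) as $L^2(\sigma)$, where $\dd\sigma(s)=(1-|\psi(s)|^2)\dd\nu(s)$ on $(0,\infty)$. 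So $\mathcal{H}_0^\perp=\{0\}$ iff $\sigma=0$ iff $|\psi|=1$ $\nu$-a.e.\ on $(0,\infty)$; combined with $\psi(0)=1$ this reads $|\psi|=1$ $\nu$-a.e.

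For part~(ii), I compute the spectral data of $J^*$ and show they equal $(\nu,\bar\psi)$; Theorem~\ref{thm.a3}(i) then gives $J=J^*$ iff $\psi$ is real-valued. The spectral measure of $|J^*|$ at $\delta_0$ equals $\nu$ (noted after~\eqref{eq:def_nu}); using the $C$-symmetry relations $J^*\delta_0=CJ\delta_0$ and $Cf(|J^*|)=\bar f(|J|)C$ together with the intertwining $\bar f(|J^*|)J=J\bar f(|J|)$ (a consequence of $V|J|=|J^*|V$ in $J=V|J|$), one obtains
\begin{align*}
\langle J^*f(|J^*|)\delta_0,\delta_0\rangle
&=\overline{\langle J\delta_0,f(|J^*|)\delta_0\rangle}
=\overline{\langle \bar f(|J^*|)J\delta_0,\delta_0\rangle}\\
&=\overline{\langle J\bar f(|J|)\delta_0,\delta_0\rangle}
=\int_0^\infty sf\bar\psi\,\dd\nu,
\end{align*}
which identifies the phase of~$J^*$ as~$\bar\psi$.

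For part~(iii), $(\Rightarrow)$: if $b_n=0$ for all~$n$, then $J=J^*$ is a real symmetric Jacobi matrix whose action flips the parity of basis indices, so $|J|^2=J^2$ preserves the decomposition $\ell^2(\bbN_0)=\ell^2_{\mathrm{even}}\oplus\ell^2_{\mathrm{odd}}$, and so does $|J|$ by the Borel functional calculus. Since $\delta_0\in\ell^2_{\mathrm{even}}$, we get $Jf(|J|)\delta_0\in\ell^2_{\mathrm{odd}}\perp\delta_0$ for every $f$, whence $\int sf\psi\,\dd\nu=0$ and $\psi=0$ $\nu$-a.e.\ on $(0,\infty)$. $(\Leftarrow)$: let $\tilde\mu$ be the symmetric extension of~$\nu$ to~$\bbR$; by Theorem~\ref{thm.0b} there is a real self-adjoint Jacobi matrix~$\tilde J$ with spectral measure $\tilde\mu$, and the symmetry of $\tilde\mu$ forces the diagonal of $\tilde J$ to vanish. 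Applying~$(\Rightarrow)$ to~$\tilde J$ shows that its spectral data equal $(\nu,\psi)$, and Theorem~\ref{thm.a3}(i) yields $J=\tilde J$.

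Parts~(iv) and~(v) follow quickly. For~(iv), self-adjointness gives $J^2=|J|^2$, so simplicity of $\operatorname{spec}(J^2)$ coincides with simplicity of $\operatorname{spec}(|J|)$; combining with~(ii) gives $\psi\in\{-1,+1\}$ $\nu$-a.e.\ on $(0,\infty)$. For~(v), normality yields $|J|=|J^*|$, and the partial isometry $V$ in $J=V|J|$ commutes with $|J|$; simplicity of $\operatorname{spec}(|J|)$ together with Theorem~\ref{thm.a1} identifies $\ell^2(\bbN_0)$ with $L^2(\nu)$, on which the commutant of $|J|$ is $L^\infty(\nu)$, so $V=\phi(|J|)$ for some $\phi\in L^\infty(\nu)$; inserting this into~\eqref{a10} forces $\phi=\psi$ $\nu$-a.e.\ on $(0,\infty)$, while any discrepancy at $s=0$ is killed by the factor~$|J|$ in $J=|J|\phi(|J|)=|J|\psi(|J|)$. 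The main obstacle is part~(i): the identification of the secondary spectral measure as $(1-|\psi|^2)\nu|_{(0,\infty)}$ is not a formal consequence of the data $(\nu,\psi)$ but must be read off the explicit model constructed in Section~\ref{sec.d}.
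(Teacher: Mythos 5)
Your parts (ii), (iii) and (v) are essentially correct, and (ii) and the converse of (iii) take genuinely different routes from the paper: for (ii) you compute the spectral data of $J^*$ to be $(\nu,\overline{\psi})$ and invoke the uniqueness Theorem~\ref{thm.a3}(i), and for the converse of (iii) you build the zero-diagonal matrix from the symmetrised measure via the classical Theorem~\ref{thm.0b} and again invoke uniqueness. The paper instead proves both converses by induction on the moments $\bbP_0\bbJ^m\bbP_0^*$ using the combinatorial Lemma~\ref{lem.moments}; your route is shorter but leans on the (standard, unproved here) fact that a symmetric measure produces a Jacobi matrix with vanishing diagonal. Your (v) via the commutant of $\abs{J}$ is fine and close in spirit to the paper's argument, which reads the identity off Proposition~\ref{prop.psi.strong.def}.

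The genuine gap is in part (i), and you have diagnosed it yourself. The claim that in the model of $J$ the complement $\calH_0^\perp$ carries $\abs{J}$ with scalar spectral measure $(1-\abs{\psi}^2)\dd\nu$ is nowhere established: the surjectivity construction of Section~\ref{sec.d} produces $J$ through the block matrix $\bbJ$ and its $2\times2$ measure $\meas$, and it gives no direct description of $\calH_0^\perp$ or of the spectral type of $\abs{J}$ restricted to it. Extracting that description would require relating the multiplicity function of $J^*J$ to the rank of the density of $\meas$ (e.g.\ via $\bJ^2=JJ^*\oplus J^*J$ and Hahn--Hellinger theory), which is essentially equivalent to proving statement (i) itself; as written, the step is circular. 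Since (iv) is deduced from (i), it inherits the gap. The paper avoids all of this: for the forward direction it takes $f=1$ in \eqref{eq.strong.psi} (where simplicity forces $\overline{\calP_0}=I$) and compares norms to get $\int_0^\infty s^2(1-\abs{\psi(s)}^2)\dd\nu(s)=0$; for the converse it uses $\abs{\psi}=1$ to show $\norm{\overline{\calP_0}J\delta_0}=\norm{J\delta_0}$, hence $J\delta_0\in\overline{\calH_0}$ and, by conjugation, $J^*\delta_0\in\calH_0$, so that Lemma~\ref{lma.b1} yields $\calH_0=\ell^2(\bbN_0)$. You could repair your (i) by adopting this norm comparison, which needs only Proposition~\ref{prop.psi.strong.def} and Lemma~\ref{lma.b1}, not the explicit model.
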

The proof is given in Section~\ref{sec.e}.

\begin{remark*}
\begin{enumerate}[1.]
\item
We mention without proof that claim~(i) of Theorem~\ref{thm.a3a} can be strengthened as follows: the spectrum of $\abs{J}$ is simple on a Borel set $\Delta\subset[0,\infty)$ (i.e. the spectrum of the restriction of $\abs{J}$ onto $\Ran \chi_\Delta(\abs{J})$ is simple) if and only if $\abs{\psi(s)}=1$ for $\nu$-a.e. $s\in\Delta$.
\item 
Formula $J=\abs{J}\psi(\abs{J})$ from part~(v) of Theorem~\ref{thm.a3a} is essentially the polar decomposition of $J$. This gives some intuition into the phase function $\psi$. 
\end{enumerate}
\end{remark*}

If $J$ is self-adjoint, we can define both the spectral measure $\mu$ as in \eqref{eq.spmeasure} and the spectral data $(\nu,\psi)$. Let us discuss the connection between these parameters. For a measure $\mu$, we set
\begin{equation}
\widetilde\mu(\Delta):=\mu(-\Delta), \quad \Delta\subset\bbR, \quad -\Delta=\{-x: x\in\Delta\}.
\label{a.sharp}
\end{equation}
\begin{theorem}\label{thm.a3b}
For $J=J^*$ we have
\begin{align*}
\dd\mu(s)&=\frac{1+\psi(s)}{2}\dd\nu(s), \quad s\geq0,
\\
\dd\widetilde\mu(s)&=\frac{1-\psi(s)}{2}\dd\nu(s), \quad s>0.
\end{align*}
\end{theorem}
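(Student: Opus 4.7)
The plan is to use the fact that, when $J=J^*$, the functional calculus for $\abs{J}$ is the restriction of the functional calculus for $J$ via the even extension map $f(s)\mapsto f(\abs{s})$, and then to read off the densities of $\mu$ and $\widetilde\mu$ with respect to $\nu$ from two linear relations.

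First I would compare $\nu$ with $\mu+\widetilde\mu$. Since $J=J^*$, for any $f\in C(\bbR)$ one has $f(\abs{J})=f_{e}(J)$ where $f_{e}(s):=f(\abs{s})$. Therefore
\begin{equation*}
\int_{0}^{\infty}f(s)\dd\nu(s)
=\jap{f(\abs{J})\delta_{0},\delta_{0}}
=\jap{f_{e}(J)\delta_{0},\delta_{0}}
=\int_{-\infty}^{\infty}f(\abs{s})\dd\mu(s)
=\int_{0}^{\infty}f(s)\dd(\mu+\widetilde\mu)(s),
\end{equation*}
where in the last step I split the $\mu$-integral into integrals over $[0,\infty)$ and $(-\infty,0)$ and apply the change of variables $s\mapsto-s$ in the second one, using the definition \eqref{a.sharp} of $\widetilde\mu$. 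Since $f\in C(\bbR)$ is arbitrary, this shows $\dd\nu=\dd\mu+\dd\widetilde\mu$ on $[0,\infty)$ (understood as an identity of Borel measures, the atom at $0$ appearing in both $\mu$ and $\widetilde\mu$ being the same).

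Next I would do the analogous computation with the extra factor $J$. The defining identity \eqref{a10} of $\psi$, combined with $Jf(\abs{J})=Jf_{e}(J)$, gives
\begin{equation*}
\int_{0}^{\infty}sf(s)\psi(s)\dd\nu(s)
=\jap{Jf_{e}(J)\delta_{0},\delta_{0}}
=\int_{-\infty}^{\infty}sf(\abs{s})\dd\mu(s).
\end{equation*}
Splitting the last integral over $[0,\infty)$ and $(-\infty,0)$ and again substituting $s\mapsto-s$ in the negative half (which flips the sign of the $s$ factor) produces
\begin{equation*}
\int_{0}^{\infty}sf(s)\psi(s)\dd\nu(s)
=\int_{0}^{\infty}sf(s)\dd\mu(s)-\int_{0}^{\infty}sf(s)\dd\widetilde\mu(s).
\end{equation*}
Since this holds for every $f\in C(\bbR)$ and the factor $s$ is nonzero on $(0,\infty)$, I conclude $\psi\,\dd\nu=\dd\mu-\dd\widetilde\mu$ on $(0,\infty)$.

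Finally I would solve the two linear relations $\dd\mu+\dd\widetilde\mu=\dd\nu$ and $\dd\mu-\dd\widetilde\mu=\psi\,\dd\nu$ on $(0,\infty)$ by adding and subtracting, obtaining the stated formulas for $s>0$. For the endpoint $s=0$ I would argue separately: $\nu(\{0\})=\mu(\{0\})$ (since $\abs{s}=0$ iff $s=0$) and $\psi(0)=1$ by definition, which makes the first displayed formula valid also at $s=0$, while the second formula is only asserted for $s>0$ (as the atom of $\widetilde\mu$ at $0$ equals that of $\mu$, not zero, in general). There is no real obstacle here; the only point requiring a little care is the bookkeeping at $s=0$, and this is handled precisely by the normalisation $\psi(0)=1$.
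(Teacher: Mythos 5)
Your proposal is correct and follows essentially the same route as the paper: the paper equates the moments $\jap{J^{m}\delta_0,\delta_0}$ with $\int_0^\infty s^{2n}\dd\nu$ and $\int_0^\infty s^{2n+1}\psi\,\dd\nu$, extends to continuous even test functions, and then performs exactly the splitting of $\mu$ into its parts on $[0,\infty)$ and $(-\infty,0)$ that you carry out; your use of $f(\abs{J})=f_{e}(J)$ is just the spectral-theorem form of the same computation. You additionally spell out the bookkeeping at $s=0$ (via $\nu(\{0\})=\mu(\{0\})$ and $\psi(0)=1$), which the paper leaves implicit in ``the required statement easily follows''.
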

The proof is given in Section~\ref{sec.e}.

\subsection{The phase function: geometric interpetation} 
Here we give a geometric interpretation of the phase function by considering the simple case 
when $s>0$ is a singular value of $J$, i.e. when the subspace 
$$
E(s):=\Ker(J^*J-s^2I)
$$
is non-trivial. By Theorem~\ref{thm.a1}, the dimension of  $E(s)$ equals $1$ or $2$.  
Along with $E(s)$, we also consider $\overline{E(s)}=\Ker(JJ^*-s^2I)$. Note that $x\in E(s)$ if and only if $\overline{x}\in \overline{E(s)}$ and in particular the dimensions of $E(s)$ and $\overline{E(s)}$ coincide. 

Let $x\in E(s)$; applying $J$ to the equation $J^*Jx=s^2x$, we find $JJ^*(Jx)=s^2(Jx)$, i.e. $Jx\in\overline{E(s)}$. Thus, $J$ acts from $E(s)$ to $\overline{E(s)}$.
Moreover, for each $x\in E(s)$ we have 
$$
\norm{Jx}^2=\jap{J^*Jx,x}=s^2\norm{x}^2,
$$
i.e. $\frac1s J$ acts as an isometry from $E(s)$ onto $\overline{E(s)}$.
The phase function $\psi(s)$ is a parameter of this isometry, chosen as follows.

Let us first consider the case $\dim E(s)=\dim\overline{E(s)}=1$; then in order to describe the above isometry, we only need to specify the ``angle of rotation''. This angle is specified by reference to a pair of distinguished vectors in $E(s)$ and $\overline{E(s)}$. 
Let $h_s$ be the projection of $\delta_0$ onto $E(s)$ and $\overline{h_s}$ be the projection of $\delta_0$ onto $\overline{E(s)}$. We have
$$
\nu(\{s\})=\norm{h_s}^2=\norm{\overline{h_s}}^2>0
$$
by Theorem~\ref{thm.a1}. In this case $\psi(s)$ can be identified with the unimodular complex number in the identity
$$
\tfrac1s Jh_s=\psi(s)\overline{h_s}.
$$
Next, consider the case $\dim E(s)=2$. Let us consider the matrix of $\tfrac1sJ$ with respect to the basis $h_s,h_s^\perp$ in $E(s)$ and $\overline{h_s},\overline{h_s}^\perp$ in $\overline{E(s)}$. 
Then $\psi(s)$ can be identified with the top left entry of this matrix. In other words, 
$$
\tfrac1s\jap{Jh_s,\overline{h_s}}=\psi(s). 
$$
Since $\tfrac1s J$ is an isometry, we have $\abs{\psi(s)}\leq1$.

\subsection{Analogues of orthogonal polynomials for non-self-adjoint $J$}
Recall the recursive definition of classical orthogonal polynomials~\eqref{a.ogpol}. When the Jacobi parameters are allowed to be complex it is natural to study solutions of the \emph{antilinear} eigenvalue problem
\begin{equation}
 Jq(s)=s\overline{q}(s)
 \label{eq.anti.evlp}
\end{equation}
for $s\geq0$ (note the complex conjugation in the r.h.s.) and the infinite column vectors 
$$
q(s)=(q_{0}(s),q_{1}(s),\dots)\quad\text{ and }\quad\overline{q}(s)=(\overline{q}_{0}(s),\overline{q}_{1}(s),\dots),
$$ 
where $\overline{q}_{j}(s)\equiv\overline{q_j(s)}$. 
We fix the initial condition $q_{0}=1$ and introduce the sequence of polynomials $q_{j}$ defined uniquely (since $a_{j}\neq0$) by the recurrence 
\begin{equation}
\begin{aligned}
 b_{0}q_{0}(s)+a_{0}q_{1}(s)&=s\overline{q}_{0}(s),\\
 a_{j-1}q_{j-1}(s)+b_{j}q_{j}(s)+a_{j}q_{j+1}(s)&=s\overline{q}_{j}(s), \quad j\geq1.
\end{aligned}\label{eq.q.recur}
\end{equation}
Clearly, if the Jacobi parameters $a_{j}$ and $b_{j}$ are real and $a_j>0$, the polynomials $q_{j}$ coincide with the usual orthogonal polynomials $p_{j}$. 

A consideration of the antilinear eigenvalue problem~\eqref{eq.anti.evlp} for complex symmetric matrices goes back at least to Takagi~\cite{tak_24} and has been studied in greater generality for $C$-symmetric unbounded operators with compact resolvent in~\cite{gar-put_tams07}; it also plays an important role in the inverse spectral theory for compact Hankel operators (see e.g. \cite{AAK,GG1}).
Our goal here is to point out an orthogonality relation for polynomials $q_{j}$ in terms of the spectral data $(\nu,\psi)$ corresponding to the Jacobi matrix $J$ with the Jacobi parameters $a_{j}$ and $b_{j}$.

\begin{theorem}\label{thm.og}
Let $J$ be a bounded Jacobi matrix satisfying~\eqref{a2}, and let $q_j(s)$ be the polynomials defined by \eqref{eq.q.recur} with $q_0=1$. 
Then, for all $j,k\geq0$, we have the orthogonality relation
$$
\frac12\int_0^\infty
\Jap{\begin{pmatrix}
1+\Re \psi(s)& -\ii\Im\psi(s)
\\
\ii\Im\psi(s)&1-\Re\psi(s)
\end{pmatrix}
\begin{pmatrix}q_j(s)\\q_j(-s)\end{pmatrix},
\begin{pmatrix}q_k(s)\\q_k(-s)\end{pmatrix}
}_{\bbC^2}\dd\nu(s)=\delta_{j,k},
$$
or alternatively,
$$
\int_0^\infty
\Jap{\begin{pmatrix}
1& \overline{\psi(s)}
\\
\psi(s)&1
\end{pmatrix}
\begin{pmatrix}q_{j}^{\rm{e}}(s) \\ q_{j}^{\rm{o}}(s)\end{pmatrix},
\begin{pmatrix}q_{k}^{\rm{e}}(s) \\ q_{k}^{\rm{o}}(s)\end{pmatrix}
}_{\bbC^2}\dd\nu(s)=\delta_{j,k},
$$
where
$$
 q_{j}^{\rm{e}}(s):=\frac{q_{j}(s)+q_{j}(-s)}{2} \quad\mbox{ and }\quad q_{j}^{\rm{o}}(s):=\frac{q_{j}(s)-q_{j}(-s)}{2}.
$$

\end{theorem}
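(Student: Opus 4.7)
The claimed identity is a Parseval-type relation between the spectral data $(\nu,\psi)$ and the polynomials $q_j$. The plan is to identify the pair $(q_j^{\mathrm{e}},q_j^{\mathrm{o}})$ with the ``coordinate functions'' of $\delta_j$ in a spectral representation of $|J|$, and to read off the orthogonality as the resulting Parseval identity.

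First, I use Theorem~\ref{thm.a1} to realise $|J|$ as multiplication by $s$ on a direct integral $\int^{\oplus}\bbC^{m(s)}\,\dd\nu(s)$ with $m(s)\in\{1,2\}$, choosing the unitary $U$ so that $\delta_0$ is sent to a fiber-wise unit vector $e_0(s)$. Since $(UJU^{-1})^*(UJU^{-1})=s^2 I$, the operator $UJU^{-1}$ must act fiber-wise as $s\,V(s)$ for a fiber-wise isometry $V(s)$, and Theorem~\ref{thm.a2} then identifies $\psi(s)=\jap{V(s)e_0(s),e_0(s)}_{\bbC^{m(s)}}$. Second, I use the Jacobi recurrence $a_j\delta_{j+1}=J\delta_j-b_j\delta_j-a_{j-1}\delta_{j-1}$ together with the decomposition of \eqref{eq.q.recur} into even and odd parts in $s$,
\[
a_j\begin{pmatrix}q_{j+1}^{\mathrm{e}}\\ q_{j+1}^{\mathrm{o}}\end{pmatrix}=s\begin{pmatrix}0&1\\1&0\end{pmatrix}\begin{pmatrix}\overline{q_j^{\mathrm{e}}}\\ \overline{q_j^{\mathrm{o}}}\end{pmatrix}-b_j\begin{pmatrix}q_j^{\mathrm{e}}\\ q_j^{\mathrm{o}}\end{pmatrix}-a_{j-1}\begin{pmatrix}q_{j-1}^{\mathrm{e}}\\ q_{j-1}^{\mathrm{o}}\end{pmatrix},
\]
to compute $U\delta_j$ inductively in terms of $q_j^{\mathrm{e}}$, $q_j^{\mathrm{o}}$, $\psi$, and the ``transverse'' entries of $V(s)$. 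The Parseval identity
\[
\delta_{j,k}=\jap{\delta_j,\delta_k}_{\ell^2}=\int_0^\infty\jap{U\delta_j,U\delta_k}_{\bbC^{m(s)}}\,\dd\nu(s)
\]
then reduces, after simplification using $V(s)V(s)^*=I$, to an integral of a $2\times 2$ sesquilinear form in $(q_j^{\mathrm{e}},q_j^{\mathrm{o}})$ and $(q_k^{\mathrm{e}},q_k^{\mathrm{o}})$ with weight a polynomial expression in $\psi,\overline\psi$, matching the claim. The equivalence of the two displayed forms of the theorem is an elementary change of basis: $\bigl(q_j(s),q_j(-s)\bigr)^{T}=H\bigl(q_j^{\mathrm{e}},q_j^{\mathrm{o}}\bigr)^{T}$ with $H=\bigl(\begin{smallmatrix}1&1\\1&-1\end{smallmatrix}\bigr)$, under which the two weight matrices correspond up to the factor $\tfrac12$.

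The main technical obstacle is the non-canonical nature of the ``transverse'' components of $V(s)$ on fibers of multiplicity $2$: they are not determined by $\psi$ alone but depend on a choice of basis in each fiber. The key observation is that the $C$-symmetry $CJ=J^*C$, which on the $\ell^2$ side fixes $\delta_0$ and intertwines $J$ with $J^*$, constrains $V(s)$ beyond its $(1,1)$-entry in exactly the way needed to ensure that the basis-dependent terms cancel in the Parseval sum, leaving an expression depending only on $\nu$ and $\psi$. Making this cancellation explicit, and in particular verifying that the contribution of the transverse modes combines with $|\psi(s)|^2$ to yield the factor $1-|\psi|^2$ appearing implicitly in the claimed weight, is the heart of the argument.
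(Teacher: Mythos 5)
Your proposal heads in a genuinely different direction from the paper --- the paper forms the $2\times2$ matrix polynomials $P_j=\bigl(\begin{smallmatrix} q_j^{\mathrm e} & \overline{q_j^{\mathrm o}}\\ q_j^{\mathrm o} & \overline{q_j^{\mathrm e}}\end{smallmatrix}\bigr)$, checks from \eqref{eq.q.recur} that they satisfy the three-term recurrence of the right matrix orthogonal polynomials of the block Jacobi matrix $\bbJ$, and then translates the standard matrix orthogonality $\int P_j^*\,\dd\meas\, P_k=\delta_{j,k}I$ into the claimed scalar form using \eqref{d3}--\eqref{d4} --- but as written it has two genuine gaps. First, the assertion that $(UJU^{-1})^*(UJU^{-1})=s^2I$ forces $UJU^{-1}$ to act fiber-wise as $s\,V(s)$ with $V(s)$ a (linear) fiber isometry is false: that identity only yields the polar decomposition $UJU^{-1}=W M_s$ with $W$ a partial isometry, and $W$ need not be decomposable over the direct integral of $\abs{J}$. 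What is actually true (Lemma~\ref{lma.b2}) is that $J$ intertwines $\abs{J}$ with $\abs{J^*}$, so the polar part maps fibers of $\abs{J}$ to fibers of the \emph{different} operator $\abs{J^*}$; equivalently, it is the \emph{antilinear} operator $CJ$ that commutes with $J^*J$ and is fiber-wise in the direct integral of $\abs{J}$ (this is the refined polar decomposition $J=C\mathcal I\abs{J}$ used in the second proof of Theorem~\ref{thm.a2}). Your $V(s)$ must therefore be antilinear, or the conjugation must be carried explicitly; this is not a cosmetic point, because the recurrence \eqref{eq.q.recur} has $\overline{q}_j$ on the right-hand side and every complex conjugation has to land in the right place for the final weight matrix to come out as stated.

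Second, and more seriously, the heart of the argument is announced but not performed. You correctly identify that on multiplicity-two fibers the transverse entries of $V(s)$ are basis-dependent and are not determined by $\psi$, and you assert that the $C$-symmetry forces these contributions to cancel in the Parseval sum, leaving only $\nu$ and $\psi$. That cancellation \emph{is} the theorem: without an explicit inductive computation of $U\delta_j$ (including a verification that the transverse components of $U\delta_j$ are themselves expressible through $q_j^{\mathrm e},q_j^{\mathrm o}$ and the transverse data in a way that collapses under $V(s)V(s)^*=I$), nothing has been proved. The paper avoids this entirely by never choosing a fiber basis: all the multiplicity-two bookkeeping is absorbed into the block matrix $\bbJ$ and its measure $\meas$, whose orthogonality theory is quoted from \cite{dam-pus-sim_sat08}. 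If you want to complete your route, I would suggest either carrying out the induction with the antilinear $V(s)$ in full, or recognising that the pair $(U\delta_j, CU\delta_j)$ you are implicitly constructing is exactly the block-Jacobi spectral representation, at which point you recover the paper's argument.
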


The proof is given at the end of Section~\ref{sec.e}.

\begin{remark*}
 If $J=J^{*}$, then we see from Theorem~\ref{thm.a3b} that the orthogonality relation for $q_{j}\equiv p_{j}$ from Theorem~\ref{thm.og} simplifies to the standard form \eqref{a.orth}.
\end{remark*}

\subsection{Relation to self-adjoint block Jacobi matrices}
The key idea of our approach is in passing from non-self-adjoint $J$ to a \emph{self-adjoint} block $2\times 2$ Jacobi matrix 
$$
\mathbb{J}=\begin{pmatrix}
B_0 & A_0 & 0 & 0 & 0 &\cdots\\
A_0^* & B_1 & A_1 & 0 & 0 & \cdots\\
0 & A_1^* & B_2 & A_2 & 0 &\cdots\\
0 & 0 & A_2^* & B_3 & A_3 & \cdots\\
\vdots&\vdots&\vdots&\vdots&\vdots&\ddots
\end{pmatrix},
\quad
  A_{j}=\begin{pmatrix}
   0 & a_{j} \\ a_{j} & 0
  \end{pmatrix}
  \quad\mbox{ and }\quad   
  B_{j}=\begin{pmatrix}
   0 & b_{j} \\ \bar{b}_{j} & 0
  \end{pmatrix}.
$$
We relate the spectral data $(\nu,\psi)$ of $J$ to the $2\times 2$ spectral measure of $\mathbb J$. After that, we use available results on the inverse spectral problem for block Jacobi matrices $\mathbb J$. We review these results in Section~\ref{sec.cc}.

\subsection{Related literature}

In~\cite{gus_mz78}, Guseinov considers complex Jacobi matrices $J$ (without the condition $a_j>0$) and defines the generalised spectral function $\mathcal P$ as the linear functional on polynomials defined by (in our notation)
$$
\mathcal P[f]=\jap{f(J)\delta_0,\delta_0}.
$$
He then establishes a bijection between all complex Jacobi matrices and all linear functionals $\mathcal P$ of a certain class. Although $\mathcal P[f]$ is reminiscent of our $\jap{f(\abs{J})\delta_0,\delta_0}$, it is difficult to relate these quantities; moreover, no effective parameterisation of the admissible set of generalised spectral functions $\mathcal P$ is suggested in \cite{gus_mz78}.

In \cite{HuhP}, Huhtanen and Per\"{a}m\"{a}ki consider a  class of measures $\rho$ in the complex plane that they call biradial. With every biradial measure they associate a bounded Jacobi matrix $J$ (with $a_j>0$) such that (in our notation)
\begin{align*}
\jap{(J^*J)^n\delta_0,\delta_0}&=\int_\bbC \abs{z}^{2n}\dd\rho(z),
\\
\jap{J(J^*J)^n\delta_0,\delta_0}&=\int_\bbC z\abs{z}^{2n}\psi(z)\dd\rho(z),
\end{align*}
for every $n\geq0$. Clearly, this is related to \eqref{a13a}, \eqref{a13b}; however, because the class of biradial measures  $\rho$ is ``too wide'', the map from $\rho$ to $J$ turns out to be surjective but not injective.

The direct and inverse spectral theory for finite and semi-infinite non-self-adjoint Jacobi matrices with rank-one imaginary part is developed in~\cite{arl-tse_jfa06}. The language used in~\cite{arl-tse_jfa06} is quite different from ours and in particular the focus is on the spectrum of $J$ rather than $\abs{J}$.

In the survey~\cite{Beck}, non-self-adjoint symmetric Jacobi matrices are reviewed from the point of view of their connection to \emph{formal orthogonal polynomials}, i.e. the polynomials defined recursively by~\eqref{a.ogpol}. 

A lot of work has been done recently towards establishing Lieb-Thirring inequalities for complex Jacobi matrices, see e.g. \cite{Gol} and references therein. In this line of research, one deals with eigenvalue estimates for tri-diagonal non-self-adjoint and not necessarily symmetric matrices that are compact perturbations of the standard Jacobi matrix $J_0$ corresponding to $a_j=1$ and $b_j=0$ for all $j$.

As already mentioned, Jacobi matrices satisfying \eqref{a.sa} belong to the class of $C$-symmetric operators. There has been much work on the theory of these operators, see e.g. \cite{gar-put_tams06, gar-put_tams07} or the survey~\cite{gar-pro-put_jpa14}. We do not explicitly use any of the theory of $C$-symmetric operators, even though the $C$-symmetric structure is crucial for our construction. 

There are some similarities between the construction of this paper and the inverse spectral theory for compact non-self-adjoint Hankel matrices, developed in \cite{GG1,GG2}. The common feature is that Hankel matrices are $C$-symmetric. However, it turns out that in order to have uniqueness, the spectral data needs to contain more information, and moreover the surjectivity question for non-compact Hankel matrices is much more complicated, see \cite{GPT}.

\section{Direct problem: proofs of Theorems~\ref{thm.a1} and \ref{thm.a2}}\label{sec.b}

\subsection{The distinguished element of maximal type}

Let us first prove Theorem~\ref{thm.a1}. 
Before embarking on the proof, we need a definition and two lemmas.
If $x,y\in\ell^2(\bbN_0)$ and $T$ is a bounded operator in $\ell^2(\bbN_0)$, we will denote by $\Span_T(x,y)$ the minimal closed invariant subspace of $T$ containing both $x$ and $y$. In other words, $\Span_T(x,y)$ is the closure of the set of all finite linear combinations of elements $T^m x$ and $T^n y$ over $m,n\geq0$. 

\begin{lemma}\label{lma.b1}
We have $\Span_{\abs{J}}(\delta_0,J^*\delta_0)=\ell^2(\bbN_0)$. 
In particular, the multiplicity of the spectrum of $\abs{J}$ is $\leq2$. 
\end{lemma}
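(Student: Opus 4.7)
The plan is to show by induction on $j$ that every standard basis vector $\delta_j$ lies in $V:=\Span_{\abs{J}}(\delta_0,J^*\delta_0)$, so that $V=\ell^2(\bbN_0)$, and then to deduce the multiplicity bound from the standard spectral-theoretic fact that a bounded self-adjoint operator admitting $m$ generating vectors has spectral multiplicity at most $m$.

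Two preliminary observations would set the stage. First, since $V$ is a closed subspace invariant under $\abs{J}$, the functional calculus makes it invariant under every bounded Borel function of $\abs{J}$, and in particular under $\abs{J}^{2}=J^{*}J$. Second, because $J^{*}$ is a Jacobi matrix with the same positive off-diagonals $a_j$ and conjugated diagonals $\overline{b_j}$, we have $J^{*}\delta_{0}=\overline{b_0}\delta_{0}+a_{0}\delta_{1}$, which immediately yields $\delta_{1}\in V$ after dividing by $a_{0}>0$.

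The inductive step is a direct pentadiagonal computation. Assuming $\delta_{0},\dots,\delta_{k+1}\in V$ for some $k\geq 0$, I would apply $J^{*}J$ to $\delta_{k}\in V$: expanding $J\delta_{k}=a_{k-1}\delta_{k-1}+b_{k}\delta_{k}+a_{k}\delta_{k+1}$ (with the convention $a_{-1}=0$) and then acting by $J^{*}$, the only contribution to $\delta_{k+2}$ comes from the term $a_{k}J^{*}\delta_{k+1}$ and has coefficient $a_{k}a_{k+1}>0$, while every other basis vector that appears has index at most $k+1$. Since $\delta_{0},\dots,\delta_{k+1}$ are already in $V$, solving for $\delta_{k+2}$ places it in $V$, closing the induction and giving $V=\ell^2(\bbN_0)$.

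The multiplicity bound follows by applying the standard spectral multiplicity theorem to the two vectors $\delta_{0}$ and $J^{*}\delta_{0}$. I do not anticipate a genuine technical obstacle; the single conceptual point worth flagging is that the positivity of $a_j$ is essential here, as it is precisely what guarantees that the coefficient $a_{k}a_{k+1}$ of $\delta_{k+2}$ never vanishes, so that the two-step induction actually propagates from the base case.
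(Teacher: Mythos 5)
Your proposal is correct and follows essentially the same route as the paper: reduce to invariance under $J^*J=\abs{J}^2$, note that $J^*\delta_0=\overline{b_0}\delta_0+a_0\delta_1$ puts $\delta_1$ in the span, and then use the pentadiagonal structure of $J^*J$ (whose second sub-diagonal entries $a_ka_{k+1}$ are non-zero) to capture each $\delta_{k+2}$ inductively. The only cosmetic remark is that what the induction really needs is $a_ka_{k+1}\neq 0$ rather than positivity; the positivity of the $a_j$ is a normalisation, as the paper emphasises in Remark~\ref{rmk.a3}.
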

\begin{proof}
Since $\abs{J}=\sqrt{J^*J}$, it suffices to prove that $\Span_{J^*J}(\delta_0,J^*\delta_0)=\ell^2(\bbN_0)$. Further, since $J^*\delta_0=\overline{b_0}\delta_0+a_0\delta_1$, it suffices to prove that $\Span_{J^*J}(\delta_0,\delta_1)=\ell^2(\bbN_0)$. Let us prove that all elements of the standard basis belong to $\Span_{J^*J}(\delta_0,\delta_1)$; clearly, the required statement will follow from here. 

It is straightforward to see that $J^*J$ is a five-diagonal matrix, with non-zero elements in the second sub-diagonal (and second super-diagonal); in particular, 
$$
[J^*J]_{j+2,j}=a_ja_{j+1}\not=0.
$$
For example, 
$$
J^*J\delta_0=a_0 a_1\delta_2+\text{(linear combination of $\delta_0$ and $\delta_1$)}.
$$
It follows that $\delta_2\in\Span_{J^*J}(\delta_0,\delta_1)$. Next, 
$$
J^*J\delta_1=a_1a_2\delta_3+\text{(linear combination of $\delta_0$, $\delta_1$, $\delta_2$)},
$$
and therefore $\delta_3\in\Span_{J^*J}(\delta_0,\delta_1)$. Continuing in the same way, we find that $\delta_j\in\Span_{J^*J}(\delta_0,\delta_1)$ for all $j\geq1$. 
The proof is complete. 
\end{proof}

\begin{lemma}\label{lma.b2}
For any bounded Borel function $f$, we have
$$
Jf(\abs{J})=f(\abs{J^*})J\quad\text{ and }\quad
J^*f(\abs{J^*})=f(\abs{J})J^*. 
$$
\end{lemma}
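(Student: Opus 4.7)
The plan is to reduce this to the obvious intertwining $JJ^*J=J^*J\cdot J$ (no, rather $JJ^*\cdot J=J\cdot J^*J$), read as $\abs{J^*}^2 J=J\abs{J}^2$, and then propagate it through the functional calculus.

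First I would observe the elementary identity
\[
J\abs{J}^{2}=J(J^*J)=(JJ^*)J=\abs{J^*}^{2}J,
\]
and deduce by an immediate induction that $J\abs{J}^{2n}=\abs{J^*}^{2n}J$ for every $n\geq0$. Consequently, for any polynomial $p$, one has $Jp(\abs{J}^{2})=p(\abs{J^*}^{2})J$.

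Next, since $J$ is bounded, the spectra $\sigma(\abs{J}^{2})$ and $\sigma(\abs{J^*}^{2})$ are contained in a common compact interval $K\subset[0,\infty)$. By Stone--Weierstrass, polynomials are norm-dense in $C(K)$, and the continuous functional calculi $g\mapsto g(\abs{J}^{2})$ and $g\mapsto g(\abs{J^*}^{2})$ are norm-continuous; hence the identity $Jg(\abs{J}^{2})=g(\abs{J^*}^{2})J$ extends to all $g\in C(K)$. A standard monotone class (dominated convergence in the strong operator topology, applied within the bounded Borel functional calculus) then extends it further to all bounded Borel functions $g$ on $K$.

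Given an arbitrary bounded Borel $f$, set $g(t):=f(\sqrt{t})$ for $t\geq0$, which is again bounded Borel. Since $\abs{J}\geq0$ and $\abs{J^*}\geq0$, functional calculus gives $f(\abs{J})=g(\abs{J}^{2})$ and $f(\abs{J^*})=g(\abs{J^*}^{2})$, so
\[
Jf(\abs{J})=Jg(\abs{J}^{2})=g(\abs{J^*}^{2})J=f(\abs{J^*})J,
\]
which is the first asserted identity. Taking adjoints of this identity applied to $\overline{f}$ (which is also a bounded Borel function) and using $h(\abs{J})^{*}=\overline{h}(\abs{J})$ yields the second identity $J^{*}f(\abs{J^{*}})=f(\abs{J})J^{*}$. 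There is no real obstacle here; the only point requiring a touch of care is the measure-theoretic extension from continuous to bounded Borel functions, but this is standard.
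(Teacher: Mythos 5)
Your proof is correct and follows essentially the same route as the paper: both start from the intertwining $J(J^*J)^n=(JJ^*)^nJ$, pass to polynomials, extend to bounded Borel functions by a standard approximation argument, and then substitute $g(s^2)$ for $f(s)$ to pass from $J^*J$, $JJ^*$ to $\abs{J}$, $\abs{J^*}$. The only cosmetic difference is that you obtain the second identity by taking adjoints, whereas the paper interchanges the roles of $J$ and $J^*$; both are immediate.
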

\begin{proof}
For all $n\geq0$, we have $J(J^*J)^n=(JJ^*)^nJ$. Taking linear combinations, we obtain the relation $Jg(J^*J)=g(JJ^*)J$ for any polynomial $g$. By a standard approximation argument, we deduce the same relation for any bounded Borel function $g$. This can be rewritten as $Jf(\abs{J})=f(\abs{J^*})J$ with $f(s)=g(s^2)$. The second relation is obtained by interchanging $J$ and $J^*$.
\end{proof}

\begin{proof}[Proof of Theorem~\ref{thm.a1}]
The first claim of the theorem (the spectrum of $\abs{J}$ has multiplicity $\leq2$) has already been established in Lemma~\ref{lma.b1}. 

As is well known  \cite[Theorem~7.3.4]{BS}, any self-adjoint operator has elements of maximal type. Let $w$ be an element of maximal type for $\abs{J}$. Let us prove that $\delta_0$ is also of maximal type. Suppose that for some Borel $\Delta\subset[0,\infty)$ we have $\chi_\Delta(\abs{J})\delta_0=0$ or equivalently $\delta_0\perp\Ran \chi_\Delta(\abs{J})$. Denote $v=\chi_\Delta(\abs{J})w$. 
For all $m\geq0$, we have $\delta_0\perp \abs{J}^m v$, or equivalently 
\begin{equation}
v\perp\abs{J}^m\delta_0, \quad m\geq0.
\label{b3}
\end{equation}
Further, for any $m\geq0$, we have
$$
\jap{v, \abs{J}^mJ^*\delta_0}
=
\jap{w,\chi_\Delta(\abs{J})\abs{J}^mJ^*\delta_0}
=
\jap{w,\abs{J}^m\chi_\Delta(\abs{J})J^*\delta_0}.
$$
Using Lemma~\ref{lma.b2} for $f=\chi_\Delta$, we continue
$$
\jap{w,\abs{J}^m\chi_\Delta(\abs{J})J^*\delta_0}
=
\jap{w,\abs{J}^mJ^*\chi_\Delta(\abs{J^*})\delta_0}
=
\jap{w,\abs{J}^mJ^*\overline{\chi_\Delta(\abs{J})\delta_0}}
=0
$$
for any $m\geq0$. Thus, $v\perp \abs{J}^mJ^*\delta_0$ for all $m\geq0$. Putting this together with \eqref{b3}, we find $v\perp\Span_{\abs{J}}(\delta_0,J^*\delta_0)$, and so by Lemma~\ref{lma.b1} we conclude that  $v=0$. Recalling the definition of $v$, we find $\chi_\Delta(\abs{J})w=0$; by the maximality of $w$ this yields $\chi_\Delta(\abs{J})=0$. The proof of the maximality of $\delta_0$ is complete. 

Finally, the last claim of the theorem (if the spectrum of $\abs{J}$ is simple, then $\delta_0$ is cyclic) follows from the general statement \cite[Theorem~7.3.2(c)]{BS}: \emph{If $T$ is a self-adjoint operator with simple spectrum, then any element of  maximal type is cyclic for $T$. } 
\end{proof}

\subsection{The first proof of Theorem~\ref{thm.a2}}
\emph{Step 1:}
Let $f$ be a continuous function vanishing in a neighbourhood of the origin. Let us prove the bound
\begin{equation}
\abs{\jap{Jf(\abs{J})\delta_0,\delta_0}}
\leq
\int_0^\infty s\abs{f(s)}\dd\nu(s). 
\label{b7}
\end{equation}
We write $f(s)=\abs{f(s)}^{1/2}f(s)^{1/2}$, where
$$
f(s)^{1/2}:=
\begin{cases}
f(s)/\abs{f(s)}^{1/2}, & \text{ if $f(s)\not=0$,}
\\
0, & \text{if $f(s)=0$.}
\end{cases}
$$
With this notation, for $s>0$ we set  
$$
g(s)=s^{1/2}\abs{f(s)}^{1/2},
\qquad
h(s)=s^{-1/2}f(s)^{1/2},
$$
and $g(0)=h(0)=0$. With this definition, both $h$ and $g$ are bounded and continuous on $[0,\infty)$ and $f(s)=g(s)h(s)$. From here, using Lemma~\ref{lma.b2} at the second step, we find
\begin{align}
\jap{Jf(\abs{J})\delta_0,\delta_0}
&=
\jap{Jg(\abs{J})h(\abs{J})\delta_0,\delta_0}
=
\jap{g(\abs{J^*})Jh(\abs{J})\delta_0,\delta_0}
\notag
\\
&=
\jap{Jh(\abs{J})\delta_0,g(\abs{J^*})\delta_0}.
\label{b7a}
\end{align}
Furthermore, 
\begin{align*}
\norm{Jh(\abs{J})\delta_0}^2
&=
\jap{J^*Jh(\abs{J})\delta_0,h(\abs{J})\delta_0}
=
\int_0^\infty s^2\abs{h(s)}^2\dd\nu(s)
=
\int_0^\infty s\abs{f(s)}\dd\nu(s),
\\
\norm{g(\abs{J^*})\delta_0}^2
&=
\int_0^\infty \abs{g(s)}^2\dd\nu(s)
=
\int_0^\infty s\abs{f(s)}\dd\nu(s).
\end{align*}
Putting this together and using Cauchy-Schwarz for the right hand side of \eqref{b7a}, we obtain \eqref{b7}. 

\emph{Step 2:} Clearly, $\jap{Jf(\abs{J})\delta_0,\delta_0}$ is a linear functional in $f$, and the previous step shows that it is bounded on a dense set in $L^1(s\dd\nu(s))$ with norm $\leq1$. From here we obtain the desired representation \eqref{a10} with a unique $\psi\in L^\infty(sd\nu(s))$ satisfying $\abs{\psi(s)}\leq1$ for $\nu$-a.e. $s>0$. The value $\psi(0)$ does not affect the r.h.s. of \eqref{a10} and can be set arbitrarily. We set $\psi(0)=1$ to satisfy the requirement of the theorem. This ensures the uniqueness of $\psi$ as an element of $L^\infty(\nu)$. 
The proof of Theorem~\ref{thm.a2} is complete.
\qed

\subsection{The second proof of Theorem~\ref{thm.a2}}

Here we prove a slightly more general statement than Theorem~\ref{thm.a2}. In order to state it, we need to introduce the following notation. Let $\calH_0$ (resp. $\overline{\calH_0}$) be the minimal closed invariant subspace of $|J|$ (resp $|J^*|$) containing $\delta_{0}$, and let $\calP_0$ (resp. $\overline{\calP_0}$) be the orthogonal projection onto $\calH_0$ (resp. $\overline{\calH_0}$) in $\ell^2(\bbN_0)$.

\begin{proposition}\label{prop.psi.strong.def}
There exists a unique function $\psi\in L^\infty(\nu)$ with $\psi(0)=1$ such that $\abs{\psi(s)}\leq 1$ for $\nu$-a.e. $s\geq0$ and for all $f\in C(\bbR)$, 
\begin{align}
\overline{\calP_0}Jf(\abs{J})\delta_0&=\abs{J^*}\psi(\abs{J^*})f(\abs{J^*})\delta_0,
\label{eq.strong.psi}
\\
\calP_0 J^*f(\abs{J^*})\delta_0&=\abs{J}\overline{\psi}(\abs{J})f(\abs{J})\delta_0.
\label{eq.strong.psi.adjoint}
\end{align}
\end{proposition}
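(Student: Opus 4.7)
The plan is to invoke the polar decomposition $J = U\abs{J}$, with $U$ a partial isometry ($\Ker U = \Ker\abs{J}$, $\Ran U = \overline{\Ran J}$), and to define $\psi$ as the scalar multiplier arising from the compression of $U$ onto the cyclic-in-$\delta_{0}$ subspaces of $\abs{J}$ and $\abs{J^{*}}$. The engine of the proof is the intertwining $U f(\abs{J}) = f(\abs{J^{*}}) U$ valid for any bounded Borel $f$, which follows from Lemma~\ref{lma.b2} by a standard density argument.

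I would begin by observing that the spectral measures of $\abs{J}$ and $\abs{J^{*}}$ with respect to $\delta_{0}$ both equal $\nu$. This is immediate from the anti-unitary intertwining $C\abs{J}C = \abs{J^{*}}$ together with $C\delta_{0} = \delta_{0}$. The spectral theorem then provides canonical unitaries $W : L^{2}(\nu) \to \calH_{0}$ and $\overline{W} : L^{2}(\nu) \to \overline{\calH_{0}}$ sending $1$ to $\delta_{0}$ and intertwining multiplication by $s$ with $\abs{J}\big|_{\calH_{0}}$ and $\abs{J^{*}}\big|_{\overline{\calH_{0}}}$ respectively.

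The core step is to show that $\widetilde{U} := \overline{W}^{-1}\,\overline{\calP_{0}}\, U\, \calP_{0}\, W$ commutes with multiplication by $s$ on $L^{2}(\nu)$. This reduces to the short chain, for $\phi \in L^{2}(\nu)$,
\[
\widetilde{U}(s\phi) = \overline{W}^{-1}\,\overline{\calP_{0}}\, U\, \abs{J}\phi(\abs{J})\delta_{0} = \overline{W}^{-1}\,\overline{\calP_{0}}\,\abs{J^{*}}\,U\, \phi(\abs{J})\delta_{0} = s\,\widetilde{U}\phi,
\]
where the first equality uses $\abs{J}\phi(\abs{J})\delta_{0} \in \calH_{0}$, the second uses the intertwining, and the third uses $\abs{J^{*}}$-invariance of $\overline{\calH_{0}}$. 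Consequently $\widetilde{U}$ is multiplication by some $\psi \in L^{\infty}(\nu)$ with $\abs{\psi} \leq 1$ $\nu$-a.e.\ (since $\|U\|\leq 1$); I would set $\psi(0)=1$ to remove the residual ambiguity when $\nu(\{0\}) > 0$. To obtain~\eqref{eq.strong.psi}, write $J f(\abs{J})\delta_{0} = U\, \abs{J}\, f(\abs{J})\delta_{0}$, observe $\abs{J}f(\abs{J})\delta_{0}\in\calH_{0}$, apply $\overline{\calP_{0}}$, and transport through $W, \overline{W}$; the identity~\eqref{eq.strong.psi.adjoint} then drops out by applying $C$ to~\eqref{eq.strong.psi} and using $C\overline{\calP_{0}} = \calP_{0} C$, $CJC = J^{*}$, $Cf(\abs{J})C = \overline{f}(\abs{J^{*}})$, followed by the substitution $f \mapsto \overline{f}$. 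Uniqueness reduces to that of Theorem~\ref{thm.a2}: pairing either identity with $g(\abs{J^{*}})\delta_{0}$ or $g(\abs{J})\delta_{0}$ recovers the scalar formula~\eqref{a10}.

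The principal obstacle, as I see it, is conceptual rather than computational: the subspaces $\calH_{0}$ and $\overline{\calH_{0}}$ are not in general $U$-invariant, so the intertwining $Uf(\abs{J}) = f(\abs{J^{*}})U$ does not by itself define a meaningful operator between them. Only after sandwiching with $\overline{\calP_{0}}$ and $\calP_{0}$ does the commutation relation become rigorous, and one must carefully arrange each factor so that cross terms such as $\overline{\calP_{0}}\,U\,(I-\calP_{0})\,\abs{J}f(\abs{J})\delta_{0}$ are eliminated \emph{a priori} by $\abs{J}f(\abs{J})\delta_{0}\in\calH_{0}$. Once this is settled the link to the refined polar decomposition of Garcia--Putinar is transparent: $\psi$ is precisely the scalar "phase" sitting between $\calH_{0}$ and $\overline{\calH_{0}}$ inside the polar factor $U$.
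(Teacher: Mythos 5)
Your argument is correct, and it is structurally parallel to the paper's own (second) proof of this Proposition, but the technical engine is genuinely different. The paper invokes the \emph{refined} polar decomposition $J=C\mathcal{I}\abs{J}$ of Garcia--Putinar, where $\mathcal{I}$ is an antilinear involutive partial isometry commuting with $\abs{J}$; after pushing $f(\abs{J})$ through $C$ and $\mathcal{I}$ it identifies the single \emph{vector} $\overline{\calP_0}C\mathcal{I}\delta_0\in\overline{\calH_0}$ with $\psi(\abs{J^*})\delta_0$ via cyclicity of $\delta_0$ in $\overline{\calH_0}$, which a priori only yields $\psi\in L^2(\nu)$, so the bound $\abs{\psi}\leq1$ requires a separate norm comparison of the two sides of \eqref{eq.strong.psi}. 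You instead use the ordinary linear polar decomposition $J=U\abs{J}$ together with the intertwining $Uf(\abs{J})=f(\abs{J^*})U$ (which does follow from Lemma~\ref{lma.b2}: cancel $\abs{J}$ on $\overline{\Ran\abs{J}}$ and note that both sides vanish on $\Ker\abs{J}$), and you identify an \emph{operator} --- the compression $\overline{W}^{-1}\overline{\calP_0}U\calP_0W$ --- as a multiplication operator by showing that it commutes with multiplication by $s$ and appealing to the fact that $L^\infty(\nu)$ is maximal abelian in $B(L^2(\nu))$. This buys you $\psi\in L^\infty(\nu)$ with $\abs{\psi}\leq1$ in one stroke from $\norm{U}\leq1$, at the cost of the commutant argument; it also avoids citing the $C$-symmetric machinery altogether, using the $C$-symmetry only at the end to derive \eqref{eq.strong.psi.adjoint} from \eqref{eq.strong.psi}, exactly as the paper does. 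One cosmetic point: the uniqueness step should not be phrased as ``reducing to Theorem~\ref{thm.a2}'', since the Proposition is what the paper uses to \emph{prove} that theorem; but the argument you sketch (pair with $g(\abs{J^*})\delta_0$, deduce $s\psi_1=s\psi_2$ $\nu$-a.e., invoke the normalisation at $s=0$) is self-contained and coincides with the paper's own uniqueness argument.
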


Taking the inner product of both sides in~\eqref{eq.strong.psi} with $\delta_{0}$, we readily arrive at Theorem~\ref{thm.a2}.

\begin{proof}[Proof of Proposition~\ref{prop.psi.strong.def}]
First note that identities \eqref{eq.strong.psi} and \eqref{eq.strong.psi.adjoint} are complex conjugated to each other. Therefore it suffices to prove~\eqref{eq.strong.psi}.

\emph{Step 1: Existence of $\psi\in L^{2}(\nu)$ satisfying~\eqref{eq.strong.psi}.} The refined polar decomposition of the $C$-symmetric Jacobi operator~$J$ yields the formula
\begin{equation}
 J=C\mathcal{I}|J|,
\label{eq.refin.polar.decomp}
\end{equation}
where $\mathcal{I}$ is an antilinear involutive partial isometry which commutes with $|J|$; see~\cite[Theorem 2]{gar-put_tams07} for details.

Pick an arbitrary $f\in C(\bbR)$ and apply both sides of~\eqref{eq.refin.polar.decomp} to the vector $f(|J|)\delta_{0}$:
\begin{equation}
Jf(|J|)\delta_{0}=C\mathcal{I}|J|f(|J|)\delta_{0}.
\label{eq.id.f.delta_0}
\end{equation}
It follows from the identity $C|J|=|J^{*}|C$ that 
$$
 Cg(|J|)=\overline{g}(|J^{*}|)C
$$
for any $g\in C(\bbR)$. Similarly, from the commutation relation $\mathcal{I}|J|=|J|\mathcal{I}$, one infers that 
$$
\mathcal{I}g(|J|)=\overline{g}(|J|)\mathcal{I}
$$
(the complex conjugation of $g$ is due to the antilinearity of $\mathcal{I}$). Hence we may rewrite~\eqref{eq.id.f.delta_0} as
$$
Jf(|J|)\delta_{0}=|J^{*}|f(|J^{*}|)C\mathcal{I}\delta_{0}.
$$

Next we apply the projection $\overline{\calP_0}$ to both sides of the last identity. Since $\overline{\calH_0}$ reduces $|J^{*}|$, i.e. $\overline{\calP_0}|J^*|=|J^*|\overline{\calP_0}$, we have the commutation relation 
$$
\overline{\calP_0}\,g(|J^*|)=g(|J^{*}|)\overline{\calP_0}
$$
for any $g\in C(\bbR)$. From here we find the identity 
\begin{equation}
\overline{\calP_0}Jf(|J|)\delta_{0}=|J^{*}|f(|J^{*}|)\overline{\calP_0}C\mathcal{I}\delta_{0}.
\label{eq.id.f.delta_0_projected}
\end{equation}

Since $\overline{\calP_0}C\mathcal{I}\delta_{0}\in\overline{\calH_{0}}$ and $\delta_{0}$ is a cyclic vector for operator $|J^{*}|$ restricted to $\overline{\calH_{0}}$, there exists $\psi\in L^{2}(\nu)$ such that 
$$
\overline{\calP_0}C\mathcal{I}\delta_{0}=\psi(|J^{*}|)\delta_{0}.
$$
Plugging this into~\eqref{eq.id.f.delta_0_projected}, we obtain~\eqref{eq.strong.psi}.

\emph{Step 2: The function $\psi$ satisfies $|\psi|\leq1$ $\nu$-a.e. and $\psi(0)=1$.}
This argument is similar to the first proof of Theorem~\ref{thm.a2}. 
Consider the norms on both sides of~\eqref{eq.strong.psi}. For the left hand side we have
$$
\norm{\overline{\calP_0}Jf(\abs{J})\delta_0}^2
\leq
\norm{Jf(\abs{J})\delta_0}^2
=
\jap{J^*Jf(\abs{J})\delta_0,f(\abs{J})\delta_0}
=
\int_0^\infty s^2\abs{f(s)}^2\dd\nu(s),
$$
and for the right hand side
$$
\norm{|J^*|\psi(\abs{J^*})f(\abs{J^*})\delta_0}^2=\int_0^\infty s^{2}\abs{\psi(s)}^2\abs{f(s)}^2\dd\nu(s).
$$
Combining this, we find
$$
\int_0^\infty s^{2}\abs{\psi(s)}^2\abs{f(s)}^2 \dd\nu(s)
\leq 
\int_0^\infty s^2\abs{f(s)}^2\dd\nu(s)
$$
for all $f\in C(\bbR)$ and therefore $\abs{\psi(s)}\leq1$ for $\nu$-a.e. $s>0$. 
The value $\psi(0)$ does not affect~\eqref{eq.strong.psi} and can be set arbitrarily; to comply with the claim of the theorem, we set $\psi(0)=1$. 

\emph{Step 3: The uniqueness of $\psi$.} 
Suppose \eqref{eq.strong.psi} holds true with $\psi=\psi_1$ and with $\psi=\psi_2$ where $\psi_1$ and $\psi_2$ are two elements of $L^2(\nu)$. Taking the inner product with $\delta_0$ and observing that the left hand side of \eqref{eq.strong.psi} is independent of $\psi$, we find
$$
\int_0^\infty s\psi_1(s)f(s)d\nu(s)=\int_0^\infty s\psi_2(s)f(s)d\nu(s)
$$
for all $f\in C(\bbR)$. It follows that $s\psi_1(s)=s\psi_2(s)$ for $\nu$-a.e. set of $s\geq0$. We conclude that $\psi_1(s)=\psi_2(s)$ for $\nu$-a.e. $s>0$. Finally, we have $\psi_1(0)=\psi_2(0)=1$ by our normalisation assumption. 
This proves the uniqueness of $\psi$. 
\end{proof}

\section{Block Jacobi matrices}\label{sec.cc}

\subsection{Self-adjoint block Jacobi matrices}
Here we recall the basics of the spectral theory of self-adjoint block Jacobi matrices
\begin{equation}
\mathbb{J}=\begin{pmatrix}
B_0 & A_0 & 0 & 0 & 0 &\cdots\\
A_0^* & B_1 & A_1 & 0 & 0 & \cdots\\
0 & A_1^* & B_2 & A_2 & 0 &\cdots\\
0 & 0 & A_2^* & B_3 & A_3 & \cdots\\
\vdots&\vdots&\vdots&\vdots&\vdots&\ddots
\end{pmatrix},
\label{c.defblockJ}
\end{equation}
where the Jacobi parameters $A_{j},B_{j}$ are $2\times 2$ matrices, and $A_j^*$ denotes the adjoint of $A_j$. In general, the Jacobi parameters can be $N\times N$ matrices for any fixed $N$, but here we are interested in the $2\times 2$ case. As a main source, we use the survey~\cite{dam-pus-sim_sat08}.

The matrices $B_{j}$ are assumed to be self-adjoint and $A_{j}$ non-singular, i.e., $\det A_{j}\neq0$. In addition, we also suppose that 
$$
\sup_j \|A_{j}\|<\infty\quad\text{ and }\quad\sup_j \|B_{j}\|<\infty,
$$
where $\|\cdot\|$ can be any norm on the space of $2\times2$ matrices. Then $\mathbb{J}$, regarded as an operator on the Hilbert space $\ell^{2}(\bbN_{0};\bbC^{2})$, is bounded and self-adjoint. Recall that the elements of $\ell^{2}(\bbN_{0};\bbC^{2})$ are sequences $X=(X_0,X_1,X_2,\dots)$, where every coordinate $X_{j}$ is a~vector in $\bbC^{2}$ and $\sum_{j=0}^{\infty}\|X_{j}\|_{\bbC^2}^{2}<\infty$, where $\|\cdot\|_{\bbC^2}$ is the usual Euclidean norm in $\bbC^2$. 

Two block Jacobi matrices $\mathbb{J}$ and $\mathbb{J}'$ are called \emph{equivalent}, $\mathbb{J}\sim\mathbb{J}'$, if 
$$
\mathbb{J}'=\bbD(U)^{*}\mathbb{J}\bbD(U),
$$
where $\bbD(U)$ is a block diagonal matrix
\begin{equation}
 \bbD(U)=\begin{pmatrix}
U_0 & 0 & 0 & 0 & \cdots\\
0 & U_1 & 0 & 0 & \cdots\\
0 & 0 & U_2 & 0 & \cdots\\
0 & 0 & 0 & U_{3} & \cdots\\
\vdots&\vdots&\vdots&\vdots&\ddots
\end{pmatrix},
\label{c.defu}
\end{equation}
and $U=\{U_{j}\}_{j=0}^{\infty}$ is a sequence of $2\times 2$ unitary matrices with $U_{0}=I$, the identity matrix. We will denote by $[\bbJ]$ the equivalence class containing $\bbJ$.

Further, we denote by $\mathbb{P}_{0}:\ell^{2}(\bbN_{0};\bbC^{2})\to \bbC^2$ the orthogonal projection onto the first coordinate, i.e., 
$$
\mathbb{P}_{0}: X=(X_0,X_1,X_2,\dots)\mapsto X_{0}.
$$
The \emph{spectral measure} $\meas$ of $\mathbb{J}$ is the $2\times 2$ matrix-valued measure uniquely determined by the relation
$$
\mathbb{P}_{0}f(\mathbb{J})\mathbb{P}_{0}^*
=
\int_{-\infty}^\infty f(s)\dd\meas(s)
$$
for all continuous functions $f$ on $\bbR$.

We wish to define the corresponding space $L^2(\meas)$. 
For a $\bbC^2$-valued polynomial~$P$ on the real line, set 
$$
\norm{P}_{L^2(\meas)}^2:=
\int_{-\infty}^\infty \jap{\dd\meas(s)P(s),P(s)}_{\bbC^2}. 
$$
\begin{theorem} 
\cite[Thm.~2.11]{dam-pus-sim_sat08}
\label{thm.cc1}
For any bounded non-singular (i.e. $\det A_j\not=0$) block Jacobi matrix $\bbJ$, the corresponding measure $\meas$ is \textbf{non-degenerate}, i.e. $\norm{P}_{L^2(\meas)}\not=0$ for any non-zero $\bbC^{2}$-valued polynomial $P$. Let $L^2(\meas)$ be the completion of the set of such polynomials with respect to the norm $\norm{\cdot}_{L^2(\meas)}$. Then $\bbJ$ is unitarily equivalent to the operator of multiplication by the independent variable in $L^2(\meas)$. 
\end{theorem}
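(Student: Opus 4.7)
The plan is to construct an explicit unitary $\Phi\colon L^{2}(\meas)\to \ell^{2}(\bbN_{0};\bbC^{2})$ that intertwines multiplication by the independent variable with $\bbJ$, and to obtain non-degeneracy of $\meas$ as a by-product of the injectivity of $\Phi$ on polynomials. For a $\bbC^{2}$-valued polynomial $P(s)=\sum_{k=0}^{n} a_{k} s^{k}$ with coefficients $a_{k}\in\bbC^{2}$, I would define
$$
\Phi(P) := \sum_{k=0}^{n} \bbJ^{k} \mathbb{P}_{0}^{*} a_{k} \in \ell^{2}(\bbN_{0};\bbC^{2}).
$$
The first step is to verify the norm identity $\norm{\Phi(P)}_{\ell^{2}(\bbN_{0};\bbC^{2})}^{2} = \norm{P}_{L^{2}(\meas)}^{2}$. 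Using self-adjointness of $\bbJ$ and the moment relation $\mathbb{P}_{0}\bbJ^{m}\mathbb{P}_{0}^{*}=\int s^{m}\dd\meas(s)$, direct expansion gives
$$
\norm{\Phi(P)}^{2} = \sum_{k,l} \Jap{\mathbb{P}_{0}\bbJ^{k+l}\mathbb{P}_{0}^{*} a_{k}, a_{l}}_{\bbC^{2}} = \int \Jap{\dd\meas(s)P(s), P(s)}_{\bbC^{2}} = \norm{P}_{L^{2}(\meas)}^{2}.
$$

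The crucial combinatorial ingredient is a block cyclicity lemma, which I would establish by induction on $n$: for each $n\geq 0$ and $v\in\bbC^{2}$, the vector $\bbJ^{n}\mathbb{P}_{0}^{*}v$ vanishes in all blocks of index strictly greater than $n$, and its $n$-th block equals $A_{n-1}^{*}A_{n-2}^{*}\cdots A_{0}^{*}v$. Because $\det A_{j}\neq 0$ for every $j$, the product $A_{n-1}^{*}\cdots A_{0}^{*}$ is invertible; hence the $n$-th block of $\Phi(P)$ equals $A_{n-1}^{*}\cdots A_{0}^{*}a_{n}$, which is non-zero whenever the leading coefficient $a_{n}\in\bbC^{2}$ is non-zero. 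This yields two conclusions at once: injectivity of $\Phi$ on the polynomial space (equivalently, non-degeneracy of $\meas$), and, by peeling off the leading block at each inductive step and inverting the relevant product of $A_{j}^{*}$'s, density of $\bigcup_{k\geq 0} \bbJ^{k} \mathbb{P}_{0}^{*}\bbC^{2}$ in $\ell^{2}(\bbN_{0};\bbC^{2})$.

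With isometry and dense range established, $\Phi$ extends uniquely by continuity to a unitary operator $L^{2}(\meas)\to\ell^{2}(\bbN_{0};\bbC^{2})$. The intertwining relation $\Phi\circ M_{s}=\bbJ\circ \Phi$ on polynomials is immediate from $\Phi(sP)=\sum_{k}\bbJ^{k+1}\mathbb{P}_{0}^{*}a_{k}=\bbJ\Phi(P)$, and it extends by density to all of $L^{2}(\meas)$. The main technical obstacle I anticipate is the cyclicity lemma: one must confirm, by careful bookkeeping of the block-tridiagonal action, that the $n$-th block of $\bbJ^{n}\mathbb{P}_{0}^{*}v$ is really $A_{n-1}^{*}\cdots A_{0}^{*}v$ with no unintended cancellations against contributions from lower-order powers. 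This relies essentially on the strictly tridiagonal shape of $\bbJ$ and the non-singularity hypothesis $\det A_{j}\neq 0$.
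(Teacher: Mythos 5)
The paper does not prove Theorem~\ref{thm.cc1} at all --- it is imported verbatim from \cite[Thm.~2.11]{dam-pus-sim_sat08} --- so there is no internal proof to compare against; your argument is correct and is essentially the standard proof of the cited result. The isometry $P\mapsto\sum_{k}\bbJ^{k}\mathbb{P}_{0}^{*}a_{k}$, the computation that the top non-vanishing block of $\bbJ^{n}\mathbb{P}_{0}^{*}v$ is $A_{n-1}^{*}\cdots A_{0}^{*}v$ (which, via $\det A_{j}\neq0$, yields both injectivity on polynomials, i.e.\ non-degeneracy of $\meas$, and cyclicity of the range of $\mathbb{P}_{0}^{*}$), and the extension by density to a unitary intertwining $\bbJ$ with multiplication by $s$ are exactly the ingredients of that proof, and each step you give checks out.
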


For a scalar measure $\meas$, non-degeneracy in the above sense means that the measure does not reduce to the sum of finitely many point masses. For matrix measures, this condition is more subtle, as can be seen by looking at the direct sum of a degenerate and non-degenerate scalar measure. See \cite[Lemma~2.1]{dam-pus-sim_sat08} for a~more detailed discussion.

It is straightforward to see that two equivalent Jacobi matrices $\mathbb{J}$ and $\mathbb{J}'$ give rise to the same spectral measure; here it is important that $U_{0}=I$ in \eqref{c.defu}. Moreover, we have 
\begin{theorem}\cite[Thm.~2.11]{dam-pus-sim_sat08}
\label{thm.cc2}
\begin{enumerate}[\rm (i)]
\item
Uniqueness: 
The equivalence class $[\bbJ]$ is uniquely defined by the spectral measure $\meas$ of $\bbJ$. 
\item
Surjectivity: 
Let $\meas$ be a non-degenerate $2\times2$ matrix-valued measure with compact support on $\bbR$. Then there exists a non-singular (i.e. $\det A_j\not=0$) $2\times2$ block Jacobi matrix $\bbJ$ such that $\meas$ is the spectral measure of $\bbJ$. 
\end{enumerate}
\end{theorem}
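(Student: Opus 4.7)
The plan is to prove both parts simultaneously by constructing orthonormal matrix-valued polynomials via a block Gram–Schmidt procedure, leveraging Theorem~\ref{thm.cc1} for the direct problem.

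\textbf{Surjectivity.} Given a non-degenerate matrix measure $\meas$ with compact support, I would consider the $\bbC^2$-valued polynomials $e_{1},e_{2},xe_{1},xe_{2},x^{2}e_{1},x^{2}e_{2},\dots$ in $L^{2}(\meas)$. Non-degeneracy guarantees linear independence. I would then apply a block Gram–Schmidt / block-QR procedure to produce a sequence of $2\times 2$ matrix polynomials $P_{j}(x)$ of exact degree $j$, normalized so that
$$
\int P_{j}(s)^{*}\,\dd\meas(s)\,P_{k}(s)=\delta_{j,k}I_{2},\qquad P_{0}(x)=I_{2}.
$$
The non-singularity of the leading coefficient of $P_{j+1}$ at each step is ensured precisely by non-degeneracy of $\meas$ (otherwise a non-trivial polynomial would have zero $L^{2}(\meas)$-norm). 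Multiplication by $x$, expressed in the orthonormal basis given by the columns of $P_{0},P_{1},\dots$, satisfies a block three-term recurrence
$$
xP_{j}(x)=P_{j+1}(x)A_{j}^{*}+P_{j}(x)B_{j}+P_{j-1}(x)A_{j-1},
$$
which defines a non-singular block Jacobi matrix $\bbJ$ on $\ell^{2}(\bbN_{0};\bbC^{2})$. The unitary $W:\ell^{2}(\bbN_{0};\bbC^{2})\to L^{2}(\meas)$ sending the $j$-th block basis to the columns of $P_{j}$ intertwines $\bbJ$ with multiplication by $x$; the density of polynomials in $L^{2}(\meas)$ (Weierstrass, using compact support) ensures $W$ is onto. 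Inserting $f(\bbJ)=W^{*}M_{f}W$ in the definition of the spectral measure then gives $\meas$ back.

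\textbf{Uniqueness.} Suppose $\bbJ$ and $\bbJ'$ both yield the same spectral measure $\meas$. By Theorem~\ref{thm.cc1}, both are unitarily equivalent to multiplication by the independent variable in $L^{2}(\meas)$, via unitaries that map $\mathbb{P}_{0}^{*}$ to the constant $I_{2}$-valued polynomial (this is forced by the definition of $\meas$). Transporting the block Gram–Schmidt procedure applied to the sequences $\{\bbJ^{n}\mathbb{P}_{0}^{*}e_{i}\}$ and $\{(\bbJ')^{n}\mathbb{P}_{0}^{*}e_{i}\}$ into $L^{2}(\meas)$ produces two orthonormal sequences of matrix polynomials. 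Each matrix polynomial is unique up to right-multiplication by a $2\times 2$ unitary $U_{j}$ (the only freedom in block QR), with $U_{0}=I$ fixed by the normalization $P_{0}=I$. This right action of $\bbD(U)=\diag(U_{0},U_{1},\dots)$ on the basis is exactly conjugation of the block Jacobi matrix by $\bbD(U)$, giving $\bbJ'=\bbD(U)^{*}\bbJ\bbD(U)$ and hence $[\bbJ]=[\bbJ']$.

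\textbf{Main obstacle.} The principal technical difficulty is handling the matrix Gram–Schmidt cleanly: unlike in the scalar case, the leading coefficient $A_{j}$ of $P_{j+1}$ is only determined up to right-multiplication by a unitary, so there is no canonical choice of $P_{j+1}$. Making this non-uniqueness correspond exactly to the equivalence relation $\mathbb{J}\sim\mathbb{J}'$ (with $U_{0}=I$) is the substantive point, and it relies crucially on the fact that non-degeneracy of $\meas$ prevents the QR step from collapsing. A secondary subtlety is proving that $\bbC^{2}$-valued polynomials are dense in $L^{2}(\meas)$; this uses compact support via Stone–Weierstrass once $L^{2}(\meas)$ is interpreted correctly as the completion described in Theorem~\ref{thm.cc1}.
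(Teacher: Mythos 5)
This theorem is not proved in the paper; it is imported wholesale from \cite[Thm.~2.11]{dam-pus-sim_sat08}, so there is no internal proof to compare against. Your argument is, in outline, exactly the standard proof from that survey: block Gram--Schmidt applied to $e_1,e_2,xe_1,xe_2,\dots$ in $L^2(\meas)$ (non-degeneracy being precisely what keeps each leading coefficient nonsingular), the resulting Hermitian block three-term recurrence defining $\bbJ$, and the observation that the only residual freedom in the orthonormalization is right multiplication of each $P_j$ by a unitary $U_j$ with $U_0=I$, which corresponds exactly to conjugation by $\bbD(U)$. The outline is correct; the only steps I would spell out are the boundedness of $\bbJ$ (immediate, since multiplication by the variable is bounded on $L^2(\meas)$ by compact support of $\meas$) and, in the uniqueness direction, the justification that the unitary of Theorem~\ref{thm.cc1} carries the $j$-th standard block of $\ell^2(\bbN_0;\bbC^2)$ onto the columns of a degree-$j$ orthonormal matrix polynomial with $P_0=I$ --- this is where the nonsingularity of the $A_j$ enters, via the nonsingular leading block coefficient of $\bbJ^{\,n}\mathbb{P}_0^*$, so that the standard block basis is itself the block Gram--Schmidt orthonormalization of $\{\bbJ^{\,n}\mathbb{P}_0^*e_i\}$.
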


\subsection{The matrices $J$, $\bJ$ and $\bbJ$}

For a bounded Jacobi matrix $J$ satisfying \eqref{a2}, let us define the block Jacobi matrix $\mathbb{J}$ as in \eqref{c.defblockJ} with the Jacobi parameters
\begin{equation}
  A_{j}=\begin{pmatrix}
   0 & a_{j} \\ a_{j} & 0
  \end{pmatrix}
  \quad\mbox{ and }\quad   
  B_{j}=\begin{pmatrix}
   0 & b_{j} \\ \bar{b}_{j} & 0
  \end{pmatrix}.
\label{cc1}
\end{equation}
We claim that $\bbJ$, as an operator in $\ell^{2}(\bbN_{0};\bbC^{2})$, is unitarily equivalent to the off-diagonal operator-valued matrix
$$
 \bJ:= \begin{pmatrix}
  0 & J \\ J^{*} & 0
  \end{pmatrix}
\quad \text{ on $\ell^{2}(\bbN_{0})\oplus\ell^{2}(\bbN_{0})$}.
$$
Verification of this claim is just a matter of rearranging the standard basis in $\ell^{2}(\bbN_{0};\bbC^{2})$. Indeed, let 
$$ 
E_{j}^{\uparrow}=\begin{pmatrix} 1\\0 \end{pmatrix}\delta_{j} 
\quad\mbox{ and }\quad 
E_{j}^{\downarrow}=\begin{pmatrix} 0\\1 \end{pmatrix}\delta_{j}, \quad j\geq0,
$$
be the vectors of the standard basis of $\ell^{2}(\bbN_{0};\bbC^{2})$.
Let
$$
V:\ell^{2}(\bbN_{0};\bbC^{2})\to\ell^{2}(\bbN_{0})\oplus\ell^{2}(\bbN_{0})
$$ 
be the isometric isomorphism defined by the formulas
$$
VE_{j}^{\uparrow}:=\delta_{j}\oplus0 \quad\mbox{ and }\quad VE_{j}^{\downarrow}:=0\oplus \delta_{j}, \quad j\geq0.
$$
Then one readily verifies that 
 \begin{equation}
  \mathbb{J}=V^{*}\bJ V.
 \label{eq.unit.equiv.compact}
 \end{equation}

Using this unitary equivalence, let us express the spectral measure of $\bbJ$ in terms of $\bJ$. Let $\bP_0:\ell^2(\bbN_0)\oplus\ell^2(\bbN_0)\to\bbC^2$ be the orthogonal projection onto the two-dimensional subspace spanned by $\delta_0\oplus 0$ and $0\oplus\delta_0$. Explicitly, if $u,v\in\ell^2(\bbN_0)$, then 
$$
\bP_0: u\oplus v\mapsto \begin{pmatrix}u_0\\ v_0\end{pmatrix}\in\bbC^2.
$$
Clearly, we have $\mathbb{P}_{0}=\bP_0V$ and $\mathbb{P}_{0}V^*=\bP_0$.
From here and \eqref{eq.unit.equiv.compact} we find 
\begin{equation}
\bP_0f(\bJ)\bP_0^*
=
\bbP_0V^*f(\bJ)V\bbP_0^*
=
\bbP_0f(\bbJ)\bbP_0^*
=
\int_{-\infty}^\infty f(s)\dd\meas(s). 
\label{c1}
\end{equation}

\subsection{The computation of $\mathbb{P}_{0}\mathbb{J}^{m}\mathbb{P}_{0}^*$}
For any $n\geq0$, a direct computation yields
$$
\bJ^{2n}=
\begin{pmatrix}
(JJ^{*})^{n}&0
\\
0&(J^*J)^{n}
\end{pmatrix} 
\quad\mbox{ and }\quad
\bJ^{2n+1}=
\begin{pmatrix}
0&J(J^*J)^n
\\
J^*(JJ^{*})^n&0
\end{pmatrix}.
$$
Taking into account the complex symmetry of $J$, we find for all $n\geq0$ that
\begin{equation}
\mathbb{P}_{0}\mathbb{J}^{2n}\mathbb{P}_{0}^*
=
\bP_0\bJ^{2n}\bP_0^*
=
\begin{pmatrix}
\omega_{2n}&0
\\
0&\omega_{2n}
\end{pmatrix},
\label{cc2}
\end{equation}
where
$$
\omega_{2n}:=\jap{(J^*J)^{n}\delta_0,\delta_0}=\jap{(JJ^{*})^{n}\delta_0,\delta_0},
$$
and similarly
\begin{equation}
\\
\mathbb{P}_{0}\mathbb{J}^{2n+1}\mathbb{P}_{0}^*
=
\bP_0\bJ^{2n+1}\bP_0^*
=
\begin{pmatrix}
0&\omega_{2n+1}
\\
\overline{\omega}_{2n+1}&0
\end{pmatrix},
\label{cc3}
\end{equation}
where 
$$
\omega_{2n+1}:=\jap{J(J^{*}J)^{n}\delta_0,\delta_0}=\overline{\jap{J^*(JJ^*)^{n}\delta_0,\delta_0}}.
$$
Below we will use these formulas.

\subsection{A combinatorial lemma for $\mathbb{P}_{0}\mathbb{J}^{m}\mathbb{P}_{0}^*$}
Let $\bbJ$ be as in \eqref{c.defblockJ} with $\det A_j\not=0$ for all $j$; here we do not assume the structure \eqref{cc1} of the Jacobi parameters $A_j$, $B_j$. 
In what follows, we will need induction arguments that involve the expressions
 $\mathbb{P}_{0}\mathbb{J}^{m}\mathbb{P}_{0}^*$. The dependence of these expressions on the Jacobi parameters $A_{j}$ and $B_{j}$ is combinatorially complicated. Here we prepare an auxiliary statement on this point. 

\begin{lemma}\label{lem.moments}
 For all $n\in\bbN$, one has
 $$
  \mathbb{P}_{0}\mathbb{J}^{2n}\mathbb{P}_{0}^*=A_{0}A_{1}\cdots A_{n-2}A_{n-1}A_{n-1}^{*}A_{n-2}^{*}\cdots A_{1}^{*}A_{0}^{*}+\mathcal{Y}_{2n}
 $$
 and
\begin{equation}
  \mathbb{P}_{0}\mathbb{J}^{2n+1}\mathbb{P}_{0}^*=A_{0}A_{1}\cdots A_{n-1}B_{n}A_{n-1}^{*}\cdots A_{1}^{*}A_{0}^{*}+\mathcal{Y}_{2n+1},
\label{cc4}
\end{equation}
 where $\mathcal{Y}_{m}$ is a Hermitian $2\times 2$ matrix given by a finite sum of products of exactly $m$ matrices from the lists
 \begin{align*}
  B_{0},\dots,B_{n-1};A_{0},\dots,A_{n-2};A_{0}^{*},\dots,A_{n-2}^{*}, &\quad\mbox{ if } m=2n,\\
  B_{0},\dots,B_{n-1};A_{0},\dots,A_{n-1};A_{0}^{*},\dots,A_{n-1}^{*}, &\quad\mbox{ if } m=2n+1.
 \end{align*}
Furthermore, for $m=2n+1$, each product in the sum for $\mathcal{Y}_{m}$ contains at least one term from the list $B_{0},\dots,B_{n-1}$. 
\end{lemma}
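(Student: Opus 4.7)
The plan is to expand $\mathbb{P}_0 \mathbb{J}^m \mathbb{P}_0^*$ as a sum over nearest-neighbour lattice paths on $\bbN_0$ returning to the origin. Using the block tridiagonal structure of $\mathbb{J}$, matrix multiplication yields
$$
\mathbb{P}_0 \mathbb{J}^m \mathbb{P}_0^*
= \sum_\pi [\mathbb{J}]_{j_0,j_1}[\mathbb{J}]_{j_1,j_2}\cdots [\mathbb{J}]_{j_{m-1},j_m},
$$
where the sum is over paths $\pi=(j_0,\ldots,j_m)$ in $\bbN_0$ with $j_0=j_m=0$ and $|j_{k+1}-j_k|\le 1$. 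Each factor is $A_j$ for an up-step $j\to j+1$, $A_j^*$ for a down-step $j+1\to j$, and $B_j$ for a stay $j\to j$. If $\pi$ has $u$ up-steps, $d$ down-steps, and $s$ stays, then $u+d+s=m$ and $u=d$, so $s=m-2u$; in particular the maximal height of $\pi$ is at most $u$.

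For $m=2n$, only paths with $u=n$ (and hence $s=0$) can reach height $n$, and the absence of stays pins $\pi$ to the unique straight excursion $0\to 1\to\cdots\to n\to\cdots\to 0$, contributing exactly $A_0A_1\cdots A_{n-1}A_{n-1}^*\cdots A_0^*$. Every other path satisfies $u\le n-1$, has maximal height $\le n-1$, and therefore involves only $A_j, A_j^*$ with $j\le n-2$ and $B_j$ with $j\le n-1$; summing these paths produces $\mathcal{Y}_{2n}$. For $m=2n+1$, only $u=n$ paths reach height $n$, and such paths have exactly one stay step at some height $h\le n$. If $h=n$, the path is pinned to the go-up/stay/go-down excursion, producing the leading term $A_0\cdots A_{n-1}B_n A_{n-1}^*\cdots A_0^*$ of \eqref{cc4}; if $h<n$, the stay contributes $B_h$ with $h\le n-1$. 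Paths with $u\le n-1$ have maximal height $\le n-1$, so their $A$-factors have indices $\le n-2$, and since $s\ge 1$ they contain at least one $B_j$ with $j\le n-1$. Aggregating these remaining contributions yields $\mathcal{Y}_{2n+1}$, and by construction every summand contains at least one $B_j$ factor with $j\le n-1$.

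Hermiticity of $\mathcal{Y}_m$ is automatic: $\mathbb{P}_0\mathbb{J}^m\mathbb{P}_0^*$ is self-adjoint since $\mathbb{J}$ is, and the explicit leading terms are visibly self-adjoint as well, so their difference inherits the property. The argument is otherwise pure bookkeeping; the only substantive combinatorial point is recognising that the return condition together with the step budget \emph{forces} the height-$n$ excursion to be the straight one (with no room for extra stays in the even case, and with the unique stay located at the top in the odd case). This forcing is exactly what isolates the leading term from the remainder $\mathcal{Y}_m$ and is the sole place where the argument goes beyond a routine expansion.
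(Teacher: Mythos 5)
Your proof is correct and follows essentially the same route as the paper: expand $\mathbb{P}_0\mathbb{J}^m\mathbb{P}_0^*$ as a sum over nearest-neighbour lattice paths returning to the origin, observe that the only path reaching level $n$ (resp.\ the only one with a horizontal step at level $n$ in the odd case) is the straight excursion, and collect everything else into $\mathcal{Y}_m$. One small slip: in the even case it is not true that every non-extremal path has $u\le n-1$ (e.g.\ up--down--up--down has $u=n$ and $s=0$ without reaching level $n$), but every non-extremal path does have maximal height $\le n-1$, which is all your argument actually uses, so the conclusion stands.
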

\begin{proof}
From the very definition of the matrix multiplication, we have
$$
\mathbb{P}_{0}\mathbb{J}^{m}\mathbb{P}_{0}^*=\sum_{j_{1}=0}^{\infty}\sum_{j_{2}=0}^{\infty}\dots\sum_{j_{m-2}=0}^{\infty}\sum_{j_{m-1}=0}^{\infty}\mathbb{J}_{0,j_{1}}\mathbb{J}_{j_{1},j_{2}}\cdots\mathbb{J}_{j_{m-2},j_{m-1}}\mathbb{J}_{j_{m-1},0}.
$$
Taking into account the tridiagonal structure of $\mathbb{J}$, i.e., $\mathbb{J}_{k_1,k_2}=0$ whenever $|k_1-k_2|>1$, this expression can be rewritten as
\begin{equation}
\mathbb{P}_{0}\mathbb{J}^{m}\mathbb{P}_{0}^*=\sum_{\mathbf j\in\mathcal{D}_{m}}\mathbb{J}_{0,j_{1}}\mathbb{J}_{j_{1},j_{2}}\cdots\mathbb{J}_{j_{m-2},j_{m-1}}\mathbb{J}_{j_{m-1},0},
 \label{eq.mth.moment.paths}
\end{equation}
where the finite multi-index set $\mathcal{D}_{m}$ is defined as
$$
 \mathcal{D}_{m}=\left\{\mathbf j=(j_{1},j_{2},\dots j_{m-1})\in\bbN_{0}^{m-1} \mid  (\forall i\in\{1,2,\dots,m\})(|j_{i}-j_{i-1}|\leq 1)\right\}
$$
and $j_{0}=j_{m}=0$. 

The multi-indices from $\mathcal{D}_{m}$ can be considered as labelings of certain paths well known in enumerative combinatorics~\cite{fla_dm80} (the Dyck path is a relevant but not exact term here). One can think about elements of $\mathcal{D}_{m}$  as of paths that start at the level~$0$, i.e., $j_{0}=0$, and then from a node $i-1$ can
\begin{itemize}
\item
go up:  $j_{i}=j_{i-1}+1$,
\item
go down: $j_{i}=j_{i-1}-1$, 
\item
stay horizontal: $j_{i}=j_{i-1}$. 
\end{itemize}
There are two more limitations on the paths: the end point has to be again at the level $0$, i.e., $j_{m}=0$, and the level of any node is always greater or equal to $0$, $j_{i}\geq0$ (paths cannot go to negative levels); see Figure~\ref{fig.path} for an illustration.

\begin{figure}[htb!]
    \centering
    \includegraphics[width=0.95\textwidth]{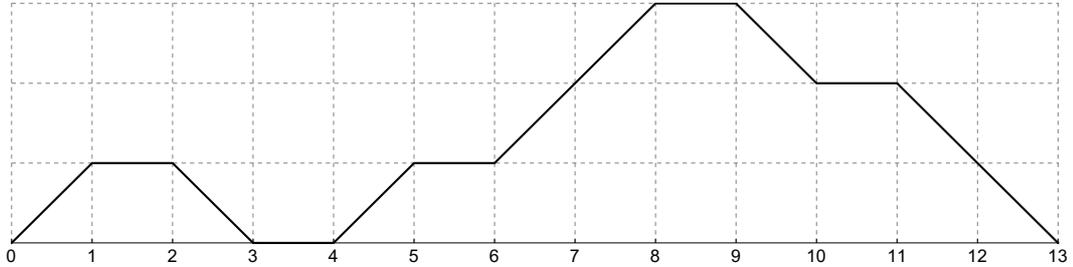}
    \caption{An example of the path corresponding to multi-index $(1,1,0,0,1,1,2,3,3,2,2,1)\in\mathcal{D}_{13}$; the respective term from the sum~\eqref{eq.mth.moment.paths} is the product $A_{0}B_{1}A_{0}^{*}B_{0}A_{0}B_{1}A_{1}A_{2}B_{3}A_{2}^{*}B_{2}A_{1}^{*}A_{0}^{*}$.}
    \label{fig.path}
\end{figure}

Let us first consider the even case $m=2n$. 
It is not difficult to realize that the only path in $\mathcal{D}_{2n}$ that reaches level $n$ is the path 
that goes from $0$ up to the middle and then down back to $0$; 
the corresponding multi-index $\mathbf j^{*}\in\mathcal{D}_{m}$ is
$$
 \mathbf j^{*}=(1,2,3,\dots,n-1,n,n-1,\dots,3,2,1), \quad\mbox{ if } m=2n,
$$
see Figure~\ref{fig.path-extreme} for an illustration. 
Recalling the notation $\mathbb{J}_{k,k}=B_{k}$ and $\mathbb{J}_{k,k+1}=\mathbb{J}_{k+1,k}^{*}=A_{k}$, we see that the sum in  \eqref{eq.mth.moment.paths} equals
$$
\mathbb{P}_{0}\mathbb{J}^{m}\mathbb{P}_{0}^*=
 A_{0}A_{1}\cdots A_{n-2}A_{n-1}A_{n-1}^{*}A_{n-2}^{*}\cdots A_{1}^{*}A_{0}^{*}+
 \sum_{\mathbf j\in\mathcal{D}_{2n}\setminus\{\mathbf j^{*}\}}\mathbb{J}_{0,j_{1}}\mathbb{J}_{j_{1},j_{2}}\cdots\mathbb{J}_{j_{2n-1},0}.
$$
Since no other paths (apart from $\mathbf j^{*}$) reach level $n$, the sum over indices $\mathcal{D}_{2n}\setminus\{\mathbf j^{*}\}$ contains only the products of $2n$ matrices from $B_{0},\dots,B_{n-1}$, $A_{0},\dots,A_{n-2}$, and $A_{0}^{*},\dots,A_{n-2}^{*}$, as claimed.

Next, consider the odd case $m=2n+1$. Here there are several paths reaching level $n$, but we single out the one going up to level $n$, having one horizontal step at level $n$ and then going down back to $0$: 
$$
 \mathbf j^{*}=(1,2,3,\dots,n-1,n,n,n-1,\dots,3,2,1), \quad\mbox{ if } m=2n+1,
$$
see Figure~\ref{fig.path-extreme}. 
\begin{figure}[htb!]
    \centering
    \includegraphics[width=0.95\textwidth]{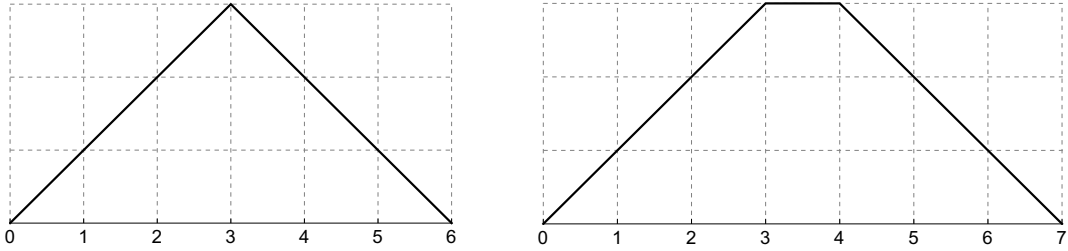}
    \caption{Examples of labelings $\mathbf j^{*}=(1,2,3,2,1)\in\mathcal{D}_{6}$ (left) and $\mathbf j^{*}=(1,2,3,3,2,1)\in\mathcal{D}_{7}$ (right); the corresponding terms from sum~\eqref{eq.mth.moment.paths} are $A_{0}A_{1}A_{2}A_{2}^{*}A_{1}^{*}A_{0}^{*}$ (left) and $A_{0}A_{1}A_{2}B_{3}A_{2}^{*}A_{1}^{*}A_{0}^{*}$ (right).}
    \label{fig.path-extreme}
\end{figure}
Now the sum in  \eqref{eq.mth.moment.paths} equals
$$
 A_{0}A_{1}\cdots A_{n-1}B_{n}A_{n-1}^{*}\cdots A_{1}^{*}A_{0}^{*}+
 \sum_{\mathbf j\in\mathcal{D}_{2n+1}\setminus\{\mathbf j^{*}\}}\mathbb{J}_{0,j_{1}}\mathbb{J}_{j_{1},j_{2}}\cdots\mathbb{J}_{j_{2n},0}.
$$
There are other paths in $\mathcal{D}_{2n+1}\setminus\{\mathbf j^{*}\}$  that reach level $n$. However, none of them contains a horizontal step at level $n$; in other words, the corresponding sum over the indices $\mathcal{D}_{2n+1}\setminus\{\mathbf j^{*}\}$ does not depend on $B_{n}$.  Moreover, since the total number of steps in any path is odd in this case, it is easy to see that every path will have at least one horizontal step, which means that every term in the sum over indices $\mathcal{D}_{2n+1}\setminus\{\mathbf j^{*}\}$ will contain at least one matrix $B_{0},\dots,B_{n-1}$.

To summarise: if we put
$$
\mathcal{Y}_{m}:= \sum_{\mathbf j\in\mathcal{D}_{m}\setminus\{\mathbf j^{*}\}}\mathbb{J}_{0,j_{1}}\mathbb{J}_{j_{1},j_{2}}\cdots\mathbb{J}_{j_{m-1},0},
$$
then the claims on the structure of $\mathcal{Y}_{m}$ follow. Finally, it is clear that $\mathcal{Y}_{m}$ is a~Hermitian matrix, for it is a difference of two Hermitian matrices $\mathbb{P}_{0}\mathbb{J}^{m}\mathbb{P}_{0}^*$ and the extracted term.
\end{proof}

\section{Uniqueness: proof of Theorem~\ref{thm.a3}(i)}\label{sec.c}

\subsection{Proof of uniqueness: plan}
Let $J$ be a bounded Jacobi matrix satisfying \eqref{a2} and let $\Lambda(J)=(\nu,\psi)$. 
Consider the self-adjoint block matrix $\bbJ$ with the Jacobi parameters \eqref{cc1}. 
Our plan of the proof of Theorem~\ref{thm.a3}(i) is as follows.
\begin{enumerate}[\rm (i)]
\item
Prove that the spectral data $(\nu,\psi)$ uniquely determine the $2\times2$ matrix spectral measure $\meas$ of $\bbJ$;
\item
Use Theorem~\ref{thm.cc2}(i) to infer that $\meas$ uniquely determines the equivalence class $[\bbJ]$;
\item
Prove that the equivalence class $[\bbJ]$ uniquely determines the coefficients $a_j$, $b_j$. 
\end{enumerate}
This logic can be represented by the diagram
$$
J\quad\rightarrow\quad
(\nu,\psi) \quad\rightarrow\quad
\meas \quad\rightarrow\quad
[\bbJ] \quad\rightarrow\quad
(A_j,B_j)\quad\rightarrow\quad
(a_j,b_j).
$$

\subsection{Step (i): $\meas$ is uniquely determined} 

Let us prove that the spectral data $\Lambda(J)=(\nu,\psi)$ uniquely determines the spectral measure $\meas$ of the self-adjoint block Jacobi matrix $\bbJ$. By \eqref{c1}, it suffices to check that the spectral data uniquely determines the $2\times2$ matrix $\bP_0f(\bJ)\bP_0^*$ for any continuous function $f$. By the Weierstrass approximation theorem, it suffices to check that the spectral data uniquely determines $\bP_0f(\bJ)\bP_0^*$ for any polynomial $f$. Finally, we see that it suffices to check that the spectral data uniquely determines the $2\times2$ matrix $\bP_0\bJ^m\bP_0^*$  for any $m\geq0$. 

Recalling \eqref{cc2}, \eqref{cc3}, we see that it suffices to check that the quantities $\omega_{2n}$, $\omega_{2n+1}$ are uniquely determined by $(\nu,\psi)$ for every $n\geq0$. But this follows directly from \eqref{a13a}, \eqref{a13b}: 
\begin{align}
\omega_{2n}
&=
\jap{(J^*J)^{n}\delta_0,\delta_0}
=
\int_0^\infty s^{2n}\dd\nu(s), 
\label{c2}
\\
\omega_{2n+1}
&=
\jap{J(J^{*}J)^{n}\delta_0,\delta_0}
=
\int_0^\infty s^{2n+1}\psi(s)\dd\nu(s).
\label{c3}
\end{align}
We have checked that the spectral data uniquely determines the spectral measure $\meas$ of $\bbJ$.

\subsection{Step (ii): $[\bbJ]$ is uniquely determined}
By Theorem~\ref{thm.cc2}(i),  the spectral measure $\meas$ uniquely determines the equivalence class $[\bbJ]$.

\subsection{Step (iii): the Jacobi parameters $a_j$, $b_j$ are uniquely determined}
It remains to show that only one matrix from the equivalence class $[\mathbb{J}]$ has the Jacobi parameters of the form
 $$
  A_{j}=\begin{pmatrix}
   0 & a_{j} \\ a_{j} & 0
  \end{pmatrix} \mbox{ with } a_j>0
  \quad\mbox{ and }\quad   
  B_{j}=\begin{pmatrix}
   0 & b_{j} \\ \overline{b}_{j} & 0
  \end{pmatrix},
 $$
for all $j\in\bbN_{0}$. To this end, we will use the following lemma whose proof is elementary.
 \begin{lemma}\label{lem.off-aig.unit.matr.id}
 Suppose
 $$
  \begin{pmatrix}
   0 & a \\ a & 0
  \end{pmatrix}U=
  \begin{pmatrix}
   0 & a' \\ a' & 0
  \end{pmatrix},
 $$
 where $a>0$, $a'>0$, and $U$ is a $2\times2$ unitary matrix. Then $a=a'$ and $U=I$.
 \end{lemma}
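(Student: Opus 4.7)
The plan is simply to expand the matrix product explicitly in terms of the entries of $U=(u_{ij})_{i,j=1,2}$ and read off constraints. Writing $J_{a}:=\begin{pmatrix} 0 & a \\ a & 0 \end{pmatrix}$, a direct computation gives
$$
J_{a}U=\begin{pmatrix} a u_{21} & a u_{22} \\ a u_{11} & a u_{12}\end{pmatrix},
$$
and equating this with $J_{a'}$ yields the four scalar equations $a u_{21}=0$, $a u_{12}=0$, $a u_{11}=a'$, $a u_{22}=a'$. Since $a>0$, the first two force $u_{12}=u_{21}=0$, so $U$ is diagonal, and the remaining two give $u_{11}=u_{22}=a'/a$.

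Next, I invoke unitarity: a diagonal unitary matrix has entries of modulus one, so $|a'/a|=1$. Combined with the hypothesis $a,a'>0$, this forces $a'/a=1$, i.e.\ $a=a'$ and $u_{11}=u_{22}=1$, so $U=I$. There is no real obstacle here; the whole content of the lemma is that the off-diagonal structure of $J_a$ together with positivity of $a$ rigidly determines both the scalar and the unitary factor.
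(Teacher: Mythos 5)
Your computation is correct and complete: expanding $J_aU$ entrywise, using $a>0$ to force $U$ diagonal with $u_{11}=u_{22}=a'/a$, and then invoking unitarity plus positivity to get $a'/a=1$ is exactly the elementary argument the paper has in mind (it omits the proof, calling it elementary). Nothing to add.
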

 Suppose $\mathbb{J}'$ is equivalent to $\mathbb{J}$, i.e., 
 \begin{equation}
\mathbb{J}'=\bbD(U)^{*}\mathbb{J}\bbD(U),
 \label{eq.equivalent.Jac.matr}
 \end{equation}
and the Jacobi parameters of $\mathbb{J}'$  are
$$
A_{j}'=\begin{pmatrix}
   0 & a_{j}' \\ a_{j}' & 0
  \end{pmatrix} \mbox{ with } a_j'>0
  \quad\mbox{ and }\quad   
  B_{j}'=\begin{pmatrix}
   0 & b_{j}' \\ \overline{b}_{j}' & 0
  \end{pmatrix},
 $$
for all $j\in\bbN_{0}$. Inspecting the first row of the matrices in~\eqref{eq.equivalent.Jac.matr}, we find
$$
 B_{0}'=B_{0} \quad\mbox{ and }\quad A_{0}'=U_{0}^{*}A_{0}U_{1}.
$$
Since $U_{0}=I$, it follows from Lemma~\ref{lem.off-aig.unit.matr.id} that $A_{0}'=A_{0}$ and $U_{1}=I$. Inspecting the second row in~\eqref{eq.equivalent.Jac.matr}, we get
$$
 B_{1}'=B_{1} \quad\mbox{ and }\quad A_{1}'=U_{1}^{*}A_{1}U_{2}.
$$
Since  $U_{1}=I$, it follows from Lemma~\ref{lem.off-aig.unit.matr.id} that $A_{1}'=A_{1}$ and $U_{2}=I$; and so on. 
Continuing this way, by induction we prove that $B_{j}'=B_{j}$ and $A_{j}'=A_{j}$ for all $j\geq0$. 
Thus, the equivalence class $[\bbJ]$ uniquely determines the Jacobi parameters $a_j>0$ and $b_j\in\bbC$. 
The proof of Theorem~\ref{thm.a3}(i) is complete.
\qed

\section{Surjectivity: proof of Theorem~\ref{thm.a3}(ii)}\label{sec.d}

\subsection{The set-up}
Let $\nu$ and $\psi$ be given; we need to construct the corresponding matrix $J$. Our plan is as follows:
\begin{enumerate}[\rm (i)]
\item
Define a $2\times2$ matrix valued measure $\meas$  by 
\begin{equation}
\int_{-\infty}^\infty (f_1(s)+sf_2(s))\dd\meas(s)
=
\int_0^\infty
\begin{pmatrix}
f_1(s)&\psi(s)sf_2(s)
\\
\overline{\psi(s)}sf_2(s)&f_1(s)
\end{pmatrix}
\dd\nu(s)
\label{d1}
\end{equation}
for any \emph{even} continuous functions $f_1$, $f_2$. 
\item
Prove that $\meas$ is non-degenerate in the sense of Theorem~\ref{thm.cc1}. 
\item
Use Theorem~\ref{thm.cc2}(ii) to assert the existence of a bounded block Jacobi matrix $\bbJ$ with the spectral measure $\meas$. 
\item
Prove that the equivalence class $[\bbJ]$ contains a matrix with the Jacobi parameters of the form
\begin{equation}
  A_{j}=\begin{pmatrix}
   0 & a_{j} \\ a_{j} & 0
  \end{pmatrix} \mbox{ with } a_j>0
  \quad\mbox{ and }\quad   
  B_{j}=\begin{pmatrix}
   0 & b_{j} \\ \overline{b}_{j} & 0
  \end{pmatrix}.
  \label{d2}
\end{equation}
\item
Consider the Jacobi matrix $J$ with the parameters $a_j$ and $b_j$ as above. 
Check that $J$ has the required spectral data, i.e. $\Lambda(J)=(\nu,\psi)$. 
\end{enumerate}
This logic can be represented by the diagram:
$$
(\nu,\psi) \quad\rightarrow\quad
\meas \quad\rightarrow\quad
[\bbJ] \quad\rightarrow\quad
(A_j,B_j) \quad\rightarrow\quad
(a_j,b_j) \quad\rightarrow\quad
J.
$$

\subsection{Step (i): definition of $\meas$}
Definition \eqref{d1} can be rewritten as follows: 
\begin{align}
\dd\meas(s)&=\frac12
\begin{pmatrix}
1&\psi(s)
\\
\overline{\psi(s)}&1
\end{pmatrix}
\dd\nu(s) \quad \text{ for $s\geq0$,}
\label{d3}
\\
\dd\meas(s)&=\frac12
\begin{pmatrix}
1&-\psi(-s)
\\
-\overline{\psi(-s)}&1
\end{pmatrix}
\dd\widetilde\nu(s) \quad \text{ for $s<0$,}
\label{d4}
\end{align}
where $\widetilde\nu$ is as in \eqref{a.sharp}.
The matrices in \eqref{d3}, \eqref{d4} are positive semidefinite. 
From here it is clear that $\meas$ is a non-negative matrix-valued measure. Since $\nu$ has bounded support, the same is true for $\meas$. 

\subsection{Step (ii): non-degeneracy of $\meas$}

The following lemma is crucial for our proof of surjectivity.
\begin{lemma}
Let $\meas$ be as in Step (i). Then $\meas$ is non-degenerate.
\end{lemma}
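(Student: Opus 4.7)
The plan is to show that $\norm{P}_{L^2(\meas)}^2 > 0$ for every nonzero $\bbC^2$-valued polynomial $P = (P_1, P_2)^T$, arguing by contradiction. The first step is to decompose each coordinate into its even and odd parts (in $s$), $P_j = P_j^{\mathrm{e}} + P_j^{\mathrm{o}}$, and to unfold the integral over $\bbR$ into one over $[0,\infty)$. Starting from the explicit formulas \eqref{d3}, \eqref{d4} and making the change of variables $s\mapsto -s$ on $(-\infty,0)$, I expect to arrive at
$$
\norm{P}_{L^2(\meas)}^2
=
\int_0^\infty \Bigl(\jap{M(s) Q_1(s), Q_1(s)}_{\bbC^2} + \jap{M(s) Q_2(s), Q_2(s)}_{\bbC^2}\Bigr) \dd\nu(s),
$$
where $Q_1 = (P_1^{\mathrm{e}}, P_2^{\mathrm{o}})^T$, $Q_2 = (P_1^{\mathrm{o}}, P_2^{\mathrm{e}})^T$, and $M(s)$ is the Hermitian matrix with $1$ on the diagonal and $\psi(s), \overline{\psi(s)}$ off the diagonal. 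The sign flip in the off-diagonal entries of $\dd\meas$ on $(-\infty,0)$ in \eqref{d4} is precisely what one needs to match the sign change of the odd parts $P_j^{\mathrm{o}}$ under $s\mapsto -s$.

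The matrix $M(s)$ has eigenvalues $1\pm|\psi(s)|\geq 0$, so it is positive semi-definite. If $\norm{P}_{L^2(\meas)}^2 = 0$, both quadratic forms vanish $\nu$-almost everywhere. On $\{s:|\psi(s)|<1\}$, $M(s)$ is positive definite, so $Q_1(s) = Q_2(s) = 0$. On $\{s:|\psi(s)|=1\}$, $\ker M(s)$ is one-dimensional, and the conditions $Q_1, Q_2\in \ker M(s)$ amount to pointwise linear relations of the form $P_2^{\mathrm{o}}(s) = -\overline{\psi(s)}\,P_1^{\mathrm{e}}(s)$ and $P_2^{\mathrm{e}}(s) = -\overline{\psi(s)}\,P_1^{\mathrm{o}}(s)$. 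In either case one reads off $|P_1^{\mathrm{e}}(s)| = |P_2^{\mathrm{o}}(s)|$ and $|P_1^{\mathrm{o}}(s)| = |P_2^{\mathrm{e}}(s)|$ for $\nu$-almost every $s\geq 0$.

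Next I will upgrade these modulus identities to polynomial identities. Writing $\overline{Q}$ for the polynomial with the complex conjugates of the coefficients of $Q$, the differences $P_1^{\mathrm{e}}\overline{P_1^{\mathrm{e}}} - P_2^{\mathrm{o}}\overline{P_2^{\mathrm{o}}}$ and $P_1^{\mathrm{o}}\overline{P_1^{\mathrm{o}}} - P_2^{\mathrm{e}}\overline{P_2^{\mathrm{e}}}$ are polynomials in $s$ vanishing on $\supp\nu$; since $\supp\nu$ is infinite by assumption, they vanish identically on $\bbR$.

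The decisive step is a parity-and-degree argument. Since $P_1^{\mathrm{e}}$ is even and $P_2^{\mathrm{o}}$ is odd, I may write $P_1^{\mathrm{e}}(s) = u(s^2)$ and $P_2^{\mathrm{o}}(s) = s\,v(s^2)$ for polynomials $u$ and $v$, and switch to $t = s^2$. The identity $P_1^{\mathrm{e}}\overline{P_1^{\mathrm{e}}} = P_2^{\mathrm{o}}\overline{P_2^{\mathrm{o}}}$ then becomes the polynomial identity $u(t)\overline{u}(t) = t\,v(t)\overline{v}(t)$. Its left-hand side has even degree $2\deg u$, while a nonzero right-hand side has odd degree $1+2\deg v$; hence both sides are the zero polynomial, forcing $u = v = 0$ and so $P_1^{\mathrm{e}} = P_2^{\mathrm{o}} = 0$. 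The symmetric argument applied to the other modulus identity yields $P_1^{\mathrm{o}} = P_2^{\mathrm{e}} = 0$, so $P\equiv 0$, contradicting the assumption. The main obstacle is precisely this last step: a purely measure-theoretic reading of the constraints on $\{|\psi|=1\}$ produces only the pointwise relation $P_2 = -\overline{\psi}\,P_1$, which is compatible with many nontrivial polynomial pairs, so non-degeneracy of $\meas$ is not automatic from $|\psi|\leq 1$ alone; it is the rigidity of the parity decomposition, combined with the infinite support of $\nu$, that closes the argument.
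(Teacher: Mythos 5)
Your proof is correct, and it shares its first half with the paper's argument but finishes by a genuinely different route. Both proofs unfold the integral onto $[0,\infty)$ and exploit positive semidefiniteness: the paper works with
$$
F(s)=\tfrac12\Jap{M(s)P(s),P(s)}+\tfrac12\Jap{M^{-}(s)P(-s),P(-s)},\qquad M^{\pm}(s)=\begin{pmatrix}1&\pm\psi(s)\\ \pm\overline{\psi(s)}&1\end{pmatrix},
$$
and a short computation confirms that this is exactly your $\jap{MQ_1,Q_1}+\jap{MQ_2,Q_2}$: the cross terms $\Re(\overline{\psi}\,p\overline{q})(s)-\Re(\overline{\psi}\,p\overline{q})(-s)$ regroup as $2\Re\bigl(\overline{\psi}(p^{\mathrm e}\overline{q^{\mathrm o}}+p^{\mathrm o}\overline{q^{\mathrm e}})\bigr)$, matching the off-diagonal contributions of $Q_1$ and $Q_2$. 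From there the routes diverge. The paper splits the zero set of $F$ according to whether $\abs{\psi}<1$ (rank two, so $P$ vanishes on an infinite set at once) or $\abs{\psi}=1$, and in the latter case forms the single polynomial $w(s)=p(s)\overline{p}(-s)+q(s)\overline{q}(-s)$, shows $w\equiv0$, and evaluates at $s=\ii t$ to obtain $\abs{p(\ii t)}^2+\abs{q(\ii t)}^2=0$. You instead extract the modulus identities $\abs{p^{\mathrm e}}=\abs{q^{\mathrm o}}$ and $\abs{p^{\mathrm o}}=\abs{q^{\mathrm e}}$, which hold uniformly on both pieces (so no case split is needed), promote them to polynomial identities using the infinitude of $\supp\nu$, and kill them by the parity--degree count $\deg(u\overline{u})=2\deg u$ versus $\deg(tv\overline{v})=1+2\deg v$. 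Both endgames rest on the same two inputs (semidefiniteness of the $2\times2$ blocks and infinite support of $\nu$); yours buys a uniform treatment of the two rank cases and stays entirely on the real line, while the paper's is marginally shorter at the price of the complexification trick. The only point to tighten in a written version is the unfolding identity itself, which you only ``expect to arrive at'': it does hold, including at a possible atom of $\nu$ at $0$ (where the off-diagonal contributions of \eqref{d3} and \eqref{d4} cancel and your integrand correctly gives $\abs{p(0)}^2+\abs{q(0)}^2$), but it should be computed rather than asserted.
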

\begin{proof}
Let $P=\begin{pmatrix}p\\ q\end{pmatrix}$ be a $\bbC^{2}$-valued polynomial and suppose that $\|P\|_{L^2(\meas)}=0$, i.e. 
\begin{equation}
\int_{-\infty}^\infty \jap{\dd\meas(s)P(s),P(s)}_{\bbC^2}=0.
\label{d4a}
\end{equation}
We need to check that $P=0$. 
Let us fix a Borel representative of $\psi\in L^\infty(\nu)$ defined for all $s\geq0$ and satisfying $\abs{\psi(s)}\leq 1$ for \emph{all} $s\geq0$. With $\psi$ denoting this representative, we set 
\begin{align}
F(s):=&\,\frac12\Jap{\begin{pmatrix}1&\psi(s)\\\overline{\psi(s)}&1\end{pmatrix}\begin{pmatrix}p(s)\\ q(s)\end{pmatrix},\begin{pmatrix}p(s)\\ q(s)\end{pmatrix} }
\notag
\\
&+
\frac12\Jap{\begin{pmatrix}1&-\psi(s)\\-\overline{\psi(s)}&1\end{pmatrix}\begin{pmatrix}p(-s)\\ q(-s)\end{pmatrix},\begin{pmatrix}p(-s)\\ q(-s)\end{pmatrix}};
\label{d4b}
\end{align}
this is a bounded non-negative Borel function defined for all $s\geq0$. Let us define the Borel set
$$
S_0:=\{s\geq0: F(s)=0\}.
$$
By \eqref{d3}, \eqref{d4}, we can write \eqref{d4a} equivalently as
$$
\int_0^\infty F(s)\dd\nu(s)=0.
$$
From here it follows that $S_0$ has a full $\nu$-measure, i.e. $\nu(S_0)=1$.

Denote 
$$
S_1:=\{s\geq0: \abs{\psi(s)}=1\}
\quad\text{ and }\quad
S_2:=\{s\geq0: \abs{\psi(s)}<1\};
$$
these are disjoint Borel sets with $[0,\infty)=S_1\cup S_2$. 
The $2\times2$ matrix in \eqref{d3} has rank  one on $S_1$ and rank two on $S_2$. 

Let us split $S_0$ into a union of two disjoint Borel sets 
$$
S_0=(S_1\cap S_0)\cup (S_2\cap S_0).
$$
We claim that at least one of the two sets $S_1\cap S_0$ and $S_2\cap S_0$ is infinite. Indeed, if both of these sets are finite, then $S_0$ is also a finite set. But then we find that $\nu$ is supported on the finite set $S_0$, i.e. $\nu$ is a finite linear combination of point masses supported in $S_0$. This contradicts our assumption that the support of $\nu$ is infinite. 

Now consider two cases. 

\emph{Case 1: the set $S_2\cap S_0$ is infinite.}
Since both terms on the right hand side of \eqref{d4b} are non-negative, 
identity $F(s)=0$ for $s\in S_2\cap S_0$ implies that both of these terms vanish on this set. 
Let us focus on the first term for definiteness. Since the matrix in \eqref{d3} has rank two on $S_2$, it follows that $P(s)=0$ for all $s\in S_2\cap S_0$. As $p$, $q$ are polynomials and the set $S_2\cap S_0$ is infinite, we conclude that $P=0$, so in this case the proof is complete. 

\emph{Case 2: the set $S_1\cap S_0$ is infinite.}
Again, both terms on the right hand side of \eqref{d4b} vanish for $s\in S_1\cap S_0$. Now we need to use both terms. 
Using the fact that $\abs{\psi(s)}=1$ on $S_1\cap S_0$, we find  
$$
p(s)+\psi(s)q(s)=0
\quad\text{ and }\quad 
p(-s)-\psi(s)q(-s)=0
\quad\text{ for all $s\in S_1\cap S_0$. }
$$
Let $w$ be the polynomial
$$
w(s):=p(s)\overline{p}(-s)+q(s)\overline{q}(-s), \quad s\in\bbR.
$$
Let us stress that $\overline{p}$ (resp. $\overline{q}$) is the polynomial $p$ (resp. $q$) with complex conjugated coefficients.
Again using that $\abs{\psi(s)}=1$, we find
$$
w(s)=-\psi(s)q(s)\overline{\psi(s)}\overline{q(-s)}+q(s)\overline{q(-s)}=0
$$
for all $s\in S_1\cap S_0$. We conclude that the polynomial $w$ vanishes on the infinite set  $S_1\cap S_0$, hence $w=0$. It remains to check that this implies $P=0$. 

We would like to consider $w(s)$ for complex $s$. We have $\overline{p}(s)=\overline{p(\overline{s})}$ for $s\in\bbC$ and similarly for $q(s)$. Taking $s=\ii t$ with $t\in\bbR$, we write the definition of $w$ as
\begin{align*}
w(\ii t)&=p(\ii t)\overline{p(-\overline{\ii t})}+q(\ii t)\overline{q(-\overline{\ii t})}
\\
&=\abs{p(\ii t)}^2+\abs{q(\ii t)}^2=0
\end{align*}
for all $t\in\bbR$. It follows that $p(\ii t)=q(\ii t)=0$ for all $t\in\bbR$ and therefore $p=q=0$ as required. 
The proof is complete.
\end{proof}

\subsection{Step (iii): existence of $\bbJ$}
Since the measure $\meas$ is non-degenerate, by Theorem~\ref{thm.cc2}(ii) there exists a non-singular $2\times2$ block-Jacobi matrix $\bbJ$ with the spectral measure $\meas$.

\subsection{Step (iv): the equivalence class $[\bbJ]$ contains the ``correct'' representative}
Let $\bbJ$ be as defined at the previous step.
Here we show that the equivalence class $[\bbJ]$ contains a representative with the Jacobi parameters $A_j$, $B_j$ satisfying~\eqref{d2}. 
We use the moments of $\meas$, for which we find, by \eqref{d1}, taking $f_1(s)=s^{2n}$ and $f_2(s)=0$,
\begin{equation}
 \int_{-\infty}^\infty s^{2n}\dd\meas(s)=
 \begin{pmatrix}
  \omega_{2n} & 0 \\ 0 & \omega_{2n}
 \end{pmatrix},
 \quad
 \omega_{2n}=\int_0^\infty s^{2n}\dd\nu(s),
\label{eq.even.moments}
\end{equation}
and similarly taking $f_{1}(s)=0$ and $f_{2}(s)=s^{2n}$,
\begin{equation}
 \int_{-\infty}^\infty s^{2n+1}\dd\meas(s)=
 \begin{pmatrix}
  0 & \omega_{2n+1} \\ \overline{\omega}_{2n+1} & 0
 \end{pmatrix},  
\quad
\omega_{2n+1}=\int_0^\infty \psi(s)s^{2n+1}\dd\nu(s).
\label{eq.odd.moments}
\end{equation}
On the other hand, by \eqref{c1} the moments of $\meas$ can be expressed as
\begin{equation}
 \int_{-\infty}^\infty s^{m}\dd\meas(s)=\mathbb{P}_{0}\mathbb{J}^{m}\mathbb{P}_{0}^*, \quad m\geq0.
\label{eq.moment.ident}
\end{equation}
The proof proceeds by induction. It follows from the identity~\eqref{eq.moment.ident} with $m=1$ and~\eqref{eq.odd.moments} that
$$
 B_{0}=\mathbb{P}_{0}\mathbb{J}\mathbb{P}_{0}^*=
 \begin{pmatrix}
  0 & \omega_{1} \\ \overline{\omega}_{1} & 0
 \end{pmatrix}.
$$
Thus, $B_{0}$ already has the desired structure with $b_{0}:=\omega_{1}$. 
An inspection of the second moments in~\eqref{eq.moment.ident} and~\eqref{eq.even.moments} yields the equation
$$
 B_{0}^{2}+A_{0}A_{0}^{*}=\mathbb{P}_{0}\mathbb{J}^{2}\mathbb{P}_{0}^*=\begin{pmatrix}
  \omega_{2} & 0 \\ 0 & \omega_{2}
 \end{pmatrix}.
$$
By using the already obtained formula for $B_{0}$, we get 
$$
A_{0}A_{0}^{*}=\begin{pmatrix}
  \omega_{2}-|\omega_{1}|^{2} & 0 \\ 0 & \omega_{2}-|\omega_{1}|^{2}
 \end{pmatrix}.
$$
Since $A_0$ is non-singular (i.e. $\det A_0\not=0$) we have $\omega_{2}>|\omega_{1}|^{2}$; this can also be seen directly by  Cauchy--Schwarz. 
Hence we can choose the Jacobi parameter $A_{0}$ of the desired structure
$$
 A_{0}=\begin{pmatrix}
  0 & a_{0} \\ a_{0} & 0
 \end{pmatrix}, \quad\mbox{ with }\; a_{0}:=\sqrt{\omega_{2}-|\omega_{1}|^{2}}.
$$

Now, suppose that for some $n\in\bbN$, the matrices $B_{0},\dots,B_{n-1}$ and $A_{0},\dots,A_{n-1}$ have the off-diagonal structure~\eqref{d2}. We show that $B_{n}$ and $A_{n}$ can be chosen to be of the same structure. 

\emph{Step 1: $B_n$ has the required structure.} By~\eqref{eq.moment.ident}, \eqref{eq.odd.moments} and Lemma~\ref{lem.moments} we find
$$
 A_{0}A_{1}\dots A_{n-1}B_{n}A_{n-1}^{*}\dots A_{1}^{*}A_{0}^{*}+\mathcal{Y}_{2n+1}=\mathbb{P}_{0}\mathbb{J}^{2n+1}\mathbb{P}_{0}^*=
 \begin{pmatrix} 0 & \omega_{2n+1} \\ \overline{\omega}_{2n+1} & 0
 \end{pmatrix},
$$
where $\mathcal{Y}_{2n+1}$ is a Hermitian matrix given by a finite sum of products of exactly $2n+1$ matrices from the list 
$$
B_{0},\dots,B_{n-1};A_{0},\dots,A_{n-1};A_{0}^{*},\dots,A_{n-1}^{*}.
$$
By the induction hypothesis, all of these matrices have the structure \eqref{d2}, and in particular, they are all off-diagonal. A product of an odd number of $2\times 2$ off-diagonal matrices is off-diagonal. We also know that $ \mathcal{Y}_{2n+1}$ is Hermitian. It follows that 
$$
 \mathcal{Y}_{2n+1}= \begin{pmatrix} 0 & y_{2n+1} \\ \overline{y}_{2n+1} & 0
 \end{pmatrix},
$$
for some $y_{2n+1}\in\bbC$. 
From here we find that $B_{n}$ is a Hermitian matrix given by 
$$
 B_{n}=A_{n-1}^{-1}\dots A_{0}^{-1}
 \begin{pmatrix} 0 & \omega_{2n+1}-y_{2n+1} \\ \overline{\omega}_{2n+1}-\overline{y}_{2n+1} & 0
 \end{pmatrix}
 \left(A_{0}^{*}\right)^{-1}\dots ( A_{n-1}^{*} )^{-1}.
$$
Taking also into account that
$$
 A_{j}^{-1}=\begin{pmatrix} 0 & 1/a_{j} \\ 1/a_{j} & 0
 \end{pmatrix},\quad a_j>0, 
\quad j=0,1,\dots,n-1,
$$
we find that $B_{n}$ is also an off-diagonal Hermitian matrix, and so it has the desired structure \eqref{d2}.

\emph{Step 2:  $A_n$ has the required structure.} 
From~\eqref{eq.moment.ident}, \eqref{eq.even.moments} and Lemma~\ref{lem.moments} we find
$$
A_{0}\dots A_{n-1}A_{n}A_{n}^{*}A_{n-1}^{*}\dots A_{0}^{*}+\mathcal{Y}_{2n+2}=\mathbb{P}_{0}\mathbb{J}^{2n+2}\mathbb{P}_{0}^*=\begin{pmatrix} \omega_{2n+2} & 0 \\ 0 & \omega_{2n+2} \end{pmatrix}, 
$$
where $\mathcal{Y}_{2n+2}$ is a Hermitian matrix given by a finite sum of products of exactly $2n+2$ matrices from the list 
$$
B_{0},\dots,B_{n};A_{0},\dots,A_{n-1};A_{0}^{*},\dots,A_{n-1}^{*}.
$$
By the induction hypothesis and Step~1, all of these matrices have the structure \eqref{d2}, and in particular, they are all Hermitian and off-diagonal. It is easy to see that a product of an even number of $2\times 2$ Hermitian off-diagonal matrices is of the form
$$
\begin{pmatrix}
c&0\\0&\overline{c}
\end{pmatrix}, \quad c\in\bbC.
$$
We also know that $ \mathcal{Y}_{2n+2}$ is Hermitian. It follows that 
$$
\mathcal{Y}_{2n+2}=\begin{pmatrix} y_{2n+2} & 0 \\ 0 & y_{2n+2} \end{pmatrix},
$$
for some $y_{2n+2}\in\bbR$.
From here we find
\begin{align*}
A_{n}A_{n}^{*}&=A_{n-1}^{-1}\dots A_{0}^{-1}
\begin{pmatrix} 
\omega_{2n+2}-y_{2n+2}&0
\\
0&\omega_{2n+2}-y_{2n+2}
\end{pmatrix}
\left(A_{0}^{*}\right)^{-1}\dots ( A_{n-1}^{*} )^{-1}
\\
&=\frac{\omega_{2n+2}-y_{2n+2}}{|a_{0}|^{2}\dots|a_{n-1}|^{2}}
\begin{pmatrix} 1 & 0 \\ 0 & 1 \end{pmatrix}.
\end{align*}
Since all parameters $A_n$ are non-singular, we have  $\omega_{2n+2}>y_{2n+2}$. Now we can choose the Jacobi parameter $A_n$ as 
$$
 A_{n}=\begin{pmatrix}
  0 & a_{n} \\ a_{n} & 0
 \end{pmatrix}, \quad\mbox{ with }\; a_{n}:=\frac{\sqrt{\omega_{2n+2}-y_{2n+2}}}{|a_{0}|\dots|a_{n-1}|},
$$
which concludes the induction step. We have proved that the class of $\bbJ$ contains a~Jacobi matrix with the parameters of the form \eqref{d2}.

\subsection{Step (v): $J$ has the required spectral data}

Let $J$ be the Jacobi matrix with the parameters $a_j$, $b_j$ obtained at the previous step. All block Jacobi matrices in the class $[\bbJ]$ are bounded, i.e. the Jacobi parameters $A_j$, $B_j$ are bounded; it follows that $a_j$, $b_j$ are also bounded. 
Condition $a_j>0$ is satisfied by the previous step of the proof. 
Let $\Lambda(J)=(\nu_*,\psi_*)$ be the spectral data of $J$. We need to prove that $\nu_*=\nu$ and $\psi_*=\psi$. 

We use the moments of $\meas$. For these moments we have the expressions \eqref{eq.even.moments} and \eqref{eq.odd.moments}. On the other hand, by \eqref{cc2}, \eqref{cc3} and \eqref{c2}, \eqref{c3}, we find 
\begin{align*}
 \int_{-\infty}^\infty s^{2n}\dd\meas(s)&=
 \begin{pmatrix}
  \omega_{2n} & 0 \\ 0 & \omega_{2n}
 \end{pmatrix},
&
 \omega_{2n}&=\int_0^\infty s^{2n}\dd\nu_*(s),
\\
 \int_{-\infty}^\infty s^{2n+1}\dd\meas(s)&=
 \begin{pmatrix}
  0 & \omega_{2n+1} \\ \overline{\omega}_{2n+1} & 0
 \end{pmatrix},  
&
\omega_{2n+1}&=\int_0^\infty \psi_*(s)s^{2n+1}\dd\nu_*(s).
\end{align*}
It follows that 
\begin{align*}
\int_0^\infty s^{2n}\dd\nu(s)
&=
\int_0^\infty s^{2n}\dd\nu_*(s),
\\
\int_0^\infty \psi(s)s^{2n+1}\dd\nu(s)
&=\int_0^\infty \psi_*(s)s^{2n+1}\dd\nu_*(s)
\end{align*}
for all $n\geq0$. From the first equation we find that $\nu=\nu_*$ and from the second equation we find that $\psi(s)=\psi_*(s)$ for $\nu$-a.e. $s>0$. 
The proof of Theorem~\ref{thm.a3}(ii) is complete. \qed

\subsection{Modifications for complex $a_{j}$ with prescribed arguments}
\label{sec.d7}
One observes that the recursive algorithm for the reconstruction of the Jacobi parameters $A_{j}, B_{j}$ described in Step~(iv) determines uniquely the matrices $B_{j}$ and $A_{j}A_{j}^{*}$. Taking into account the off-diagonal structure of $A_{j}$ and $B_{j}$, see~\eqref{cc1}, the procedure determines uniquely $b_{j}$ and $|a_{j}|$. To reconstruct $a_{j}$ completely, we have used the assumed positivity of $a_{j}$.

Nevertheless, there is also the one-to-one correspondence between the set of Jacobi matrices whose Jacobi parameters satisfy
$$
b_{j}\in\bbC \quad\mbox{ and }\quad a_{j}\in\bbC\setminus\{0\}, \; \arg a_{j}=\theta_{j}, \quad\forall j\geq0,
$$
where $\theta_{j}\in(-\pi,\pi]$ is a fixed sequence, and the set of spectral data $(\nu,\psi)$ specified by Theorem~\ref{thm.a3}. The proof is essentially the same as the proof of Theorem~\ref{thm.a3}. Only a single step in both proofs of the uniqueness and surjectivity is to be slightly adjusted.

In Step~(iii) of the proof of Theorem~\ref{thm.a3}(i), one needs to show that the equivalence class $[\bbJ]$ contains only one block Jacobi matrix having the Jacobi parameters
 $$
  A_{j}=\begin{pmatrix}
   0 & a_{j} \\ a_{j} & 0
  \end{pmatrix} \mbox{ with } \arg a_j=\theta_{j}
  \quad\mbox{ and }\quad   
  B_{j}=\begin{pmatrix}
   0 & b_{j} \\ \overline{b}_{j} & 0
  \end{pmatrix}.
 $$
To this end, one uses the same inductive argument based on a modification of Lemma~\ref{lem.off-aig.unit.matr.id} that reads:
\begin{lemma*}
 Suppose
 $$
  \begin{pmatrix}
   0 & a \\ a & 0
  \end{pmatrix}U=
  \begin{pmatrix}
   0 & a' \\ a' & 0
  \end{pmatrix},
 $$
 where $a,a'\neq0$, $\arg a = \arg a'$, and $U$ is a $2\times2$ unitary matrix. Then $a=a'$ and $U=I$.
\end{lemma*}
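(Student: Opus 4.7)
The plan is to multiply out the matrix product on the left-hand side entrywise and compare with the right-hand side. Writing the unknown unitary as $U = \begin{pmatrix} u_{11} & u_{12} \\ u_{21} & u_{22} \end{pmatrix}$, a direct computation gives
$$
\begin{pmatrix} 0 & a \\ a & 0 \end{pmatrix} U = \begin{pmatrix} a u_{21} & a u_{22} \\ a u_{11} & a u_{12} \end{pmatrix}.
$$
Matching this with $\begin{pmatrix} 0 & a' \\ a' & 0 \end{pmatrix}$ and using $a \neq 0$, I would read off the four scalar equations $u_{21} = 0$, $u_{22} = a'/a$, $u_{11} = a'/a$, $u_{12} = 0$. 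Thus $U$ is necessarily the scalar matrix $U = (a'/a) I$.

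It remains to exploit the two extra hypotheses. Since $U = (a'/a) I$ must be unitary, I would conclude that $|a'/a| = 1$. The hypothesis $\arg a = \arg a'$ forces $a'/a$ to be a positive real number. The two conditions together pin down $a'/a = 1$, which simultaneously gives $a = a'$ and $U = I$, as required.

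There is no real obstacle here: the argument is just a $2\times 2$ computation followed by combining the unimodularity coming from unitarity with the assumed equality of arguments. The structure of the proof is identical to that of the original Lemma~\ref{lem.off-aig.unit.matr.id}; the only modification is that the positivity condition $a, a' > 0$ (which directly yields $a'/a > 0$) is replaced by $\arg a = \arg a'$, which plays exactly the same role in the final step.
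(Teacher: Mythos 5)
Your computation is correct: the matrix identity forces $U=(a'/a)I$, and unitarity together with $\arg a=\arg a'$ pins down $a'/a=1$. The paper omits the proof as elementary, and your direct entrywise argument is exactly the intended one, with the equality of arguments playing the same role that positivity plays in Lemma~\ref{lem.off-aig.unit.matr.id}.
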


The only modification in the proof of Theorem~\ref{thm.a3}(ii) occurs in Step~(iv). The inductive argument proceeds unchanged up to the point where parameter $a_{j}$ is to be determined by the equation
$$
 A_{j}A_{j}^{*}=\begin{pmatrix}
   r_{j} & 0 \\ 0 & r_{j}
  \end{pmatrix}, \quad r_{j}>0.
$$
Here we need to choose the solution 
$$
 A_{j}=\begin{pmatrix}
  0 & a_{j} \\ a_{j} & 0
 \end{pmatrix}, \quad\mbox{ with }\; a_{j}:=\sqrt{r_{j}}\,e^{\ii\theta_{j}}.
$$

\section{Proofs of Theorems~\ref{thm.a-homeo}, \ref{thm.a3a}, \ref{thm.a3b} and \ref{thm.og}} \label{sec.e}
\subsection{Proof of Theorem~\ref{thm.a-homeo}}
Let $J_N$ be a sequence of bounded Jacobi matrices satisfying \eqref{a2}, and suppose that $J_N\to J$ strongly as $N\to\infty$. Denoting $\Lambda(J_N)=(\nu_N,\psi_N)$ and $\Lambda(J)=(\nu,\psi)$, we need to prove that $(\nu_N,\psi_N)\to (\nu,\psi)$ weakly.

We start with the following remark: 
\begin{equation}
\norm{J}=\sup(\supp\nu).
\label{e3}
\end{equation}
Indeed, the norms of $J$ and $\abs{J}$ coincide. The norm of $\abs{J}$ can be expressed as the supremum of the support of the projection-valued spectral measure for $\abs{J}$. 
Since $\delta_0$ is the element of maximal type for $\abs{J}$, the support of the projection-valued spectral measure of $\abs{J}$ coincides with the support of $\nu$, and so the claim \eqref{e3} follows. 

Next, the norms of a strongly convergent sequence of operators are uniformly bounded, and therefore by \eqref{e3} the supports of $\nu_N$ are uniformly bounded. 

By complex conjugation, from the strong convergence $J_N\to J$ we obtain the strong convergence $J_N^*\to J^*$.
By \eqref{c2}, \eqref{c3}, we find
\begin{align}
\jap{(J_N^*J_N)^n\delta_0,\delta_0}
&=
\int_0^\infty s^{2n}\dd\nu_N(s), 
\label{e5}
\\
\jap{J_N(J_{N}^{*}J_N)^n\delta_0,\delta_0}
&=
\int_0^\infty s^{2n+1}\psi_N(s)\dd\nu_N(s)
\label{e6}
\end{align}
for all $N$. 
 It follows that we can pass to the limit in the left hand sides of~\eqref{e5} and~\eqref{e6}. After taking linear combinations over $n$, this yields
\begin{align*}
\int_0^\infty f(s^2)\dd\nu_N(s)&\to \int_0^\infty f(s^2) \dd\nu(s), 
\\
\int_0^\infty sf(s^2)\psi_N(s)\dd\nu_N(s)&\to \int_0^\infty sf(s^2) \psi(s)\dd\nu(s)
\end{align*}
as $N\to\infty$ for all polynomials $f$. Since the supports of $\nu_N$ are uniformly bounded, we can use the Weierstrass approximation theorem, which extends this convergence from polynomials $f$ to continuous functions $f$. This yields the weak convergence of spectral data. 

 Conversely, let $(\nu_N,\psi_N)$ be a sequence of spectral data convergent weakly to $(\nu,\psi)$. By Theorem~\ref{thm.a3}, we have $(\nu_N,\psi_N)=\Lambda(J_N)$ and $(\nu,\psi)=\Lambda(J)$ for unique bounded Jacobi matrices $J_N$, $J$ satisfying \eqref{a2}; we need to prove that $J_N\to J$ strongly. 

We need a lemma. For every $j\geq0$, let us denote by $a_j(J)$, $b_j(J)$ the Jacobi parameters of a bounded Jacobi matrix $J$ satisfying \eqref{a2}. 
\begin{lemma}
Let $J_N$, $J$ be bounded Jacobi matrices satisfying \eqref{a2} and $\Lambda(J_{N})\to\Lambda(J)$ weakly as $N\to\infty$. 
Then for every $j$, we have
$$
a_j(J_N)\to a_j(J), \quad b_j(J_N)\to b_j(J), \quad N\to\infty.
$$
\end{lemma}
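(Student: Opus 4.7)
The plan is to invert the inductive reconstruction algorithm from Step~(iv) of the proof of Theorem~\ref{thm.a3}(ii) and to observe that every step in that algorithm is a continuous function of the moments of the associated block-Jacobi spectral measure. Since weak convergence of spectral data is precisely convergence of these moments, convergence of the Jacobi parameters will follow by induction on $j$.

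First I would note that the weak convergence $(\nu_N,\psi_N)\to(\nu,\psi)$, applied to the continuous test functions $s^{2n}$ and $s^{2n+1}$, together with \eqref{c2} and \eqref{c3}, yields
\begin{align*}
\omega_{2n}(J_N)&=\int_0^\infty s^{2n}\dd\nu_N(s)\longrightarrow\int_0^\infty s^{2n}\dd\nu(s)=\omega_{2n}(J),\\
\omega_{2n+1}(J_N)&=\int_0^\infty s^{2n+1}\psi_N(s)\dd\nu_N(s)\longrightarrow\int_0^\infty s^{2n+1}\psi(s)\dd\nu(s)=\omega_{2n+1}(J).
\end{align*}
By \eqref{c1}, \eqref{cc2} and \eqref{cc3}, this is equivalent to the convergence of all matrix moments $\mathbb{P}_0\mathbb{J}_N^m\mathbb{P}_0^*\to\mathbb{P}_0\mathbb{J}^m\mathbb{P}_0^*$ of the associated block Jacobi operators.

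Next I would run an induction on $j$. The base case uses the explicit formulas from Step~(iv), namely $b_0=\omega_1$ and $a_0=\sqrt{\omega_2-|\omega_1|^2}$; since $a_0(J)>0$ by \eqref{a2}, the square root is evaluated at a strictly positive value where it is continuous, so $b_0(J_N)\to b_0(J)$ and $a_0(J_N)\to a_0(J)$. For the inductive step, assume $a_j(J_N)\to a_j(J)$ and $b_j(J_N)\to b_j(J)$ for all $j\leq n-1$. By Lemma~\ref{lem.moments},
\begin{align*}
A_0\cdots A_{n-1}B_nA_{n-1}^*\cdots A_0^*&=\mathbb{P}_0\mathbb{J}^{2n+1}\mathbb{P}_0^*-\mathcal{Y}_{2n+1},\\
A_0\cdots A_{n-1}A_nA_n^*A_{n-1}^*\cdots A_0^*&=\mathbb{P}_0\mathbb{J}^{2n+2}\mathbb{P}_0^*-\mathcal{Y}_{2n+2},
\end{align*}
where $\mathcal{Y}_{2n+1}$ and $\mathcal{Y}_{2n+2}$ are polynomial expressions in the previously determined Jacobi parameters, hence convergent by the induction hypothesis. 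Inverting the already-convergent diagonal products on the left (possible because the limits $A_j(J)$ are non-singular, thanks to $a_j(J)>0$) and using the convergence of the right-hand sides, I would solve for $B_n$ and $A_nA_n^*=a_n^2I$, obtaining $b_n(J_N)\to b_n(J)$ and $a_n(J_N)^2\to a_n(J)^2$. Since $a_n(J)>0$, continuity of $\sqrt{\,\cdot\,}$ at the positive value $a_n(J)^2$ yields $a_n(J_N)\to a_n(J)$, closing the induction.

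The only substantive point is positivity of the arguments of the square roots in the limit, which is precisely the standing assumption $a_j(J)>0$ from \eqref{a2}; this keeps $\sqrt{\,\cdot\,}$ away from its only point of non-smoothness. Everything else is continuity of polynomial composition and of matrix inversion on the relevant open sets, so I do not expect any serious obstacle beyond recording the formulas carefully.
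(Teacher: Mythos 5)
Your proposal is correct and follows essentially the same route as the paper: convergence of the scalar moments $\omega_m$ gives convergence of the block moments $\mathbb{P}_0\mathbb{J}_N^m\mathbb{P}_0^*$, and then an induction via Lemma~\ref{lem.moments}, inverting the non-singular products $A_0\cdots A_{n-1}$, recovers convergence of $B_n$ and of $A_nA_n^*$, hence of $b_n$ and $a_n$. (The only cosmetic difference is your emphasis on positivity for the square root; continuity of $\sqrt{\,\cdot\,}$ on $[0,\infty)$ already suffices, but this changes nothing.)
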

\begin{proof}
Let us consider the $2\times 2$ block Jacobi matrix $\bbJ_N$ (resp. $\bbJ$) with the off-diagonal Jacobi parameters as in \eqref{d2}, corresponding to $J_N$ (resp. $J$). Furthermore, let $\meas_N$ (resp. $\meas$) be the $2\times2$ matrix valued spectral measure of $\bbJ_N$ (resp. $\bbJ$). By \eqref{eq.even.moments} and \eqref{eq.odd.moments} and by the weak convergence $(\nu_N,\psi_N)\to(\nu,\psi)$, the moments of $\meas_N$ converge to the corresponding moments of $\meas$:
$$
\int_{-\infty}^\infty s^m\dd\meas_N(s)\to\int_{-\infty}^\infty s^m\dd\meas(s), \quad N\to\infty, 
$$
for all $m\geq0$. By \eqref{eq.moment.ident}, it follows that 
\begin{equation}
\bbP_0 \bbJ_N^m \bbP_0^*\to \bbP_0 \bbJ^m \bbP_0^*, \quad N\to\infty,
\label{e7}
\end{equation}
for all $m\geq0$. 

The rest of the proof proceeds by induction. 
Since
$$
\bbP_0\bbJ\bbP_0^*=B_0
\quad\text{ and }\quad
\bbP_0\bbJ^2\bbP_0^*=B_0^2+A_0A_0^*,
$$
using \eqref{e7} with $m=1$ and $m=2$, we find that $b_0(J_N)\to b_0(J)$ and $a_0(J_N)\to a_0(J)$ as $N\to\infty$. 

Now suppose we have already established that 
$$
b_\ell(J_N)\to b_\ell(J), \quad a_\ell(J_N)\to a_\ell(J), \quad N\to\infty,
$$
for all $\ell=0,\dots,n-1$; let us prove this for $\ell=n$. We will rely on Lemma~\ref{lem.moments}. Denote by $A_n(\bbJ)$, $B_n(\bbJ)$ the Jacobi parameters of a block Jacobi matrix $\bbJ$. Our inductive assumption means that 
$$
B_\ell(\bbJ_N)\to B_\ell(\bbJ), \quad A_\ell(\bbJ_N)\to A_\ell(\bbJ), \quad N\to\infty,
$$
for all $\ell=0,\dots,n-1$. For the remainder terms from Lemma~\ref{lem.moments}, the induction hypothesis implies $\mathcal{Y}_{2n+1}(\bbJ_{N})\to\mathcal{Y}_{2n+1}(\bbJ)$ as $N\to\infty$ and so, bearing~\eqref{e7} in mind, the matrix
$$
 A_{0}(\bbJ_{N})\cdots A_{n-1}(\bbJ_{N})B_{n}(\bbJ_{N})A_{n-1}^{*}(\bbJ_{N})\cdots A_{0}^{*}(\bbJ_{N})
$$
converges to the matrix
$$
 A_{0}(\bbJ)\cdots A_{n-1}(\bbJ)B_{n}(\bbJ)A_{n-1}^{*}(\bbJ)\cdots A_{0}^{*}(\bbJ)
$$
for $N\to\infty$. Taking the induction hypothesis into account, we see that 
$$
A_{0}(\bbJ)\cdots A_{n-1}(\bbJ)B_{n}(\bbJ_{N})A_{n-1}^{*}(\bbJ)\cdots A_{0}^{*}(\bbJ)
$$
converges to 
$$
A_{0}(\bbJ)\cdots A_{n-1}(\bbJ)B_{n}(\bbJ)A_{n-1}^{*}(\bbJ)\cdots A_{0}^{*}(\bbJ)
$$
as $N\to\infty$. Since all matrices $A_{\ell}(\bbJ)$ are non-singular, we conclude that $B_n(\bbJ_N)\to B_{n}(\bbJ)$, which means that $b_n(J_N)\to b_n(J)$ as $N\to\infty$.

Similarly, using Lemma~\ref{lem.moments} with $m=2n+2$ in~\eqref{e7}, we find that $\mathcal{Y}_{2n+2}(\bbJ_{N})\to\mathcal{Y}_{2n+2}(\bbJ)$ and
$$
A_{0}(\bbJ_{N})\cdots A_{n}(\bbJ_{N})A_{n}^{*}(\bbJ_{N})\cdots A_{0}^{*}(\bbJ_{N})\to
A_{0}(\bbJ)\cdots A_{n}(\bbJ)A_{n}^{*}(\bbJ)\cdots A_{0}^{*}(\bbJ)
$$
as $N\to\infty$. From here, we deduce that 
$$
A_{n}(\bbJ_{N})A_{n}^{*}(\bbJ_{N})\to A_{n}(\bbJ)A_{n}^{*}(\bbJ)
$$
and so $a_n(J_N)\to a_n(J)$ as $N\to\infty$.
\end{proof}

We come back to the proof of Theorem~\ref{thm.a-homeo}.
By the lemma, we have 
$$
\norm{J_N\delta_j-J\delta_j}\to0, \quad N\to\infty,
$$
for every $j$. It follows that 
$$
\norm{J_N h-Jh}\to0, \quad N\to\infty
$$
for a dense set of elements $h\in\ell^2(\bbN_0)$. Finally, by the definition of the weak convergence of spectral data, the supports of $\nu_N$ are uniformly bounded, and therefore, by \eqref{e3}, the norms of $J_N$ are also uniformly bounded. It follows that $J_N\to J$ strongly as $N\to\infty$. 
The proof of Theorem~\ref{thm.a-homeo} is complete. \qed

\subsection{Proof of Theorem~\ref{thm.a3a}(i)}
We will use Proposition~\ref{prop.psi.strong.def}. 
First suppose that the spectrum of $J^*J$ is simple. Then by Theorem~\ref{thm.a1}, the element $\delta_0$ is cyclic for $\abs{J}$. 
It follows that the subspace $\calH_0$ coincides with the whole of $\ell^2(\bbN_0)$. By complex conjugation, the same is true for $\overline{\calH_0}$. Thus, the projection $\overline{\calP_0}$ in the left hand side of \eqref{eq.strong.psi} reduces to the identity operator. Now let us compute the norms of both sides of \eqref{eq.strong.psi} with $f=1$. We have
$$
\norm{J\delta_0}^2
=\jap{J^*J\delta_0,\delta_0}
=\int_0^\infty s^2\dd\nu(s)
$$
for the left hand side and similarly 
$$
\norm{\abs{J^*}\psi(\abs{J^*})\delta_0}^2
=
\int_0^\infty s^2\abs{\psi(s)}^2\dd\nu(s)
$$
for the right hand side. It follows that 
$$
\int_0^\infty s^2(1-\abs{\psi(s)}^2)\dd\nu(s)=0.
$$
Since the integrand is non-negative, we find that  $\abs{\psi(s)}=1$ for $\nu$-a.e. $s>0$. 
 
Conversely, suppose that  $\abs{\psi(s)}=1$ for $\nu$-a.e. $s>0$.
Again, consider the norms in \eqref{eq.strong.psi} for $f=1$. As above, we find
$$
\norm{\overline{\calP_0}J\delta_0}^2
=
\norm{\abs{J^*}\psi(\abs{J^*})\delta_0}^2 
=
\int_0^\infty s^2\dd\nu(s),
$$
and on the other hand, 
$$
\norm{J\delta_0}^2=\jap{J^*J\delta_0,\delta_0}=\int_0^\infty s^2\dd\nu(s).
$$
We conclude that 
$$
\norm{\overline{\calP_0}J\delta_0}=\norm{J\delta_0}
$$
and therefore, since $\overline{\calP_0}$ is an orthogonal projection, we find $J\delta_0\in\overline{\calH_0}$. By complex conjugation, we also have $J^*\delta_0\in\calH_0$.  By Lemma~\ref{lma.b1}, we conclude that $\calH_0=\ell^2(\bbN_0)$ and so $\delta_0$ is cyclic for $\abs{J}$. The proof is complete.

\subsection{Proof of Theorem~\ref{thm.a3a}(ii)}
We recall that the relation \eqref{a13b}:
\begin{equation}
\jap{J(J^{*}J)^n\delta_0,\delta_0}=\int_0^\infty s^{2n+1}\psi(s)\dd\nu(s)
\label{e1}
\end{equation}
for all $n\geq0$. Now suppose $J=J^*$; then the left hand side here is real, and therefore  
$$
\int_0^\infty s^{2n+1}\Im(\psi(s))\dd\nu(s)=0, \quad n\geq0.
$$
Taking linear combinations, from here we obtain that
$$
\int_0^\infty sg(s^2)\Im(\psi(s))\dd\nu(s)=0
$$
for all polynomials $g$ and by the Weierstrass approximation theorem, this is also true for all continuous functions $g$. 
From here we easily obtain that $\Im \psi(s)=0$ for $\nu$-a.e. $s>0$.

Conversely, suppose $\psi(s)$ is real for $\nu$-a.e. $s>0$. Then, taking into account formulas~\eqref{eq.even.moments}, \eqref{eq.odd.moments}, and \eqref{eq.moment.ident}, we see that the matrix $\mathbb{P}_{0}\mathbb{J}^{m}\mathbb{P}_{0}^{*}$ has real entries for all $m\geq0$. Now, with the aid of Lemma~\ref{lem.moments}, one easily proves by induction that $B_{j}$ is real for all $j\geq0$. It follows that $b_{j}\in\bbR$ for all $j\geq0$ and so $J$ is self-adjoint.

\subsection{Proof of Theorem~\ref{thm.a3a}(iii)}
Suppose $b_j=0$ for all $j\geq0$. Let $\Omega$ be the unitary operator of multiplication by the sequence $(-1)^j$ in $\ell^2(\bbN_0)$; in other words, $\Omega$ is the diagonal matrix with entries $1,-1,1,-1,\dots$ on the diagonal. Observe that our assumption $b_j=0$ implies the identity 
$$
\Omega J\Omega=-J. 
$$
Also notice that under our assumption $b_j=0$, we have $J=J^*$. Now for $n\geq0$ write
\begin{align*}
\jap{J^{2n+1}\delta_0,\delta_0}
&=
\jap{J^{2n+1}\Omega\delta_0,\Omega\delta_0}
=
\jap{\Omega J^{2n+1}\Omega\delta_0,\delta_0}
=
\jap{(\Omega J\Omega)^{2n+1}\delta_0,\delta_0}
\\
&=
\jap{(-J)^{2n+1}\delta_0,\delta_0}
=
-\jap{J^{2n+1}\delta_0,\delta_0}
\end{align*}
and so we conclude that $\jap{J^{2n+1}\delta_0,\delta_0}=0$ for all $n\geq0$. 
Recalling \eqref{e1}, we find that 
$$
\int_0^\infty s^{2n+1}\psi(s)\dd\nu(s)=0
$$
for all $n\geq0$. Similarly to the proof of the previous part, from here it is easy to conclude that $\psi(s)=0$ for $\nu$-a.e. $s>0$. 
 
Conversely, suppose $\psi(s)=0$ for $\nu$-a.e. $s>0$. By the previous part of the theorem, $J$ is self-adjoint, and by \eqref{e1} we obtain 
\begin{equation}
\jap{J^{2n+1}\delta_0,\delta_0}=0, \quad n\geq0. 
\label{e8}
\end{equation}
One can prove directly that this implies that $b_j=0$ for all $j\geq0$ (for example, $b_0=0$ follows directly from this identity with $n=0$). However, we prefer again to refer to Lemma~\ref{lem.moments}. By \eqref{cc3}, condition \eqref{e8} is equivalent to 
\begin{equation}
\bbP_0\bbJ^{2n+1}\bbP_0^*=0, \quad n\geq0.
\label{e9}
\end{equation}
Let us prove by induction that this implies $B_j=0$ for all $j\geq0$. As already noted, $B_0=0$ follows directly from \eqref{e9} with $n=0$. Assume that $B_0=\dots=B_{n-1}=0$. Let us use identity \eqref{cc4} of Lemma~\ref{lem.moments}. Recall that (this is the last part of the statement of Lemma~\ref{lem.moments}) each product in the sum for $\mathcal Y_{2n+1}$ contains at least one term from the list $B_0,\dots,B_{n-1}$. Thus, $\mathcal Y_{2n+1}=0$. Now it follows from \eqref{cc4} and \eqref{e9} that $B_n=0$. The proof is complete. 

\subsection{Proof of Theorem~\ref{thm.a3a}(iv)}
Follows readily from parts (i) and (ii).

\subsection{Proof of Theorem~\ref{thm.a3a}(v)}
Suppose $J$ is normal and the spectrum of $\abs{J}$ is simple. Normality implies $\abs{J^*}=\abs{J}$. 
As in the proof of part (i), we find $J\delta_0=g(\abs{J^*})\delta_0=g(\abs{J})\delta_0$, where $g(s)=s\psi(s)$. 
So we conclude that 
$$
J=\abs{J}\psi(\abs{J}), 
$$
as claimed.  
The proof of Theorem~\ref{thm.a3a} is complete. \qed

\subsection{Proof of Theorem~\ref{thm.a3b}}
For self-adjoint $J$, let us combine \eqref{a.moments} with \eqref{c2} and \eqref{c3}: 
\begin{align*}
\int_{-\infty}^\infty \lambda^{2n}\dd\mu(\lambda)
&=
\jap{J^{2n}\delta_0,\delta_0}
=
\int_0^\infty s^{2n}\dd\nu(s),
\\
\int_{-\infty}^\infty \lambda^{2n+1}\dd\mu(\lambda)
&=
\jap{J^{2n+1}\delta_0,\delta_0}
=
\int_0^\infty s^{2n+1}\psi(s)\dd\nu(s).
\end{align*}
From here we find
$$
\int_{-\infty}^\infty (f_1(\lambda)+\lambda f_2(\lambda))\dd\mu(\lambda)
=
\int_0^\infty (f_1(s)+sf_2(s)\psi(s))\dd\nu(s)
$$
for any continuous even functions $f_1$ and $f_2$. The required statement easily follows from here. \qed

\subsection{Proof of Theorem~\ref{thm.og}}
For each $j\geq0$, define the $2\times 2$ matrix polynomial
$$
  P_{j}:=\begin{pmatrix}
  q_{j}^{\rm{e}} & \overline{q_{j}^{\rm{o}}} \\[2pt]
  q_{j}^{\rm{o}} & \overline{q_{j}^{\rm{e}}}
  \end{pmatrix},
$$
where $q_{j}^{\rm{e}}$ and $q_{j}^{\rm{o}}$ denote the even and odd part of $q_{j}$, respectively.
The normalization $q_{0}=1$ implies that $P_{0}=I$. Using~\eqref{eq.q.recur}, one readily checks that 
$$
 sP_{j}(s)=P_{j-1}(s)A_{j-1}+P_{j}(s)B_{j}+P_{j+1}(s)A_{j},
$$
for all $j\geq0$, with matrices $A_{j}$ and $B_{j}$ as in~\eqref{cc1} and the convention $A_{-1}:=I$, $P_{-1}:=0$.  This is the three-term recurrence relation for the right matrix orthogonal polynomials corresponding to the block Jacobi matrix~\eqref{c.defblockJ}; see~\cite[Eq.~2.30]{dam-pus-sim_sat08} ($P_{j}$ coincides with $p_{j}^{R}$ in the notation of~\cite{dam-pus-sim_sat08}). Consequently, polynomials $P_{j}$ are orthonormal with respect to the spectral measure $\meas$ of $\mathbb{J}$, i.e.
\begin{equation}
 \int_{-\infty}^{\infty}P_{j}^{*}(s)\dd\meas(s)P_{k}(s)=\delta_{j,k}I, \quad j,k\geq0,
\label{eq.matrix.og.rel}
\end{equation}
see~\cite{dam-pus-sim_sat08} for details.

We know from~\eqref{d3} and~\eqref{d4} the relation between measure~$\meas$ and the spectral data $(\nu,\psi)$. It follows that we may rewrite the left-hand side of~\eqref{eq.matrix.og.rel} as
$$
\int_{0}^{\infty}\frac{1}{2}\left[P_{j}^{*}(s)\begin{pmatrix} 1 & \psi(s) \\ \overline{\psi(s)} & 1 \end{pmatrix} P_{k}(s)+P_{j}^{*}(-s)\begin{pmatrix} 1 & -\psi(s) \\ -\overline{\psi(s)} & 1 \end{pmatrix} P_{k}(-s)\right]\dd\nu(s).
$$
Using also the definition of $P_{j}$, a straightforward computation shows that the integrand equals
the matrix
$$
 \begin{pmatrix}
  Q_{j,k}(s) & 0 \\
  0 & Q_{k,j}(s)
 \end{pmatrix},
$$
where 
\begin{align*}
Q_{k,j}(s):=&\,\frac{1}{2}\left[q_{j}(s)\overline{q}_{k}(s)+q_{j}(-s)\overline{q}_{k}(-s)\right]\\
 &+\frac{1}{4}\left[\psi(s)+\overline{\psi(s)}\right]\left[q_{j}(s)\overline{q}_{k}(s)-q_{j}(-s)\overline{q}_{k}(-s)\right]\\
 &+\frac{1}{4}\left[\psi(s)-\overline{\psi(s)}\right]\left[q_{j}(s)\overline{q}_{k}(-s)-q_{j}(-s)\overline{q}_{k}(s)\right].
\end{align*}
Hence the matrix orthogonality relation~\eqref{eq.matrix.og.rel} reduces to the scalar form
$$
 \int_{0}^{\infty}Q_{k,j}(s)\dd\nu(s)=\delta_{j,k}, \quad j,k\geq0,
$$
which, when expressed in a vector form, yields the first orthogonality relation from the claim.

To deduce the second orthogonality relation, it suffices to introduce the unitary matrix 
$$
 U:=\frac{1}{\sqrt{2}}
 \begin{pmatrix}
 1 & -1 \\ 1 & 1 
 \end{pmatrix}
$$
and apply the identities 
$$
 U\begin{pmatrix}
 q_{j}(s) \\ q_{j}(-s)
 \end{pmatrix}
 =\sqrt{2}
 \begin{pmatrix}
 q_{j}^{\rm{o}}(s) \\ q_{j}^{\rm{e}}(s)
 \end{pmatrix}
$$
and
$$
 U\begin{pmatrix}
1+\Re \psi(s)& -\ii\Im\psi(s)
\\
\ii\Im\psi(s)&1-\Re\psi(s)
\end{pmatrix}U^{*}=
\begin{pmatrix}
 1 & \overline{\psi(s)} \\ \psi(s) & 1 
 \end{pmatrix}
$$
in the already proven orthogonality relation. The proof of Theorem~\ref{thm.og} is complete. \qed

\section{Example: Jacobi matrix with constant diagonals}\label{sec.example}

To illustrate our results in a concrete example, we compute the spectral data $(\nu_{\omega},\psi_{\omega})$ of the Jacobi matrix $J_{\omega}$ with constant Jacobi parameters
$$
 a_{j}=1 \quad\mbox{ and }\quad b_{j}=\omega,
$$
where $\omega\in\bbC$.

\subsection{Symmetries $\omega\leftrightarrow-\omega$}
First, we show that 
\begin{equation}
\nu_{-\omega}=\nu_{\omega} \quad\mbox{ and }\quad \psi_{-\omega}(s)=-\psi_{\omega}(s)
\label{eq:symm}
\end{equation}
for $\nu_{\omega}$-a.e. $s>0$.

Let $\Omega$ be the unitary operator of multiplication by the sequence $(-1)^j$ in $\ell^2(\bbN_0)$.
Then $\Omega J_{-\omega}\Omega=-J_{\omega}$. It follows that $\Omega J_{\omega}^{*}J_{\omega}\Omega=J_{-\omega}^{*}J_{-\omega}$. Equivalently, $\Omega|J_{\omega}|\Omega=|J_{-\omega}|$, and therefore 
$$
 \nu_{-\omega}(\Delta)=\langle\chi_{\Delta}(|J_{-\omega}|)\delta_{0},\delta_{0}\rangle=\langle\chi_{\Delta}(|J_{\omega}|)\Omega\delta_{0},\Omega\delta_{0}\rangle=\langle\chi_{\Delta}(|J_{\omega}|)\delta_{0},\delta_{0}\rangle=\nu_{\omega}(\Delta)
$$
for any Borel set $\Delta\subset\bbR$.

Next, observe that $J_{\omega}$ is normal. Then $|J_{\omega}|=|J_{\omega}^{*}|$ and one infers from~\eqref{a10} that
\begin{align*}
\int_{0}^{\infty}s\psi_{\omega}(s)f(s)\dd\nu_{\omega}(s)
&=
\langle J_{\omega}f(|J_{\omega}|)\delta_{0},\delta_{0}\rangle
=
-\langle J_{-\omega}f(|J_{-\omega}|)\Omega\delta_{0},\Omega\delta_{0}\rangle
\\
&=
-\langle J_{-\omega}f(|J_{-\omega}|)\delta_{0},\delta_{0}\rangle
=
-\int_{0}^{\infty}s\psi_{-\omega}(s)f(s)\dd\nu_{-\omega}(s)
\end{align*}
for all $\omega\in\bbC$ and continuous functions $f$. Since we already know that $\nu_{-\omega}=\nu_{\omega}$ we conclude $\psi_{-\omega}(s)=-\psi_{\omega}(s)$ for $\nu_{\omega}$-a.e. $s>0$.

\subsection{The spectral measure of $\bbJ_{\omega}$}
Let $\bbJ_{\omega}$ be the block Jacobi matrix~\eqref{c.defblockJ} with constant (i.e. $j$-independent) Jacobi parameters
$$
 A=\begin{pmatrix}0 & 1 \\ 1 & 0\end{pmatrix} \quad\mbox{ and }\quad B=\begin{pmatrix}0 & \omega \\ \overline{\omega} & 0\end{pmatrix}.
$$
In order to deduce explicit formulas for $\nu_{\omega}$ and $\psi_{\omega}$, we make use of the spectral measure $\meas_{\omega}$ of $\bbJ_{\omega}$. The latter is known to be expressible as the integral
$$
 \meas_{\omega}(\Delta)=\frac{1}{2\pi}\int_{-2}^{2}\sqrt{4-t^{2}}\,\chi_{\Delta}(At+B)\dd t,
$$
for any Borel set $\Delta\subset\bbR$, see~\cite[Theorem~3]{zyg_jpa01}. Diagonalizing the matrix
$$
 At+B=U(t)\begin{pmatrix} |t+\omega| & 0 \\ 0 & -|t+\omega| \end{pmatrix}U^{*}(t),
$$
with the unitary matrix
$$
 U(t)=\frac{1}{\sqrt{2}\,|t+\omega|}\begin{pmatrix}t+\omega & |t+\omega| \\ |t+\omega| & -t-\overline{\omega} \end{pmatrix},
$$
we find 
$$
\int_{-\infty}^{\infty}f(s)\dd\meas_{\omega}(s)=\frac{1}{2\pi}\int_{-2}^{2}\sqrt{4-t^{2}}\,U(t)\begin{pmatrix} f(|t+\omega|) & 0 \\ 0 & f(-|t+\omega|) \end{pmatrix}U^{*}(t)\dd t
$$
for all continuous functions $f$. Alternatively, the last equality rewrites as
\begin{align}
&\int_{-\infty}^{\infty}\left(f_{1}(s)+sf_{2}(s)\right)\dd\meas_{\omega}(s)\nonumber\\
&\hskip48pt =\frac{1}{2\pi}\int_{-2}^{2}\sqrt{4-t^{2}}\,\begin{pmatrix} f_{1}(|t+\omega|) & (t+\omega)f_{2}(|t+\omega| \\ (t+\overline{\omega})f_{2}(|t+\omega|) & f_{1}(|t+\omega|) \end{pmatrix}\dd t
\label{eq:int_f_d_M}
\end{align}
for any two even continuous functions $f_{1}$ and $f_{2}$.

We compare~\eqref{eq:int_f_d_M} with~\eqref{d1}. To this end, it is natural to substitute for $s=|t+\omega|$ in the integral on the right hand side of~\eqref{eq:int_f_d_M}. To do so, we have to distinguish two cases: $|\Re\omega|>2$ and $|\Re\omega|\leq2$. Due to the symmetries~\eqref{eq:symm} we can restrict the parameter $\omega$ to the half-plane $\Re\omega\geq0$ without loss of generality. 

\subsection{Case $\Re\omega>2$}
By squaring the substitution identity $s=|\omega+t|$, we get the quadratic equation
$$
 s^{2}=t^{2}+2t\Re\omega+|\omega|^{2},
$$
whose solutions are 
\begin{equation}
 t_{\pm}(s)=-\Re\omega\pm\sqrt{s^{2}-(\Im\omega)^2}.
\label{eq:def_t_plusminus}
\end{equation}
In the following, the expression under the square root is always non-negative and the square root assumes its principal branch. The correct choice of the sign is plus when $\Re\omega>2$ since, in this case, $t_{+}$ is an increasing function which maps the interval $[|\omega-2|,|\omega+2|]$ onto $[-2,2]$.

Then a direct calculation shows that the right hand side of \eqref{eq:int_f_d_M} coincides with the integral 
$$
\frac{1}{2\pi}\int_{|\omega-2|}^{|\omega+2|}\,\begin{pmatrix} f_{1}(s) & s\psi_{\omega}(s)f_{2}(s) \\ s\overline{\psi_{\omega}(s)}f_{2}(s) & f_{1}(s) \end{pmatrix}\dd\nu_{\omega}(s)
$$
for the absolutely continuous measure $\nu_{\omega}$ with the density
$$
\frac{\dd \nu_{\omega}}{\dd s}(s)=\frac{s}{2\pi}\sqrt{\frac{4-t_{+}^{2}(s)}{s^{2}-(\Im\omega)^2}}
$$
supported on the interval $[|\omega-2|,|\omega+2|]$, and 
$$
 \psi_{\omega}(s)=\frac{t_{+}(s)+\omega}{|t_{+}(s)+\omega|}=\frac{\sqrt{s^{2}-(\Im\omega)^2}+\ii\Im \omega}{s}.
$$

\subsection{Case $\Re\omega\leq 2$}

In this case, the line segment connecting complex numbers $\omega-2$ and $\omega+2$ intersects the imaginary line. When introducing the new variable $s=|t+\omega|$ into integral on the right of~\eqref{eq:int_f_d_M}, we integrate twice within a certain interval as $t$ varies from $-2$ to $2$. More concretely, assuming $0\leq\Re\omega\leq2$, as $t$ increases from $-2$ to $-\Re\omega$, the variable $s$ decreases from $|\omega-2|$ to $|\Im\omega|$. Further, as $t$ continues increasingly from $-\Re\omega$ to $2$, the variable $s$ increases from $|\Im\omega|$ to $|\omega+2|$. As a result, the integral in~\eqref{eq:int_f_d_M} splits into two integrals and the correct sign in $t_{\pm}$, as function of $s$, see~\eqref{eq:def_t_plusminus}, has to be chosen accordingly. 

Namely, bearing in mind the restriction $\Re\omega\geq0$, we find that the right hand side of \eqref{eq:int_f_d_M} equals
\begin{align*}
&\frac{1}{2\pi}\int_{|\Im\omega|}^{|\omega-2|}\sqrt{4-t^{2}_{-}(s)}\,
\begin{pmatrix} f_{1}(s) & (t_{-}(s)+\omega)f_{2}(s) 
\\ 
(t_{-}(s)+\overline{\omega})f_{2}(s) & f_{1}(s) \end{pmatrix}
\frac{s\dd s}{\sqrt{s^{2}-(\Im\omega)^2}}
\\
+&\frac{1}{2\pi}\int_{|\Im\omega|}^{|\omega+2|}\sqrt{4-t^{2}_{+}(s)}\,
\begin{pmatrix} f_{1}(s) & (t_{+}(s)+\omega)f_{2}(s) \\ (t_{+}(s)+\overline{\omega})f_{2}(s) & f_{1}(s) \end{pmatrix}
\frac{s\dd s}{\sqrt{s^{2}-(\Im\omega)^2}}.
\end{align*}
From this expression, when compared to~\eqref{d1}, one infers that measure $\nu_{\omega}$ is absolutely continuous with the density
$$
\frac{\dd \nu_{\omega}}{\dd s}(s)=\frac{s}{2\pi\sqrt{s^{2}-(\Im\omega)^2}}\begin{cases}
\!\sqrt{4-t_{-}^{2}(s)}+\sqrt{4-t_{+}^{2}(s)},  &\mbox{for } s\in[|\Im\omega|,|\omega-2|],\\[3pt]
\!\sqrt{4-t_{+}^{2}(s)},  &\mbox{for } s\in[|\omega-2|,|\omega+2|],
\end{cases}
$$
and
$$
\psi_{\omega}(s)=\frac{1}{s}\begin{cases}
\frac{(t_{-}(s)+\omega)\sqrt{4-t_{-}^{2}(s)}+(t_{+}(s)+\omega)\sqrt{4-t_{+}^{2}(s)}}{\sqrt{4-t_{-}^{2}(s)}+\sqrt{4-t_{+}^{2}(s)}},  &\mbox{ for } s\in(|\Im\omega|,|\omega-2|],\\[3pt]
t_{+}(s)+\omega,  &\mbox{ for } s\in[|\omega-2|,|\omega+2|].
\end{cases}
$$
By this formula $\psi_{\omega}$ is determined uniquely $\nu_{\omega}$-a.e., which means a.e. in $[|\Im\omega|, |\omega+2|]$. Recall that, if $0\in\supp\nu_{\omega}$, which is the case if and only if $\Im\omega=0$, we set $\psi_{\omega}(0)=1$ by the chosen normalization (here this normalization is unimportant since $0$ is never an atom of $\nu_{\omega}$).

\subsection{Remarks on special cases}

Observe that $\psi_{\omega}$ is unimodular $\nu_{\omega}$-a.e if and only if $|\Re\omega|\geq2$, hence the spectrum of $J_{\omega}^{*}J_{\omega}$ is simple exactly for these parameters by Theorem~\ref{thm.a3a}. If $|\Re\omega|<2$, $\delta_{0}$ is not a cyclic element of $J_{\omega}^{*}J_{\omega}$. The most degenerate case occurs when $\Re\omega=0$. In this case, we have
\begin{equation}
\frac{\dd \nu_{\omega}}{\dd s}(s)=\frac{s}{\pi}\sqrt{\frac{4-s^{2}-\omega^{2}}{s^{2}+\omega^{2}}}
\quad\mbox{ and }\quad
\psi_{\omega}(s)=\frac{\omega}{s}
\label{eq.nu.psi.im.om}
\end{equation}
for $s\in[|\omega|,|\omega-2|]$. 

On the other hand, if $\Im\omega=0$, then $J_{\omega}$ is self-adjoint and we have 
$$
\frac{\dd \nu_{\omega}}{\dd s}(s)=\frac{1}{2\pi}\sqrt{4-(s-\omega)^{2}}, \quad \psi_{\omega}(s)=1, \quad s\in[\omega-2,\omega+2], 
$$
if $\omega>2$, whereas
$$
\frac{\dd \nu_{\omega}}{\dd s}(s)=\frac{1}{2\pi}\begin{cases}
\sqrt{4-(s+\omega)^{2}}+\sqrt{4-(s-\omega)^{2}},  &\mbox{ for } s\in[0,2-\omega],\\[3pt]
\sqrt{4-(s-\omega)^{2}},  &\mbox{ for } s\in[2-\omega,2+\omega],
\end{cases}
$$
and
$$
\psi_{\omega}(s)=\begin{cases}
\frac{\sqrt{4-(s-\omega)^{2}}-\sqrt{4-(s+\omega)^{2}}}{\sqrt{4-(s-\omega)^{2}}+\sqrt{4-(s+\omega)^{2}}},  &\mbox{ for } s\in(0,2-\omega],\\[3pt]
1,  &\mbox{ for } s\in[2-\omega,2+\omega],
\end{cases}
$$
if $0\leq\omega\leq 2$. Further, it is easy to use standard methods to compute the spectral measure $\mu_{\omega}$ of $J_{\omega}$ when $\omega$ is real. The spectral measure $\mu_{\omega}$ is supported on $[\omega-2,\omega+2]$ and absolutely continuous with the density
$$
 \frac{\dd\mu_{\omega}}{\dd t}(t)=\frac{1}{2\pi}\sqrt{4-(t-\omega)^{2}}.
$$
If $\omega\geq2$, then $J_{\omega}\geq0$ and we see that $\nu_{\omega}=\mu_{\omega}$ and $\psi_{\omega}\equiv 1$, as expected. Using the notation introduced in~\eqref{a.sharp}, we have 
$$
 \frac{\dd\tilde{\mu}_{\omega}}{\dd t}(t)=\frac{1}{2\pi}\sqrt{4-(t+\omega)^{2}}
$$
for $t\in[-2-\omega,2-\omega]$. Then one readily checks using the obtained formulas that $\mu_{\omega}$ is related to $\nu_{\omega},\psi_{\omega}$ as claimed by Theorem~\ref{thm.a3b} also for $0\leq\omega<2$.

It is obvious that $\psi_{\omega}\equiv0$ for $\omega=0$. On the other hand, one easily sees that $\psi_{\omega}$ is non-zero if $\Im\omega\neq0$. Then one readily deduces from the obtained formulas that $\psi_{\omega}\equiv0$ only if $\omega=0$, which is in agreement with claim~(iii) of Theorem~\ref{thm.a3a}.

\subsection{Antilinear analogues to Chebyshev polynomials}

Consider polynomials $q_{j}$ given by recurrence~\eqref{eq.q.recur} with coefficients $a_{j}=1$, $b_{j}=\omega$ and $q_{0}=1$. If $J_{\omega}=J_{\omega}^{*}$, i.e. when $\Im\omega=0$, polynomials $q_{j}$ are nothing but the Chebyshev polynomials of the second kind. More precisely, if $\Im\omega=0$, we have 
$$
 q_{j}(s)=U_{j}\!\left(\frac{s-\omega}{2}\right), \quad j\geq0,
$$
which follows readily from the recursive definition of the Chebyshev polynomials of the second kind $U_{j}$ given by the formula
\begin{equation}
 U_{j+1}(x)=2xU_{j}(x)-U_{j-1}(x), \quad j\geq1, 
\label{eq.recur.U_j}
\end{equation}
and initial conditions $U_{0}(x)=1$ and $U_{1}(x)=2x$. 

With this observation, one may believe that there exists also a relation between $q_{j}$ and Chebyshev polynomials for general $\omega\in\bbC$. This is indeed the case at least when $\omega$ is purely imaginary; the authors are not aware of a closed formula for $q_{j}$ for general $\omega\in\bbC$.

If $\Re\omega=0$, then for all $j\geq0$, we have
\begin{equation}
 g_{2j+1}(s)=(-1)^{j}(s-\omega)\,U_{j}\!\left(\frac{2-s^{2}-\omega^{2}}{2}\right)
\label{eq.q_odd_iw}
\end{equation}
and 
\begin{align}
 g_{2j}(s)=(-1)^{j}\left[\frac{2}{2-s^{2}-\omega^{2}}\,U_{j}\!\left(\frac{2-s^{2}-\omega^{2}}{2}\right)-\frac{s^{2}+\omega^{2}}{2-s^{2}-\omega^{2}}\,T_{j}\!\left(\frac{2-s^{2}-\omega^{2}}{2}\right)\right],\nonumber\\
\label{eq.q_even_iw}
\end{align}
where $T_{j}$ denotes Chebyshev polynomials of the first kind. Recall that the polynomials $T_{j}$ satisfy the same recurrence~\eqref{eq.recur.U_j} as $U_{j}$ but with initial setting $T_{0}(x)=1$ and $T_{1}(x)=x$. Identities~\eqref{eq.q_odd_iw} and~\eqref{eq.q_even_iw} can be proven by induction $j$ with the aid of well-known properties of Chebyshev polynomials; we omit the details. 

When $\Re\omega=0$, we have~\eqref{eq.nu.psi.im.om} and therefore the general orthogonality relation from Theorem~\ref{thm.og} simplifies to the following integral identities 
\begin{align*}
\frac{1}{2\pi}\int_{|\omega|}^{|\omega-2|}\bigg\{&s\big[q_{j}(s)\overline{q}_{k}(s)+q_{j}(-s)\overline{q}_{k}(-s)\big]\\
&+\omega\big[q_{j}(s)\overline{q}_{k}(-s)-q_{j}(-s)\overline{q}_{k}(s)\big]\bigg\}\sqrt{\frac{4-s^{2}-\omega^{2}}{s^{2}+\omega^{2}}}\dd s=\delta_{j,k}
\end{align*}
for polynomials~\eqref{eq.q_odd_iw}, \eqref{eq.q_even_iw} and all $j,k\geq0$.

\subsection*{Acknowledgment}
F.~{\v S}. acknowledges the support of the EXPRO grant No.~20-17749X of the Czech Science Foundation and King's College London for hospitality in 2022.
A.P. is grateful to Czech Technical University in Prague for hospitality during two visits in 2022 and 2023.


\end{document}